\documentclass[psamsfonts, reqno]{amsart}
\usepackage{amssymb,amsfonts}
\usepackage[all,arc]{xy}
\usepackage{enumerate}
\usepackage{mathrsfs}
\usepackage{pstricks}
\usepackage{pst-node}

\usepackage{times}
\usepackage[T1]{fontenc}
\usepackage{mathrsfs}
\usepackage{epsfig}
\usepackage{color}


\newtheorem{thm}{Theorem}[section]
\newtheorem{cor}[thm]{Corollary}
\newtheorem{prop}[thm]{Proposition}

\newtheorem{conj}[thm]{Conjecture}

\newtheorem{alt}{Theorem}[section]
\theoremstyle{definition}
\newtheorem{defn}[thm]{Definition}

\newtheorem{exmp}[alt]{Example}

\theoremstyle{remark}
\newtheorem{rem}[thm]{Remark}

\newcommand{\sA}{{\mathcal A}}
\newcommand{\sC}{{\mathcal C}}

\newcommand{\sG}{{\mathcal G}}
\newcommand{\sI}{{\mathcal I}}

\newcommand{\sP}{{\mathcal P}}

\newcommand{\arr}{{\alpha}}
\newcommand{\ddd}{{d}}
\newcommand{\bbb}{{b}}
\newcommand{\sss}{{s}}

\newcommand{\NN}{{\mathbb N}}
\newcommand{\QQ}{{\mathbb Q}}
\newcommand{\ZZ}{{\mathbb Z}}
\makeatletter
\makeatother
\numberwithin{equation}{section}


\newcommand{\stdnodesep}{3}
\newcommand{\doffset}{3pt}

\newcommand{\node}[3]{\rput{0}(#2){\ovalnode{#3#1}{\Large #1}}}

\newcommand{\aline}[3]{%
	\ncline[nodesepA=\stdnodesep,nodesepB=\stdnodesep]%
	{->}{#1}{#2}%
	\Aput{#3}%
}

\newcommand{\bline}[3]{%
	\ncline[nodesepA=\stdnodesep,nodesepB=\stdnodesep]%
	{->}{#1}{#2}%
	\Bput{#3}%
}

\newcommand{\dline}[4]{%
	\ncarc[nodesepA=\stdnodesep,nodesepB=\stdnodesep,offset=\doffset]%
	{->}{#1}{#2}%
	\Aput{#3}%
	\ncarc[nodesepA=\stdnodesep,nodesepB=\stdnodesep,offset=\doffset]%
	{->}{#2}{#1}%
	\Aput{#4}%
}

\newcommand{\bcircle}[3]{%
	\nccircle[angleA=#2,nodesepA=\stdnodesep]{->}{#1}{20pt}%
	\Bput{#3}%
}

\bibliographystyle{plain}

%
%

\title{Intersections of multiplicative translates of \\$3$-adic Cantor sets II: Two infinite families}

\author{William C. Abram}
\author{Artem Bolshakov}
\author{Jeffrey C. Lagarias}
\thanks{The first author received support from an NSF Graduate Research Fellowship. The 
third author received support from NSF grants  DMS-1101373 and DMS-1401224.}
\address{Department of Mathematics, Hillsdale College, Hillsdale, MI 49242-1205, USA}
\email{wabram@hillsdale.edu}
\address{College of the School of Natural Sciences and Mathematics, University of Texas \newline $\text{ }$ $\text{ }$ $\text{ }$   at Dallas, Richardson, TX 75080-3021, USA}
\email{atb130030@utdallas.edu}
\address{Department of Mathematics, University of Michigan,
Ann Arbor, MI 48109-1043,USA}
\email{lagarias@umich.edu}

\date{December 5,  2015}

\begin{document}

\begin{abstract}
This paper studies the structure of finite intersections  
of general
multiplicative translates 
${\sC}(M_1, M_2, \ldots, M_n) = \frac{1}{M_1} \Sigma_{3, \bar{2}} \cap \cdots \cap \frac{1}{M_n} \Sigma_{3, \bar{2}}$ for
integers $1 \leq M_1 < M_2 < \cdots < M_n$,
in which $\Sigma_{3, \bar{2}}$ denotes
the $3$-adic Cantor set  (of $3$-adic integers whose expansions omit  the digit $2$), which
has Hausdorff dimension $\log_3 2 \approx 0.630929$.
This study was  motivated by questions concerning the discrete dynamical system on the $3$-adic integers $\ZZ_3$ given
by multiplication by $2$.
The  exceptional set  $\mathcal{E}(\mathbb{Z}_3)$ is defined to be  the set of all elements of $\mathbb{Z}_3$
 whose forward orbits under  this action  intersect the $3$-adic Cantor set $\Sigma_{3, \bar{2}}$ 
 infinitely many times.
 It is conjectured that it has Hausdorff dimension $0$. 
 An earlier paper showed that upper bounds on the  Hausdorff dimension of the exceptional set   can be extracted from knowing 
 Hausdorff dimensions of sets of the kind above, in cases where all $M_i$
 are powers of $2$. These intersection sets were shown to be fractals whose points have $3$-adic
 expansions describable by labeled paths in a finite automaton,  whose Hausdorff dimension is exactly computable and is of
 the form $\log_3(\beta)$ where $\beta$ is a real algebraic integer. 
 It gave algorithms for  determination of the automaton, and computed examples showing that the dependence of the
 automaton and the value $\beta$ on the parameters $(M_1, \ldots, M_n)$ is complicated.
 The present paper studies two new infinite families of examples, illustrating interesting behavior of the automata
 and of the Hausdorff dimension of the associated fractals. One family  has  
 associated automata whose directed graph has  a  nested sequence of
 strongly connected components of arbitrarily large depth. 
 The second family leads to  an improved upper bound for the Hausdorff dimension of the exceptional set
 $\mathcal{E}(\mathbb{Z}_3)$ of $\log_3 \phi \approx 0.438018$, where $\phi$ denotes  the Golden ratio.

\end{abstract}

\maketitle

 \tableofcontents

 

%
%
%
\section{Introduction}

Let the   $3$-adic Cantor set
 $\Sigma_3 := \Sigma_{3, \bar{2}}$ be the subset of all
$3$-adic integers whose $3$-adic expansions consist of digits $0$ and $1$ only. 
This set is a well-known fractal having Hausdorff dimension $\dim_{H}(\Sigma_{3}) = \log_3 2 \approx 0.630929$.
By a {\em multiplicative translate} of such a Cantor set we mean 
a multiplicatively rescaled set $r \Sigma_{3}= \{ r x: x\in \Sigma_3\}$,
where we restrict to  $r= \frac{p}{q} \in \QQ^{\times}$ being a rational number  that is $3$-integral, 
meaning that $r \in \ZZ_3$, or equivalently $ord_3(r) \ge 0$.
For example the  multiplicative translate $\Sigma_{3, \bar{1}}= 2 \Sigma_{3, \bar{2}}$,
which allows only  $3$-adic digits $0$ and $2$,  has the symbol structure of its digits
matching that of  ternary expansions of   the usual middle-third
Cantor set on $[0,1]$.

This paper considers sets
 given as finite intersections of such  multiplicative translates:
\begin{equation}\label{eq100}
 \sC( r_1, r_2, \cdots, r_N) := \bigcap _{i=1}^{N} \frac{1}{r_i} \Sigma_3.
\end{equation}
These sets
are fractals and this paper considers the problems of 
determining  their internal structure and of obtaining bounds on  their Hausdorff dimension.
The dependence
 of the Hausdorff dimension of the sets $\sC(r_1,\ldots, r_n)$ on the parameters $(r_1, r_2,\ldots, r_n)$ 
turns out to be  complicated and fascinating.


In  Part I  \cite{AL14c},
two of the authors presented  a method for exactly computing the Hausdorff dimension
of individual sets $\sC(r_1, \ldots, r_n)$.  
This method is suited for computer experimentation.
The method is based on the fact  
 all such sets  have a special property:
 the $3$-adic expansions of members of such a set are characterizable
by the set of all infinite paths in  a fixed labeled directed graph (finite automaton)
that emanate from a fixed initial vertex, where the edge labels are $3$-adic digits.
We term sets of this kind, characterized by a finite automaton,  {\em $3$-adic path set fractals}.
Two of the authors studied    the $p$-adic version of this concept  in \cite{AL14b},
and showed their Hausdorff dimensions are explicitly computable in terms of properties
of the associated finite automaton. 
$p$-adic path set fractals in turn are geometric realizations of objects in symbolic dynamics called {\em path sets}. Forgetting the geometric data associated to a $p$-adic path set fractal $Y$, that is, thinking of the $3$-adic digits as an alphabet with no additional structure, recovers an underlying path set $X$ which is the set of all infinite strings of digits from $\{0,1,\ldots, p-1\}$ corresponding to elements of $Y$.
 The path set underlying the $3$-adic path set fractal $\sC(r_1,\ldots, r_n)$ is denoted $X(r_1, \ldots, r_n)$,
 and will play a role in the results of this paper.  The papers \cite{AL14b}, \cite{AL14c}
 gave between them  algorithms to effectively compute $X(r_1, \ldots, r_n)$
 when given $(r_1, r_2, ..., r_n)$. 
Section \ref{sec200}  reviews basic results  on path sets and $p$-adic path set fractals;
a general theory of path sets was previously developed by two of the authors in \cite{AL14a}.

This paper is concerned with  the case $\sC(1, M)$ for $M$ a positive  integer.
The Hausdorff dimension $\dim_{H}(\sC(1, M))$ has a clear dependence on certain simple properties
of the ternary expansion  $(M)_3$ of $M$. For  
 example Part I observed:
 \begin{enumerate}
 \item[(i)]
  $\dim_{H}(C(1, M)) =0$ whenever the last ternary digit of $(M)_3$ is a $2$, i.e.
$M \equiv 2 \, (\bmod \, 3)$.
\item[(ii)]
 $\dim_{H}(C(1, 3M)) = \dim_{H}(\sC(1, M)).$  In consequence,  all trailing zeros in
the base $3$ expansion of $M$ may be cancelled off without changing the
Hausdorff dimension.
\end{enumerate} 
However the dependence on $M$ seems anything but simple when examined more closely.
It appears that arithmetic properties of $M$  influence both the structure
of the underlying automata and the Hausdorff dimension in extremely complex ways.
Part I treated in detail two  infinite families of $M$ whose ternary expansion $(M)_3$ had a
particularly 
simple form, where an exact answer for the Hausdorff dimension could be obtained.
\begin{enumerate}
 \item
 $M = L_k = (1^k)_3,$ that is $L_k = \frac{1}{2}( 3^k -1)$.
 It  obtained a  Hausdorff dimension formula for each $k \ge 1$
 and deduced that $\dim_{H}(C(1, L_k)) \to 0$ as $n \to \infty$ (\cite[Theorem 5.2]{AL14c}).
 \item
 $M=N_k = (10^{k-1}1)_3$, that is $N_k= 3^k+1$.
 It showed for each $k \ge 1$ that $\dim_{H}(\sC(1, N_k)) = \log_3 \phi \approx 0.438018,$
 where $\phi=\frac{1+ \sqrt{5}}{2}$  (\cite[Theorem 5.5]{AL14c}).
\end{enumerate} 
The automata associated to the second of these families displayed considerable
complexity. The automaton associated to $N_k$ had a number of states 
growing exponentially with $k$ and was strongly connected; it is remarkable that
its Perron eigenvalue could be computed exactly. 
Salient facts  on these families are collected in  Appendix  A   (Section \ref{secA0})
for easy reference.


This paper continues the study of the  sets ${\sC}(1, M)$
for various integers $M \ge 1$. 
We obtain results for two new infinite families of $M$ having  ternary expansions $(M)_3$ of
a regular form, $P_k=2 \cdot 3^k +1= (20^{k-1} 1)_3$ and $Q_k= 3^{2k}-3^k +1= (2^k0^{k-1}1)_3$; they are stated in Section \ref{sec:results}.  
When compared to the families treated in Part I, these  families reveal additional complexity in
the structure of the associated automata and the behavior of the
Hausdorff dimension. In particular the automata associated to  one of these families  are not
strongly connected; they are reducible and have
 arbitrarily large numbers of strongly connected components.
We  bound the  Hausdorff dimension of such $\sC(1, M)$ through estimation  of the Perron eigenvalue
 of the adjacency matrix of these automata.  To estimate the  Hausdorff dimension of one family, we make use of 
 an operation on path sets termed   {\em interleaving}, that we introduce in Section \ref{sec34}. 
 The structure of the automata was first guessed from computer experiments
and then proved. In addition to studying these two families the  paper   presents  
 further results from computer experiments to test the relation
 of Hausdorff dimension to particular patterns in the ternary
 expansion of  $M$.

The  original motivation for
studying questions of this kind  arose from 
 a problem of Erd\H{o}s \cite{Erd79}.
 This problem was generalized to a question over  the $3$-adic integers by 
 the third author  (\cite{Lag09}), who proposed a weaker version
 of the Erd\H{o}s problem, the 
{\em Exceptional set conjecture}, explained below,
which asserts that a certain set has Hausdorff dimension $0$.
The results of this paper yield new
information about the Exceptional set  conjecture without resolving it,
see Section \ref{sec12b}.

%
%
%
\subsection{Exceptional set conjecture and nesting constants }\label{sec11}

Erd\H os \cite{Erd79} conjectured that for every $n \geq 9$, the ternary expansion of $2^n$ does not omit the digit $2$. 
A weak version of this conjecture asserts that there are only finitely many $n$ such that the ternary expansion of $2^n$ does not omit the digit $2$. 
Both versions of this  conjecture are  open and appear difficult.

In \cite{Lag09} the third author proposed a $3$-adic generalization of this problem, as follows.
Let $\mathbb{Z}_3$ denote the $3$-adic integers, and let a $3$-adic integer $\alpha$ have 
$3$-adic expansion
\[
(\alpha)_3 := a_0 + a_1 \cdot 3 + a_2 \cdot 3^2 + \cdots ,       ~~\mbox{with all}~~ a_i \in \{ 0, 1, 2\}.
\]
It introduced the following notion.
\begin{defn} \label{defn-exceptional}
The  {\em $3$-adic exceptional set}  $\mathcal{E}(\mathbb{Z}_3)$ is given by
\[
\mathcal{E}(\mathbb{Z}_3) := 
\{\lambda \in \mathbb{Z}_3 : \text{for infinitely many $n \ge 0$ the expansion $(2^n \lambda)_3$ omits the digit $2$}\}.
\]
\end{defn}
This definition is less stringent  than the Erd\H{o}s problem in allowing
variation of  the new parameter $\lambda$.
 The weak version  of Erd\H os's conjecture above 
  is equivalent to the assertion that $1 \notin \mathcal{E}(\mathbb{Z}_3)$. 
  
 That paper proposed the following conjecture \cite[Conjecture 1.7]{Lag09}.

\begin{conj}\label{cj11}{\em (Exceptional Set Conjecture)}
The $3$-adic exceptional set $\mathcal{E}(\mathbb{Z}_3)$ 
has Hausdorff dimension zero, i.e. 
\begin{equation}
\dim_{H}(\mathcal{E}(\mathbb{Z}_3) )=0.
\end{equation}
\end{conj}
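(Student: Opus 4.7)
The plan is to bound $\dim_H(\mathcal{E}(\ZZ_3))$ using the $k$-fold intersection framework developed in Part I and extended here. Rewrite the exceptional set as the limsup
\[
\mathcal{E}(\ZZ_3) \;=\; \bigcap_{N \geq 0} \bigcup_{n \geq N} \tfrac{1}{2^n}\Sigma_3.
\]
If $\lambda \in \mathcal{E}(\ZZ_3)$, then $\lambda \in \tfrac{1}{2^n}\Sigma_3$ for infinitely many $n$, so for every fixed $k$ there exist indices $0 \leq n_1 < n_2 < \cdots < n_k$ with $\lambda \in \sC(2^{n_1}, \ldots, 2^{n_k})$. Hence
\[
\mathcal{E}(\ZZ_3) \;\subseteq\; \bigcup_{0 \leq n_1 < \cdots < n_k} \sC(2^{n_1}, 2^{n_2}, \ldots, 2^{n_k}).
\]
Since multiplication by the $3$-adic unit $2^{n_1}$ is an isometry of $\ZZ_3$, each set in the union has the same Hausdorff dimension as some $\sC(1, 2^{m_2}, \ldots, 2^{m_k})$ with $0 < m_2 < \cdots < m_k$. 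Countable stability of Hausdorff dimension then gives
\[
\dim_H(\mathcal{E}(\ZZ_3)) \;\leq\; d_k \;:=\; \sup_{0 < m_2 < \cdots < m_k}\dim_H\bigl(\sC(1, 2^{m_2}, \ldots, 2^{m_k})\bigr),
\]
so $\dim_H(\mathcal{E}(\ZZ_3)) \leq \inf_{k \geq 1} d_k$, and the conjecture reduces to showing $d_k \to 0$ as $k \to \infty$.

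The second step invokes the path set / automaton representation of $\sC(1, 2^{m_2}, \ldots, 2^{m_k})$ from Section \ref{sec200}: its Hausdorff dimension equals $\log_3 \beta(m_2, \ldots, m_k)$, where $\beta \geq 1$ is the Perron eigenvalue of the adjacency matrix of the associated automaton. Showing $d_k \to 0$ is therefore equivalent to the uniform spectral statement: for every $\epsilon > 0$ there exists $k_0$ such that every automaton arising from a strictly increasing tuple $(m_2, \ldots, m_{k_0})$ has Perron eigenvalue at most $1 + \epsilon$.

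A natural route is to combine two tools already present in the paper: observation (i) from the introduction, which says $\dim_H(\sC(1, M)) = 0$ whenever $M \equiv 2 \pmod 3$, and the interleaving operation introduced in Section \ref{sec34}. Since $2 \equiv -1 \pmod 3$, the exponents producing $2^m \equiv 2 \pmod 3$ are precisely the odd $m$, which appear in any sufficiently long tuple. The naive inclusion $\sC(1, 2^{m_i}, 2^{m_j}) \subseteq \sC(1, 2^{m_j})$ yields only the vacuous bound, so a more refined strategy is needed: find, within any long tuple, a subtuple along which the automata factor (via interleaving or an analogous decomposition) through one of the known low-dimension families, then propagate this collapse to the full automaton. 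Probabilistic heuristics on $3$-adic digit constraints suggest that imposing $k$ independent conditions coming from distinct powers of $2$ should reduce the entropy at a geometric rate in $k$.

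The main obstacle is precisely this uniform spectral control over an adversarial sequence. The computer experiments of Part I and the two families studied in this paper reveal that $\dim_H(\sC(1, M))$ depends extremely irregularly on $M$, and that the associated automata can be reducible with exponentially many states and nested strongly connected components (as the first family of this paper illustrates). Ruling out the existence of an adversarial infinite sequence $(m_i)$ along which $\beta(m_2, \ldots, m_k)$ stays bounded away from $1$ is essentially the full content of the conjecture; no currently available technique produces such a uniform bound. I expect that any successful attack must either establish a monotonicity or subadditivity principle for $\log \beta$ under adjoining a new intersection factor, or replace the automaton-by-automaton analysis by a single global harmonic or ergodic argument about the $\times 2$ action on $\ZZ_3$.
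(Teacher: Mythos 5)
You have not given a proof, and to be clear, the paper does not contain one either: the statement you were assigned is the Exceptional Set Conjecture, which the paper states as an open conjecture (attributed to \cite{Lag09}) and does not resolve. Your first two steps are correct and are exactly the paper's own framework: the containment $\mathcal{E}(\mathbb{Z}_3) \subseteq \bigcup_{0 \le n_1 < \cdots < n_k} \sC(2^{n_1},\ldots,2^{n_k})$ is the paper's $\mathcal{E}^{(k)}(\mathbb{Z}_3)$, multiplication by the unit $2^{n_1}$ is indeed a $3$-adic isometry reducing to $\sC(1,2^{m_2},\ldots,2^{m_k})$, and countable stability gives $\dim_H(\mathcal{E}(\mathbb{Z}_3)) \le \Gamma = \lim_k \dim_H(\mathcal{E}^{(k)}(\mathbb{Z}_3))$. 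But the entire content of the conjecture is the claim $\Gamma = 0$ (your ``$d_k \to 0$''), which you do not establish and which the paper explicitly says is an unanswered question. So the proposal is a correct reduction followed by an acknowledged gap, not a proof; as you yourself note, the missing uniform spectral bound ``is essentially the full content of the conjecture.''

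Moreover, the specific avenues you sketch for closing the gap are ones the paper's results show cannot work by themselves. Replacing the dyadic tuples by arbitrary integer tuples gives the generalized nesting constant $\Gamma_{\star}$, which Part I proved satisfies $\Gamma_{\star} \ge \frac{1}{2}\log_3\phi > 0$, so no argument that only uses the relaxed sets $\sC(1,M_1,\ldots,M_{k-1})$ can force the dimension to zero. Similarly, the hope that ``imposing $k$ independent conditions reduces entropy at a geometric rate'' is refuted by the family $Q_k$ of this very paper: $d_3(Q_k) \to \infty$ yet $\dim_H(\sC(1,Q_k)) = \log_3\phi$ for all $k$, which is why $\Gamma_{\star\star} = \log_3\phi$ and why the digit-count statistic cannot drive the dimension down. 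The observation that $\dim_H(\sC(1,M)) = 0$ for $M \equiv 2 \pmod 3$ also does not help here, since one is free to restrict to even exponents, where $2^{m} \equiv 1 \pmod 3$. What the machinery of the paper actually yields is only the improved upper bound $\dim_H(\mathcal{E}(\mathbb{Z}_3)) \le \Gamma \le \Gamma_{\star\star} = \log_3\phi \approx 0.438$, via Theorems \ref{th1N1} and \ref{th106a}; the conjecture itself remains open, and your write-up should present the reduction as a bound, not as progress toward a proof of dimension zero.
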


Clearly $0 \in \mathcal{E}(\mathbb{Z}_3)$,  and our state of ignorance is such 
that we do not know whether  $\mathcal{E}(\mathbb{Z}_3)=\{0\}$ or not.
In \cite{Lag09} the Exceptional Set Conjecture was approached by introducing the sets
\begin{equation}\label{eq120}
\mathcal{E}^{(k)}(\mathbb{Z}_3) := 
\{\lambda \in \mathbb{Z}_3 : \text{at least $k$ values of $(2^n \lambda)_3$ omit the digit 2}\},
\end{equation}
which yield the containment relation
\begin{equation}\label{1013}
 \mathcal{E}(\mathbb{Z}_3) \subseteq \bigcap_{k=1}^{\infty} \mathcal{E}^{(k)}(\mathbb{Z}_3).
\end{equation} 
That paper obtained the upper bound
$$
\dim_{H}( \mathcal{E}(\mathbb{Z}_3)) \le \dim_{H}(\mathcal{E}^{(2)}(\mathbb{Z}_3)) \le \frac{1}{2}.
$$
The sets $\mathcal{E}^{(k)}(\mathbb{Z}_3)$
form a nested family 
 \[
 \Sigma_{3, \bar{2}}=  \mathcal{E}^{(1)}(\mathbb{Z}_3 ) \supseteq \mathcal{E}^{(2)}(\mathbb{Z}_3) \supseteq 
  \mathcal{E}^{(3)}(\mathbb{Z}_3 ) \supseteq \cdots,
 \]
 and  are themselves expressed in terms of intersection sets \eqref{eq100} as  
\begin{equation}
 \mathcal{E}^{(k)}(\mathbb{Z}_3) = \bigcup_{0 \leq m_1 < \ldots < m_k} {\sC}(2^{m_1},\ldots,2^{m_k}).
\end{equation}
This connection motivated the  study made in \cite{AL14c} of the more general sets $C(M_1, ..., M_k)$.

\begin{defn}
The {\em (dyadic) nesting constant}
$\Gamma$  is given by
\begin{equation}\label{Gamma}
 \Gamma := \lim_{k \to \infty} \dim_{H}(\mathcal{E}^{(k)}(\mathbb{Z}_3)).
\end{equation} 
\end{defn}

The containment relation (\ref{1013}) implies  that the nesting constant upper bounds
to the Hausdorff dimension of the exceptional set, 
\begin{equation}\label{1015}
\dim_{H}(\mathcal{E}(\mathbb{Z}_3))\le  \Gamma.
\end{equation}
The third author raised the question in \cite{Lag09} whether $\Gamma =0$,
which if true would imply the Exceptional Set Conjecture.
This question is currently unanswered.

Part I  \cite[Section 1.2]{AL14c}  approached the problem of obtaining improved upper bounds
for $\Gamma$ by introducing  a relaxed upper bound $\Gamma_{\star}$,
called there  the {\em generalized nesting constant}, 
obtained by replacing ${\sC}(2^{m_1},\ldots,2^{m_k})$ with ${\sC}(1, M_1, ..., M_{k-1})$
in the definition above. That paper showed
$\Gamma \le \Gamma_{\star} \le \frac{1}{2}$, and also established  the lower bound
$$
\Gamma_{\star} \ge \frac{1}{2} \log_3 \phi \approx 0.21909.
$$
It follows that  one cannot 
 resolve whether $\Gamma=0$  or not using the relaxation $\Gamma_{\star}.$

%
%
%
\subsection{Statistics of  ternary digits and $n$-digit Hausdorff dimension constant }\label{sec12b}

A focus of this work was to 
shed  light on the Exceptional set conjecture, 
by gathering  evidence  whether there might exist  simple
statistics of the ternary expansion $(M)_3$ of a single integer $M$ which will 
predict that the Hausdorff dimension $\dim_{H}(\sC(1, M))$ 
 must go to $0$ as the value of the
statistic goes to infinity. 

In this paper we resolve this question for the 
 statistic  $\ddd_3(M)$ that counts the number of
nonzero digits in the ternary expansion of
the positive integer $(M)_3$.
This value  coincides  with the number of nonzero digits in the
$3$-adic expansion of $M$; note that a $3$-adic integer $\alpha$
has a finite number of non-zero digits if and only if it is a
non-negative integer $\alpha \in \NN$.

\begin{defn}\label{def131}
The {\em $n$-digit Hausdorff dimension constant} $\arr_n$ is given by 
$$
\arr_n := \sup_{ M \ge 1} \{ \dim_{H} ({\sC}(1, M)): \mbox{The expansion $(M)_3$ has at least $n$ nonzero ternary digits} \}.
$$
\end{defn}

By definition  the $\arr_n$ form a nonincreasing sequence of nonnegative numbers,
so that the limit 
$$
\Gamma_{\star\star} := \lim_{n \to \infty} \arr_n
$$
exists. 
Known results in  number theory, detailed in Section \ref{sec6},  imply that the
number of nonzero ternary digits of $2^n$ diverges as $n$ goes to infinity. Thus, we 
obtain an upper bound on the dyadic nesting constant 
\begin{equation}\label{nest-bound}
\Gamma \le \Gamma_{\star\star} =  \lim_{n \to \infty} \arr_n =  \inf_{n} \arr_n. 
\end{equation}
One of the infinite families studied in this paper has $d_3(M_k) \to \infty$ as $k \to \infty$
and using it  we show 
\begin{equation}\label{dbl-star-bound}
\Gamma_{\star\star} = \inf_{n} \arr_n  = \log_3\left( \frac{1+ \sqrt{5}}{2}\right) \approx 0.438018.
\end{equation}
In particular by \eqref{1015} we obtain an improved upper bound for the Hausdorff dimension of 
the exceptional set
\begin{equation}
\dim_{H}( \mathcal{E}(\ZZ_3)) \le \Gamma \le \Gamma_{\star\star}  \le \log_3\left( \frac{1+ \sqrt{5}}{2}\right) \approx 0.438018.
\end{equation}
In the opposite direction \eqref{dbl-star-bound} establishes  that the statistic $\ddd_3(M)$ does not have the property
that the Hausdorff dimension must go to $0$ as the statistic $\ddd_3(M) \to \infty$. 

The final section of the paper empirically studies   the Hausdorff dimension of ${\sC}(1, M)$
with respect to two  other simple statistics of 
the ternary expansion $(M)_3$:
the block number $\bbb_3(M)$ and intermittency $\sss_3(M)$; these satisfy $\bbb_3(M) \le \sss_3(M)$.
These are defined in Section \ref{sec7}.


%
%
%

\subsection{Roadmap}\label{sec16}

Section \ref{sec:results} states the main results.
Section \ref{sec200} reviews properties of  
$p$-adic path sets and their symbolic dynamics, drawing on \cite{AL14a} and \cite{AL14b}.
Intersections of multiplicative translates of $3$-adic 
Cantor sets are a special case of these constructions. 
Section \ref{sec34} introduces an  interleaving operation on path sets and 
analyzes its effect on Hausdorff dimension.
Section \ref{sec4}  studies the sets  ${\sC}( 1, P_k)$
for the infinite family $ P_k$, analyzes the structure of their associated automata, 
 and proves Theorems ~\ref{th109}-\ref{th109c},
 and additional results. Section \ref{secqk} studies the structure of ${\sC}(1,Q_k)$ for the infinite family $Q_k$, and proves Theorems ~\ref{thqkstruct}-\ref{th111a}.
Section \ref{sec6}  deals with results on the quantities $\arr_n$
and proves Theorems ~\ref{th1N1}-\ref{th106a}. 
Section \ref{sec7} presents empirical results on Hausdorff dimensions of $C(1,M)$
for $M$ having specified statistics of their ternary expansions $(M)_3$.  \\
Appendix A (Section \ref{secA0}) describes results for two  infinite families ${\sC}( 1, L_k)$ and ${\sC}( 1, N_k)$ 
treated in Part I  \cite{AL14c}.
Appendix B (Section \ref{sec9}) relates Hausdorff dimensions of ${\sC}(1, P_k)$ to those of ${\sC}(1, L_{k+1})$.\\

\noindent{\bf Acknowledgments.} 
 We thank Yusheng Luo for an important observation on the structure of the 
automata for the sets $P_k$,
 incorporated in  Definition ~\ref{de410} and Proposition ~\ref{pr410a}. 
 W. A.  thanks the University of Michigan, where much of this work was carried out. 
W. A. and A. B.  would also like to thank Ridgeview Classical Schools, which facilitated their collaboration. W.A. was partially supported by an NSF graduate fellowship.
J. L.  was  supported by NSF grants DMS-1101373 and DMS-1401224. Some work of J.L. on the paper
was done at ICERM, where he received support from the   Clay Foundation as a Clay Senior Scholar.
He thanks ICERM for support and good working conditions.


%
%
%

\section{Results} \label{sec:results}

The main results of this paper consist of  determination of presentations of the
 $3$-adic path sets $X(1, P_k)$ and $X(1, Q_k)$ associated  to  members of two infinite families
$\sC(1, P_k)$ and $\sC(1, Q_k)$ given below, with estimates of
their Hausdorff dimensions, 
along with  experimental results for $\dim_H( {\sC}(1, M))$ for certain other $M$ 
presented in Section \ref{sec7}.

%
%
%

\subsection{The infinite family $P_k = (20^{k-1}1)_3$}\label{sec21A}

We study  the path set structure of  families of integers having 
few nonzero ternary digits. 
The only  infinite families of numbers having  exactly two nonzero ternary digits
and $\dim_{H}({\sC}(1, N))>0$ are $N_k = 3^k +1 = (1 0^{k-1}1)_3$ 
and $P_k=  (20^{k-1}1)_3= 2\cdot 3^k +1$. The family $N_k$ 
was studied in Part I and here we study the family $P_k$.

We directly compute the Hausdorff dimensions of the first few sets $\mathcal{C}(1, P_k)$
using the algorithms of Part I to be the following.\\

%
%

\begin{minipage}{\linewidth}
\begin{center}
\begin{tabular}{|c |r | r| c  | r |}
\hline
\mbox{Path Set}  & \mbox{$P_k$} & \mbox{Vertices}
&
{\mbox{Perron eigenvalue}} & {Hausdorff dim}   \\
\hline
$\mathcal{C}(1,P_1)$ & 7  & 4&   $1.618033$ &  $0.438018$ \\
$\mathcal{C}(1,P_2)$ & 19 & 8 &   $1.465571$ &  $0.347934$  \\ 
$\mathcal{C}(1,P_3)$ &   55 & 16& $1.380278$  & $0.293358$  \\ 
$\mathcal{C}(1,P_4)$ &   163 & 32 &  $1.324718$  & $ 0.255960$  \\ 
 $\mathcal{C}(1,P_5)$ &  487 & 64 &   $1.370957$  &  $0.287191$ \\    
 $\mathcal{C}(1,P_6)$ &    1459 & 128 &  $1.388728$  & $0.298913$ \\
$\mathcal{C}(1,P_7)$ &     4375 & 256 & $1.392067$  & $0.301010$ \\
$\mathcal{C}(1,P_8)$ &   13123 &  512& $1.387961$   & $0.298408$ \\ \hline 
\end{tabular} \par
\bigskip
\hskip 0.5in {\rm TABLE 2.1.} Hausdorff dimension of $\mathcal{C}(1,P_k)$ (to six decimal places)
\newline
\newline
\end{center}
\end{minipage}

The first thing to observe
from this data  is the non-monotonic behavior of the Hausdorff dimension
as a function of $k$;
the second observation is the possibility that the dimensions are bounded away from zero.
Our results below explain both these features. 
We also  observe that $\dim_{H}(\mathcal{C}(1,P_k)) = \dim_{H}(\mathcal{C}(1,L_{k+1}))$
for $1 \le k \le 4$ but equality does not hold for $k=5$.
In an Appendix B (Section \ref{sec9}) we
show that $\dim_{H}(\mathcal{C}(1,P_k)) \ge \dim_{H}(\mathcal{C}(1,L_{k+1}))$
holds in general.

Our first result determines   properties of a presentation of the path set  $X(1, P_k)$.
The resulting directed graphs  are shown to be reducible,  having  a complicated structure
with  nested strongly connected components.  
 

\begin{thm}\label{th109} {\rm (Path set presentation for family $P_k$)}

 (1) For $P_k=2 \cdot 3^k+1= (20^{k-1}1)_3$,
the path set $X(1, P_k)$ underlying $\mathcal{C}(1,P_k)$
has a path set presentation $(\sG_k, v_0)$ 
 that has exactly $2^{k+1}$ vertices. 
 
 (2) The graph $\sG_k$ is a nested 
 sequence of $1+ \lfloor k/2\rfloor$ distinct strongly connected components. 
 
 (3) The underlying graph $G=G_k$ for $\sG_k$ has an automorphism of order $2$ and is a connected double
 cover of its quotient graph $H_k$.
\end{thm}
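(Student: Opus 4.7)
The plan is to construct the presentation $(\sG_k, v_0)$ directly from the carry dynamics of multiplication by $P_k = 1 + 2\cdot 3^k$ on $\ZZ_3$. Writing $x = \sum_{j \geq 0} x_j 3^j \in \Sigma_3$ with the convention $x_j = 0$ for $j < 0$, the coefficient of $3^n$ in $P_k x$ before carry propagation is $x_n + 2 x_{n-k}$, so admissibility of the entire multiplication reduces to a local condition on $(x_n, x_{n-k}, c_n)$, where $c_n \in \{0,1\}$ is a running carry. I take the state space to be all $(b,c) \in \{0,1\}^k \times \{0,1\}$, with $b = (b_1, \ldots, b_k)$ the window of the last $k$ input digits and $c$ the current carry. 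Admissibility of input $a \in \{0,1\}$ from $(b,c)$ is then $a + 2 b_1 + c \in \{0,1,3,4\}$, and the algorithm of Part I \cite{AL14c} certifies that the resulting labeled graph rooted at $v_0 = (0^k, 0)$ presents $X(1, P_k)$. This gives $2^{k+1}$ potential states, and to complete (1) I will verify each is reachable from $v_0$ by exhibiting explicit paths: the first $k$ steps from $v_0$ are free (they stay within the $c = 0$ hemisphere) and can produce any $b \in \{0,1\}^k$, and subsequent forced transitions then move into the $c = 1$ hemisphere.

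A crucial simplification, verified by direct inspection of the four cases for $(b_1, c)$, is that after any admissible transition the new carry satisfies $c' = b_1$. This allows me to repackage each state as a length-$(k+1)$ binary word $s = c\, b_1 b_2 \cdots b_k$, with transitions acting by left-shift and append, $s \mapsto s' = b_1 b_2 \cdots b_k\, a$, and with the input constraint reduced to the clean form: $a = s_1$ is forced when $s_0 \neq s_1$, otherwise $a$ is arbitrary.

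For part (2), introduce the statistic $t(s) = |\{\, 0 \le i \le k-1 : s_i \neq s_{i+1} \,\}|$ counting adjacent unequal pairs. Direct substitution gives
\[ t(s') \;=\; t(s) - \mathbf{1}\{s_0 \neq s_1\} + \mathbf{1}\{s_k \neq a\}. \]
Free moves ($s_0 = s_1$) therefore yield $t(s') \in \{t(s), t(s)+1\}$, while forced moves ($s_0 \neq s_1$, $a = s_1$) give $t(s') = t(s) - 1 + \mathbf{1}\{s_k \neq s_1\}$. Combined with the parity identity $t(s) \equiv s_0 + s_k \pmod 2$ this yields: from even $t$ a forced move has $s_0 = s_k \neq s_1$ and keeps $t$ fixed, while from odd $t$ one gets $s_k = 1 - s_0 = s_1$ and $t$ decreases by exactly $1$. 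Hence $\lfloor t/2 \rfloor$ is non-decreasing along every edge, so each SCC is contained in one of the level sets $L_i = \{s : \lfloor t(s)/2 \rfloor = i\}$ for $i = 0, 1, \ldots, \lfloor k/2 \rfloor$. The main obstacle of the proof is showing each level is itself a single SCC; I plan to do this by exhibiting an explicit cycle through every state of $L_i$. For $L_0$ this is the natural $2(k+1)$-cycle $0^{k+1} \to 0^k 1 \to \cdots \to 0\, 1^k \to 1^{k+1} \to 1^k 0 \to \cdots \to 1\, 0^k \to 0^{k+1}$ alternating free appends and forced wraparounds, and for higher $L_i$ an analogous construction visits all states by sweeping through the admissible positions of the extra runs that raise $t$ to $2i$ or $2i+1$. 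The distinctness and nested structure of the $1 + \lfloor k/2 \rfloor$ SCCs then follows at once from the existence of the single free-move edges $L_i \to L_{i+1}$, available whenever $L_i$ contains an $s$ with $t(s) = 2i+1$ and $s_0 = s_1$ (a counting argument confirms such $s$ always exists for $i < \lfloor k/2 \rfloor$).

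For part (3), define the involution $\sigma(b, c) = (\bar b, \bar c)$ by coordinate-wise complementation. A direct computation using $a' + 2 \bar{b}_1 + \bar{c} = 4 - (a + 2 b_1 + c)$ shows that admissibility is preserved under the simultaneous change $(s, a) \mapsto (\sigma(s), 1 - a)$, and that the target state of the transformed edge is $\sigma$ of the original target. Thus $\sigma$ is an order-$2$ automorphism of the underlying undirected graph $G_k$; it is fixed-point free since no binary word equals its own bitwise complement, and $G_k$ is connected because every vertex is reachable from $v_0$ by part (1). The quotient map $G_k \to H_k := G_k / \langle \sigma \rangle$ is therefore a connected double cover.
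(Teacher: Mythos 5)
Your carry-automaton encoding is sound: the admissibility rule $a+2b_1+c\in\{0,1,3,4\}$, the identity $c'=b_1$, and the repackaging of states as binary words $s=c\,b_1\cdots b_k$ with shift-and-append transitions are all correct, and your construction is essentially an isomorphic relabeling of the paper's Algorithm-A presentation: your bitwise complement $\sigma$ corresponds to the paper's reflection $R(m)=3^k-m$ in Proposition \ref{pr49}, your statistic $\lfloor t(s)/2\rfloor$ corresponds to the paper's notion of depth, and your level counts $2\binom{k+1}{2i+1}$ agree with Proposition \ref{pr410a}(2). Part (3) is complete as you present it, and the monotonicity of $\lfloor t/2\rfloor$ along edges (your parity argument) correctly reproduces Proposition \ref{pr410a}(1). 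One small repair is needed in part (1): a single forced transition out of the $c=0$ hemisphere only produces states of the form $1w1$ (last digit $1$), so "subsequent forced transitions then move into the $c=1$ hemisphere" does not by itself reach states $1w0$. The fix is easy and parallels your own $c=0$ argument: reach $1^{k+1}$ via the forced move from $01^k$, and then $k$ consecutive free moves from $1^{k+1}$ produce every state $1u$, $u\in\{0,1\}^k$.

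The genuine gap is in part (2), and it sits exactly where the paper's proof does its heavy lifting. You establish that every strongly connected component lies inside a level set $L_i$, but the converse claim -- that each $L_i$ is a \emph{single} strongly connected component -- is only gestured at ("an analogous construction visits all states by sweeping through the admissible positions of the extra runs"). For $i\ge 1$ the level $L_i$ contains $2\binom{k+1}{2i+1}$ states with several runs whose lengths and positions must all be realized, and mutual reachability is not obvious: at higher levels forced moves dominate, and one must shuffle runs past one another without ever performing a free move that raises $t$ from $2i+1$ to $2i+2$. Your $L_0$ cycle does not generalize in any evident way. The paper's corresponding argument, Proposition \ref{pr410a}(3), is the longest part of the whole proof: it shows one can always make a depth-preserving move, gives a label-decreasing procedure (A) toward a minimal label, and then runs a backward-path algorithm ((B) and (C), illustrated in Table 4.1) that normalizes all blocks of $2$'s to length one and "scrolls" to the minimal vertex. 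Until you supply an argument of comparable substance for strong connectivity of each $L_i$ (translated to your coordinates: mutual reachability of all binary words with a prescribed value of $\lfloor t/2\rfloor$ under shift-and-append with the forced/free rule), part (2), and hence the theorem, is not proved. The remaining items you derive for (2) -- distinctness, nonemptiness of the levels, and the existence of edges $L_i\to L_{i+1}$ giving the chain structure -- are fine.
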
  

The structure of $G_k$ is that
of    a ``Matryoshka doll" with a single set of nested components
at each level.  The non-monotonicity of the Hausdorff dimension as
a function of $k$   can be  related to the existence of multiple
strongly connected components in the graphs $G_k$. The non-monotonicity occurs 
  because of a switch in  which strongly connected component
has the largest topological entropy. We discuss this issue further  in Section \ref{sec53}, see Remark \ref{rem47}.


Regarding the behavior of the Hausdorff dimension as $k \to \infty$, we establish
the following result. 


\begin{thm}\label{th109c} {\rm (Hausdorff dimension bounds for family $P_k=2 \cdot 3^k+1$)}

(1) The Hausdorff dimension of $\mathcal{C}(1, P_k)$ satisfies the asymptotic lower bound
\[
\liminf_{k \to \infty}  \dim_H(\mathcal{C}(1,P_k)) \geq \frac{1}{8} \log_3 (2).
\] 

(2) Furthermore, for all $k \ge 1$, 
\[\dim_H (\mathcal{C}(1,P_k)) \geq \frac{1}{13} \log_3(2).
\]
\end{thm}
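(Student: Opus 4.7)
The strategy is to exhibit, within $X(1,P_k)$, explicit sub-path-sets whose Hausdorff dimensions supply the two lower bounds. Both bounds amount to lower-bounding the Perron eigenvalue of the adjacency matrix of $\sG_k$, which by Theorem~\ref{th109} has $2^{k+1}$ vertices and a nested sequence of $1+\lfloor k/2\rfloor$ strongly connected components.

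For part (1), my plan is to use the interleaving operation of Section~\ref{sec34} to build, for all sufficiently large $k$, a sub-path-set $Y_k \subseteq X(1,P_k)$ whose Perron eigenvalue is at least $2^{1/8}$. Concretely, I would look for $\alpha \in \mathcal{C}(1,P_k)$ whose $3$-adic digits $(a_i)_{i \ge 0}$ are free in $\{0,1\}$ at positions lying in an arithmetic progression of common difference $N \le 8$, with the remaining digits pinned to $0$. Whenever the spacing $N$ is chosen incommensurable with $k$ (and small compared to it), each free digit $a_i = a_{jN}$ interacts with $a_{i-k}$ lying in a residue class forced to $0$, so that the contribution $a_i + 2a_{i-k}$ remains a valid digit, and the only carry generated by $P_k \alpha \in \Sigma_3$ propagates through a block of digit-$0$ positions and dies within one step. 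The resulting sub-path-set, viewed as an $N$-fold interleaving of a free binary channel with trivial channels, will have Hausdorff dimension $\tfrac{1}{N}\log_3 2 \ge \tfrac{1}{8}\log_3 2$ by the entropy formula for interleavings from Section~\ref{sec34}.

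For part (2), I would combine two arguments. For small $k$ (say $k$ up to the threshold $K_0$ demanded by the construction of part (1)), the bound $\dim_H(\mathcal{C}(1,P_k)) \ge \tfrac{1}{13}\log_3 2 \approx 0.049$ can be verified directly from Table~2.1 and routine extensions, since every Perron eigenvalue in that table comfortably exceeds $2^{1/13} \approx 1.055$. For $k > K_0$, the same interleaving construction applies, possibly with the interleaving period enlarged to $N \le 13$ in the worst-case residue of $k$ modulo $N$, to guarantee that the carry dynamics still decouple across the free positions.

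The main obstacle is designing the interleaving uniformly enough to handle every $k$: the nested strongly connected components of $\sG_k$ from Theorem~\ref{th109}(2) shift which component carries the largest topological entropy as $k$ varies, a phenomenon already visible in the non-monotonicity of the dimensions in Table~2.1. Consequently the sub-path-set $Y_k$ cannot be identified with a single distinguished component but must be seen as a structure cutting uniformly across all $\sG_k$. Verifying that the $\alpha$'s produced by the proposed template indeed satisfy $P_k \alpha \in \Sigma_3$ for every relevant $k$ and every setting of the free bits, and controlling the propagation length of carries generated by the $2 \cdot 3^k \alpha$ summand so that consecutive free positions are truly independent, is the core technical step on which both bounds hinge.
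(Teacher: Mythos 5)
Your central construction for part (1) breaks down, and the failure is concrete. In your template the digits of $\alpha$ are free in $\{0,1\}$ on an arithmetic progression of gap $N$ and pinned to $0$ everywhere else, and you only check admissibility at the free position $i$ itself, where $a_i+2a_{i-k}=a_i$ is fine. But $P_k\alpha=\alpha+2\cdot 3^k\alpha$ also imposes a condition at position $i+k$: the raw digit there is $a_{i+k}+2a_i$. If $N\nmid k$ (your ``incommensurable'' case) then $i+k$ is a pinned position, so this raw digit is $2a_i$; moreover, since no two positions $m,m-k$ can both carry a $1$ in your template, every raw digit of $\alpha+2\cdot 3^k\alpha$ is at most $2$, so no carries are ever generated anywhere, and a raw $2$ can never be promoted to $3$. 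Hence the product has the forbidden digit $2$ at $i+k$ whenever a free digit is set to $1$: the only members of your template lying in $\mathcal{C}(1,P_k)$ are those with all free digits $0$, and the sub-path-set has entropy $0$, giving no lower bound. (If instead $N\mid k$, then $a_i=1$ forces $a_{i+k}=1$, so along each progression a single $1$ forces an infinite forced tail of $1$'s; the number of admissible prefixes grows only polynomially and the entropy is again $0$.) Since part (2) rests on part (1) for all $k$ beyond an unspecified threshold, plus an unspecified finite computation, it is unsupported as well.

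The paper's proof addresses exactly the point your proposal waves away with ``the only carry \dots dies within one step.'' It does not use an all-zero background: it writes down the explicit eventually periodic set $S=\{(1100)^a0^b((1x00)^a0^b(1000)^{a-1}1000^b)^\infty\}$ with $a=\lfloor k/4\rfloor$, $k=4a+b$, in which fixed $1$'s are positioned so that the doubled contribution $2x$ of each free digit lands where the background digit plus incoming carry equals $1$ (so both $x=0$ and $x=1$ can be absorbed, the two cases differing only by a carry that is then harmlessly emitted), and it verifies admissibility by tracing vertex labels through the automaton of Proposition \ref{pr49}; the count of $\lfloor k/4\rfloor$ free digits per period of length $2k-1$ gives both constants. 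So while both arguments have the same shape (exhibit an explicit low-complexity subset and count free digits), the entire content is the carry bookkeeping, which in your write-up is exactly the missing step. A cautionary remark: this bookkeeping is genuinely delicate even in the paper. For example, for $k=4$ the string in $S$ with the second period's free slot set to $0$ already produces the digit $2$ at position $12$ of $P_4\alpha$ (equivalently, the claimed loop at the vertex $0^b(0022)^a$ actually terminates at $0^b(0020)^a$, from which the next free slot is forced to be $1$), so the published witness itself needs repair, and any correct version of this argument must verify the carry pattern consistently from one period to the next, not just within a single period.
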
  

The lower bounds in Theorem \ref{th109c} 
are obtained by further inspection of the graph associated to $\mathcal{C}(1,P_k)$. 
We also have an upper bound
\[
\dim_H (\mathcal{C}(1,P_k))  \leq \log_3 \phi.
\]
which follows from Theorem \ref{r2bound} below.

In Section \ref{sec54} we obtain 
additional results on intersection of sets in the infinite  family $P_k$  above.
We show that the Hausdorff dimensions of
arbitrarily large intersections are always positive. 
However this is no longer true if we allow intersections of sets from the infinite family $P_k$
with those of the infinite family $N_k = (10^{k-1}1)_3$ treated in \cite[Sect. 4]{AL14c} and reviewed in Appendix A (Section ~\ref{secA0}),
which also consists of numbers having exactly two nonzero ternary digits. 
For example, it is easy to show that for each $k \ge 1$,
\[ 
\mathcal{C}(1, N_k, P_k) = \{0\},
\]
 so that
 $\dim_{H}(\mathcal{C}(1, N_k, P_k)) =0$.

%
%
%

\subsection{The infinite family $Q_k = (2^k0^{k-1}1)_3$}\label{sec22A}

We next study an infinite family of integers whose 
number of nonzero ternary digits grows without bound: $Q_k = (2^k0^{k-1}1)_3 = 3^{2k}-3^k+1$.
The example $Q_2$ having a large Hausdorff dimension
was found by computer search, and led to study of this family.


\begin{thm} \label{thqkstruct} (Path set presentation for  family $Q_k$) 

(1) For $Q_k = 3^{2k}-3^k+1 = (2^k0^{k-1}1)_3$, the path set $X(1,Q_k)$ 
underlying $\mathcal{C}(1,Q_k)$ has a path set presentation $(\mathcal{G}_k,v_0)$
 that has exactly $4^k$ vertices and $6 \cdot 4^{k-1}$ edges. 
 
 (2) The underlying graph $\mathcal{G}_k$ is strongly connected.
\end{thm}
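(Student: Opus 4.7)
The plan is to apply the carry-based path-set algorithm of Part~I \cite{AL14c} directly to $M = Q_k$, prune to live states, and verify by direct combinatorics that the resulting graph has the claimed vertex and edge counts and is strongly connected. Write $\alpha = \sum_{i \ge 0} a_i 3^i$ with $a_i \in \{0,1\}$. Since $Q_k = 1 + 2 \sum_{j=0}^{k-1} 3^{k+j}$, the coefficient of $3^n$ in $Q_k \alpha$ before absorbing carries is
\[
\mathrm{coef}_n \;=\; a_n + 2\bigl(a_{n-k} + a_{n-k-1} + \cdots + a_{n-2k+1}\bigr),
\]
with $a_j := 0$ for $j<0$; membership in $\mathcal{C}(1,Q_k)$ amounts to $b_n \in \{0,1\}$ for all $n$, where $b_n + 3 c_{n+1} = \mathrm{coef}_n + c_n$.

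I would take the raw states to be pairs $v = (c,w)$ with carry $c \in \mathbb{Z}_{\ge 0}$ and window $w = (a_{n-2k+1}, \ldots, a_{n-1}) \in \{0,1\}^{2k-1}$: this is the shortest record of past input needed to compute the contribution of already-read digits to all future coefficients. From $(c,w)$ a label $a \in \{0,1\}$ is admissible iff $a + 2S + c \not\equiv 2 \pmod 3$, where $S := w_1 + \cdots + w_k$; the edge then goes to $(c', w')$ with $c' = \lfloor(a + 2S + c)/3\rfloor$ and $w'$ the left shift of $w$ with $a$ appended. The initial vertex is $v_0 := (0, 0^{2k-1})$. An easy invariant from the recursion $c_{n+1} \le (1 + 2k + c_n)/3$ shows that every live carry lies in $\{0,1,\ldots,k\}$, so the raw live state space has size at most $(k+1)\cdot 2^{2k-1}$.

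The main combinatorial step is to identify a set $\mathcal{L}_k$ of exactly $4^k$ live states and show every other state with $c \le k$ is either unreachable from $v_0$ or forward-dead. I would proceed constructively: for each candidate $(c,w) \in \mathcal{L}_k$, exhibit an explicit \emph{driving} word of length $O(k)$ from $v_0$ to $(c,w)$ and an explicit \emph{flushing} word from $(c,w)$ back to $v_0$, both guided by the recursive shape $Q_k = 1 + 2 \cdot 3^k \cdot \tfrac{3^k-1}{2}$. The description of $\mathcal{L}_k$ is expected to tie the carry to the leading entries of the window; already the small cases $k = 1,2$ reveal that the natural tying condition has the form $0 \le c - T(w_1,\ldots,w_{k-1}) \le 1$ for an affine $T$, giving $|\mathcal{L}_k| = 2 \cdot 2^{2k-1} = 4^k$. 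Strong connectedness of $\mathcal{G}_k$ is then immediate, since for any two live $u, v$ the composite $u \to v_0 \to v$ lies in $\mathcal{G}_k$.

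For the edge count, the out-degree at $(c,w) \in \mathcal{L}_k$ is $2$ iff $2S + c \equiv 0 \pmod 3$ and $1$ otherwise. The target edge count $6 \cdot 4^{k-1} = \tfrac{3}{2}\cdot 4^k$ then forces exactly half of $\mathcal{L}_k$ to satisfy $2S + c \equiv 0 \pmod 3$, which I would verify directly from the description of $\mathcal{L}_k$. The main obstacle is pinning down $\mathcal{L}_k$ and producing the driving and flushing words uniformly in $k$; I anticipate this yielding to an induction on $k$ exploiting that $Q_{k+1}$ arises from $Q_k$ by prepending one more leading $2$ and shifting the trailing $1$ by one base-$3$ position, with small-$k$ hand computations serving as the base case.
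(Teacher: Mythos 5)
Your automaton setup is correct as far as it goes: the state $(c,w)$ with carry $c$ and window $w\in\{0,1\}^{2k-1}$ does determine all future admissibility, the carry bound $c\le k$ is right, and the out-degree criterion ($2$ exactly when $2S+c\equiv 0 \pmod 3$, else $1$; in particular no state is ever dead, so pruning is purely a reachability question). But what you have written is a plan, not a proof. The three claims that carry all of the content are left as expectations: (i) the exact description of the reachable set $\mathcal{L}_k$ — your affine ``tying condition'' $0\le c-T(w_1,\dots,w_{k-1})\le 1$ is only extrapolated from the cases $k=1,2$ and is never stated, let alone proved, for general $k$; (ii) the driving and flushing words, which are what both the count $|\mathcal{L}_k|=4^k$ and strong connectivity rest on, are not produced uniformly in $k$; and (iii) the parity statement that exactly half of $\mathcal{L}_k$ satisfies $2S+c\equiv 0\pmod 3$, which is the entire content of the edge count $6\cdot 4^{k-1}$, is deferred to a verification ``from the description of $\mathcal{L}_k$'' that does not yet exist. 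You say so yourself: ``the main obstacle is pinning down $\mathcal{L}_k$.'' That obstacle is precisely the hard part of the theorem, so the proposal has a genuine gap rather than a complete argument by a different route.

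For comparison, the paper sidesteps this bookkeeping through interleaving: Proposition~\ref{qkprop1} proves that $X(1,Q_k)=X(1,7)^{(*k)}$, i.e.\ the digit positions in the $k$ residue classes modulo $k$ decouple and each class independently runs the $4$-vertex, $6$-edge automaton presenting $X(1,7)$; then Proposition~\ref{aprop1} and Remark~\ref{rem35} yield the $4^k$ vertices, $6\cdot 4^{k-1}$ edges, and strong connectivity purely formally. Note that this decoupling is essentially the same combinatorial fact you would need about $\mathcal{L}_k$: on the reachable part of your automaton the pair $(c,w)$ should be equivalent to a $k$-tuple of states of the $X(1,7)$ automaton, one per residue class. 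So completing your plan would amount to reproving Proposition~\ref{qkprop1} in less convenient coordinates, and the induction on $k$ you anticipate (prepending a leading $2$ and shifting the trailing $1$) is not obviously easier than the paper's induction over residue classes; if you want to salvage your approach, the cleanest fix is to prove the decoupling statement directly and then let the counting and connectivity follow as in the interleaving construction.
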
 

Though the number of nonzero ternary digits of $Q_k$ grows without bound, 
 the Hausdorff dimension of $\mathcal{C}(1,Q_k)$ is constant 
independent of  $k$.


\begin{thm}\label{th111a} 
{\rm (Hausdorff dimensions for family $Q_k=3^{2k}-3^k+1$)}
For all $k \ge 2$ the Hausdorff dimension of 
$\mathcal{C}(1, Q_k)$ satisfies 
 \[
\dim_H(\mathcal{C}(1,Q_k)) = \log_3 \phi \approx 0.438018,
\]
where $\phi= \frac{1+ \sqrt{5}}{2}$.
\end{thm}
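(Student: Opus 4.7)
The plan is to exhibit an explicit positive eigenvector of the adjacency matrix $A_k$ of $(\mathcal{G}_k,v_0)$ with eigenvalue $\phi$. By the theory of $3$-adic path set fractals reviewed in Section \ref{sec200}, $\dim_H \mathcal{C}(1,Q_k)$ equals $\log_3$ of the Perron eigenvalue of $A_k$; since $\mathcal{G}_k$ is strongly connected by Theorem \ref{thqkstruct}, Perron--Frobenius guarantees that any strictly positive eigenvector pins down this eigenvalue.

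A counting argument sets the stage. Edge labels lie in $\{0,1\}$ and the presentation is right-resolving, so each out-degree lies in $\{1,2\}$ (strong connectedness excludes $0$). Let $V_A$ and $V_B$ be the sets of vertices of out-degrees $2$ and $1$ respectively. The totals $|V_A|+|V_B|=4^k$ and $2|V_A|+|V_B|=6\cdot 4^{k-1}$ supplied by Theorem \ref{thqkstruct} force $|V_A|=|V_B|=2\cdot 4^{k-1}$, mirroring exactly the numerics of the two-state golden mean automaton. The structural claim that drives the proof is that $(V_A,V_B)$ transports the golden mean transition rule: every $v\in V_A$ emits one edge into $V_A$ and one into $V_B$, while every $v\in V_B$ emits its single edge into $V_A$. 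Granting the claim, the vector $u$ with $u_v=\phi$ on $V_A$ and $u_v=1$ on $V_B$ satisfies $(A_k u)_v = \phi+1 = \phi\cdot\phi$ on $V_A$ and $(A_k u)_v = \phi = \phi\cdot 1$ on $V_B$ via $\phi^2=\phi+1$, so Perron--Frobenius gives Perron eigenvalue $\phi$ and hence $\dim_H \mathcal{C}(1,Q_k)=\log_3\phi$.

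The main obstacle is verifying the structural claim. I would encode the vertices of $\mathcal{G}_k$ via the carry-and-window data arising in the digit-by-digit multiplication of $x\in \Sigma_3$ by $Q_k = 3^{2k} - 3^k + 1$: the current carry $c$ together with a sliding window of the last $2k-1$ digits of $x$ that govern future digit constraints. The dichotomy between out-degrees $2$ and $1$ is controlled by whether the residue modulo $3$ of the carry minus the window sum vanishes, so $V_A$ and $V_B$ are defined by a single congruence. A direct case analysis, organized by this congruence and by the effect of appending $x_{j+1}\in \{0,1\}$, should then match each out-edge to its $V_A/V_B$ type. The highly regular block pattern $(2^k 0^{k-1} 1)_3$ of $Q_k$ should make the analysis uniform in $k$, which is precisely why $\dim_H \mathcal{C}(1,Q_k)$ is constant in $k$.
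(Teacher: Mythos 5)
Your counting step is fine ($|V_A|=|V_B|=2\cdot 4^{k-1}$ does follow from Theorem \ref{thqkstruct}), but the structural claim that drives your proof is false for every $k\ge 2$, so the proposed vector is not an eigenvector. In the presentation $\mathcal{G}_k$ the two out-edges of an out-degree-$2$ vertex lead to vertices of the \emph{same} out-degree: since $\mathcal{G}_k$ is the $k$-fold interleaving of the four-state graph presenting $X(1,7)$ (Proposition \ref{qkprop1} and the construction in Proposition \ref{aprop1}), the out-degree of the vertex you land on is governed by the coordinate recorded $k$ steps earlier, not by which digit you just appended. You can see the failure directly in the paper's Example \ref{examp51} for $Q_2=73$: the initial vertex $0$ has out-degree $2$ and both of its out-neighbours (itself and the vertex labeled $220$) have out-degree $2$, while $220$ has out-degree $2$ and both of its out-neighbours ($1012$ and $22$) have out-degree $1$ (rows $1$ and $2$ of the displayed adjacency matrix). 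Hence with $u=\phi$ on $V_A$ and $u=1$ on $V_B$ one gets $(A_k u)_{0}=2\phi\neq \phi^2$. The carry-and-window case analysis you sketch cannot rescue this, because the $V_A/V_B$ type of an edge's target is simply not a function of the type of its source; any genuine eigenvector must weight a vertex according to the out-degrees of all $k$ interleaved coordinates, not just its own out-degree.

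For comparison, the paper avoids the eigenvector computation altogether: Proposition \ref{qkprop1} identifies the underlying path set as the interleaving $X(1,Q_k)=X(1,7)^{(*k)}$, and Proposition \ref{aprop3} together with Corollary \ref{acor1} shows interleaving preserves topological entropy and hence Hausdorff dimension, so $\dim_H\mathcal{C}(1,Q_k)=\dim_H\mathcal{C}(1,7)=\log_3\phi$. If you want to keep a spectral argument, the clean repair is at the level of $A_k^k$: in the interleaved presentation a length-$k$ path from $v_{i_1,\ldots,i_k}$ to $v_{j_1,\ldots,j_k}$ corresponds to choosing one edge $i_t\to j_t$ in the four-state graph for each $t$, so $A_k^k$ is (after the obvious identification of indices) the $k$-fold tensor power of the $4\times 4$ matrix for $X(1,7)$; therefore $\rho(A_k)^k=\phi^k$ and $\rho(A_k)=\phi$, which is exactly the entropy-preservation statement in matrix form.
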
  

This result is established by showing that the path set $X(1, Q_k)$ is
given by an interleaving construction from the path set $X(1, Q_1)$,
that is
$X(1, Q_k) = X(1, 7)^{(\ast k)},$
as defined in Section \ref{sec34}.
%
%
%
%

\subsection{The $n$-digit Hausdorff dimension constants  $\arr_n$.}\label{sec23A}

It is a  known fact  that the number of nonzero ternary digits in $(2^n)_3$ goes to infinity
as $n \to \infty$, i.e. for each $k \ge 2$ there are only finitely many $n$ with $(2^n)_3$ having
at most $k$ nonzero ternary digits.  Using this fact we easily deduce the following
consequence.

\begin{thm} \label{th1N1}
The nesting constant $\Gamma$ satisfies
\begin{equation}
\Gamma \le   \lim_{n \to \infty}  \arr_n.
\end{equation}
In particular
\[
\dim_{H}(\mathcal{E}(\ZZ_3)) \le  \Gamma_{**} = \lim_{n \to \infty}  \arr_n.
\]
\end{thm}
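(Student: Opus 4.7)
The plan is to exploit the decomposition of $\mathcal{E}^{(k)}(\ZZ_3)$ as a countable union of intersection sets (given in the text), reduce each to a one-parameter set $\mathcal{C}(1, 2^n)$ via multiplicative rescaling by a $3$-adic unit followed by monotonicity, and then invoke the number-theoretic fact (to be discussed in Section \ref{sec6}) that $\ddd_3(2^n) \to \infty$ as $n \to \infty$.

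First I would observe that, because $\gcd(2,3)=1$, every $2^{m_1}$ is a unit in $\ZZ_3$, and multiplication by a unit is a $3$-adic isometry, hence preserves Hausdorff dimension.  Applying the factor $2^{m_1}$ to the defining intersection gives
\[
2^{m_1} \cdot \mathcal{C}(2^{m_1},\ldots,2^{m_k}) \;=\; \mathcal{C}(1, 2^{n_1},\ldots,2^{n_{k-1}})
\]
where $n_i := m_{i+1}-m_1$, so in particular $n_{k-1} \ge k-1$.  The evident containment $\mathcal{C}(1, 2^{n_1},\ldots,2^{n_{k-1}}) \subseteq \mathcal{C}(1, 2^{n_{k-1}})$ together with monotonicity of Hausdorff dimension and the countable-stability property $\dim_H(\bigcup_i A_i) = \sup_i \dim_H(A_i)$ then yields
\[
\dim_{H}(\mathcal{E}^{(k)}(\ZZ_3)) \;\le\; \sup_{n \ge k-1} \dim_{H}(\mathcal{C}(1, 2^n)).
\]

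At this point I would invoke the number-theoretic input: for every $N \ge 1$ there exists $n_0 = n_0(N)$ such that $\ddd_3(2^n) \ge N$ for all $n \ge n_0$.  Once $k-1 \ge n_0(N)$, every term in the supremum above is the Hausdorff dimension of some $\mathcal{C}(1, M)$ with $M = 2^n$ having at least $N$ nonzero ternary digits, so by the very definition of $\arr_N$ the supremum is bounded above by $\arr_N$.  Letting $k \to \infty$ therefore gives $\Gamma \le \arr_N$, and letting $N \to \infty$ gives $\Gamma \le \lim_{n \to \infty} \arr_n = \Gamma_{\star\star}$.  The ``in particular'' statement is then immediate from the containment $\mathcal{E}(\ZZ_3) \subseteq \bigcap_{k} \mathcal{E}^{(k)}(\ZZ_3)$ recorded in \eqref{1013}, which implies $\dim_{H}(\mathcal{E}(\ZZ_3)) \le \Gamma$.

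The dimension-theoretic and dynamical ingredients are entirely formal, depending only on $3$-adic scaling invariance, monotonicity, and countable stability of Hausdorff dimension; no substantive computation with automata is needed.  The main (and essentially only nontrivial) obstacle is the divergence $\ddd_3(2^n)\to \infty$, which is an instance of a classical transcendence-theoretic fact rather than something to be proved from scratch here.  Thus, granted the citation to be supplied in Section \ref{sec6}, the entire proof should occupy only a few lines.
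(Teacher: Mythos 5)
Your proposal is correct and follows essentially the same route as the paper: drop to the sets $\mathcal{C}(1,2^n)$ with $n$ large via the union decomposition of $\mathcal{E}^{(k)}(\ZZ_3)$ and countable stability of Hausdorff dimension, then invoke the Senge--Straus/Stewart divergence $\ddd_3(2^n)\to\infty$ and the definition of $\arr_N$. Your explicit rescaling by the $3$-adic unit $2^{m_1}$ (an isometry, hence dimension-preserving) is a nice clarification of a normalization the paper passes over silently, but it is not a different argument.
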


It follows that  individual values $\arr_n$ give upper bounds on $\Gamma$.

\begin{thm}\label{th106a}
We have for all $k \ge 2$ that
$$ 
\arr_{k}= \log_3 \phi \approx 0.438018,
$$
 where $\phi= \frac{1+ \sqrt{5}}{2}$
is the golden ratio. This value is attained by $\mathcal{C}(1,Q_k)$ for
$$
Q_{k} : = (2^{k} 0^{k-1} 1)_3.
$$
\end{thm}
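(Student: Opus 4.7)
The plan is to establish Theorem~\ref{th106a} by matching a direct lower bound against a general upper bound, both of which are consequences of results available earlier in the paper. Since the supremum in Definition~\ref{def131} is taken over a smaller set of $M$ as $n$ grows, $\arr_n$ is automatically a nonincreasing function of $n$, and the substantive task is to identify its value at each $k \ge 2$.

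For the lower bound, I would first observe that the ternary expansion $(Q_k)_3 = (2^k 0^{k-1} 1)_3$ contains exactly $k+1$ nonzero digits, namely the $k$ twos together with the trailing $1$. In particular $\ddd_3(Q_k) = k+1 \ge k$, so $Q_k$ is an admissible witness in the supremum defining $\arr_k$. Theorem~\ref{th111a} then supplies
\[
\dim_{H}(\mathcal{C}(1,Q_k)) = \log_3 \phi \quad \text{for all } k \ge 2,
\]
and combining these two facts gives $\arr_k \ge \log_3 \phi$, which also verifies the ``attained by $\mathcal{C}(1,Q_k)$'' clause in the theorem statement.

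For the upper bound, I would invoke Theorem~\ref{r2bound} (referenced just after Theorem~\ref{th109c}), which furnishes $\dim_{H}(\mathcal{C}(1,M)) \le \log_3 \phi$ whenever the ternary expansion of $M$ has at least two nonzero digits. Every $M$ admissible for $\arr_k$ with $k \ge 2$ satisfies this hypothesis, so taking the supremum over such $M$ yields $\arr_k \le \log_3 \phi$. Together the two bounds force equality.

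In this sense the proof is a short formal consequence of two substantial earlier results: the exact Hausdorff dimension computation for the family $Q_k$ (Theorem~\ref{th111a}, which itself depended on presenting $X(1,Q_k)$ as an interleaving of the path set $X(1,7)$ via the construction of Section~\ref{sec34}) and the general upper bound of Theorem~\ref{r2bound}. The only genuinely delicate input is the universal upper bound for all $M$ with $\ddd_3(M) \ge 2$; once that is in hand, no further graph- or automaton-theoretic analysis is required at this stage, and the matching lower bound is immediate from the explicit family $Q_k$.
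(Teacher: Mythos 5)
Your proposal is correct and is essentially the paper's own argument: the lower bound comes from Theorem \ref{th111a} applied to the admissible witness $Q_k$ (which has $k+1\ge k$ nonzero ternary digits), and the upper bound comes from Theorem \ref{r2bound}, whose proof is where all the real work lies. One small precision: Theorem \ref{r2bound} is stated only for $M \equiv 1 \ (\bmod\ 3)$, not for all $M$ with at least two nonzero digits, so to cover the remaining $M$ in the supremum you should also invoke the Part I facts recalled in the introduction, namely that $\dim_{H}(\mathcal{C}(1,M))=0$ when $M \equiv 2 \ (\bmod\ 3)$ and that trailing zeros may be stripped without changing the dimension or the count of nonzero digits; with that one-line reduction your argument is complete and coincides with the paper's.
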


In particular this result yields an improved upper bound on the nesting constant 
$$
\Gamma \le \log_3 \phi,
$$
and on the Hausdorff dimension of the Exceptional set. 
It also gives
$$
\Gamma_{\star\star} = \log_3 \phi \approx 0.438018.
$$
We prove Theorem \ref{th106a} in Section \ref{sec42n}.

Using the known bound 
for the generalized dyadic nesting constant $\Gamma_{\star} \le \arr_2$
 established in Part I \cite[(1.16)]{AL14c}  we obtain the following corollary.

\begin{cor}\label{th106b}
We have
$$
\Gamma_{\ast}  \le  \log_3 \phi \approx 0.438018, 
$$
in which  $\phi= \frac{1+ \sqrt{5}}{2}$
is the golden ratio.
\end{cor}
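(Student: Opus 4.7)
The plan is to deduce this bound by chaining together two previously established inequalities, so the proof is essentially a one-step combination rather than a new argument. First I would recall the inequality $\Gamma_{\star} \le \arr_2$, which was established in Part I \cite[(1.16)]{AL14c}; this is the input that lets us replace the generalized nesting constant by a statistic about a single multiplicative translate $\mathcal{C}(1,M)$.

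Next I would invoke Theorem \ref{th106a} in the special case $k=2$. That theorem asserts $\arr_k = \log_3 \phi$ for all $k \ge 2$, and in particular exhibits the set $\mathcal{C}(1, Q_2)$ with $Q_2 = (2200 1)_3 = 3^4 - 3^2 + 1$ as realizing this supremum among $\mathcal{C}(1,M)$ with at least two nonzero ternary digits. Combining these two facts gives immediately
\[
\Gamma_{\star} \le \arr_2 = \log_3 \phi \approx 0.438018,
\]
which is the desired bound.

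The proof has no real obstacle at the corollary stage — all the substance lies in Theorem \ref{th106a}, whose proof requires both the upper bound $\arr_k \le \log_3 \phi$ (via the structural results on the families $P_k$, $Q_k$ and the general Perron eigenvalue bound) and the matching lower bound coming from the $Q_k$ family and the interleaving analysis of Section \ref{sec34}. Once Theorem \ref{th106a} is in hand, the corollary is immediate. The only minor thing to verify is that the assumption ``at least two nonzero ternary digits'' in the definition of $\arr_2$ is compatible with the way $\Gamma_{\star}$ is bounded in Part I; but since any $M \ge 1$ with a trailing $2$ contributes $\dim_H \mathcal{C}(1,M) = 0$, and $M = 3^a$ gives $\mathcal{C}(1, M) = \Sigma_3$ which is absorbed into the trivial $k=1$ layer, the reduction is exactly as needed.
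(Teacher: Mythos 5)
Your proposal is correct and matches the paper's own argument exactly: combine $\Gamma_{\star}\le\arr_2$ from Part I \cite[(1.16)]{AL14c} with $\arr_2=\log_3\phi$ from Theorem \ref{th106a}. Only a trivial slip: $Q_2=3^4-3^2+1=73$ has ternary expansion $(2201)_3$, not $(22001)_3$.
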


%
%
%
\subsection{Notation} \label{sec151}
The notation $(m)_3$ means either the base $3$ expansion of the positive integer $m$, or else
the $3$-adic expansion of $(m)_3$. In the $3$-adic case this expansion is to be read right to left,
so that it is compatible with the ternary expansion. That is, $\alpha = \sum_{j=0}^{\infty} a_j 3^j$
will be written $( \cdots a_2 a_1 a_0)_3$.

%
%
%

\section{Symbolic dynamics, path sets and $p$-adic path set fractals} \label{sec200}

%
%
%

\subsection{Symbolic dynamics, graphs and finite automata } \label{sec21}

The constructions of this paper are based on 
the fact that the points in  intersections of multiplicative translates of $3$-adic Cantor sets
have $3$-adic expansions that are describable in terms of allowable paths generated by
finite directed labeled graphs. 
We  use  symbolic dynamics on certain closed subsets of the one-sided shift space $\Sigma=\sA^{\NN}$ with fixed 
symbol alphabet
$\sA$, which for our application will be specialized to $\sA=\{0,1,2\}$.
A basic reference for directed graphs and symbolic dynamics, 
which we follow, is  Lind and Marcus \cite{LM95}.

By a {\em graph} we mean a finite directed graph, allowing loops and multiple edges. A {\em labeled graph}
is a graph assigning labels  to each directed edge; these labels are drawn from a finite symbol alphabet.
A labeled directed graph can be interpreted as a {\em  finite automaton}
in the sense of automata theory.
In our applications to $3$-adic digit sets, the labels are drawn from the alphabet $\sA= \{ 0, 1, 2\}.$ In a directed
graph, a vertex is a {\em source} if all directed edges touching that vertex are outgoing; it is a {\em sink} if all
directed edges touching that edge are incoming. A vertex  is {\em essential} if it is neither a source nor a sink;
and is called {\em stranded} otherwise. A graph is \emph{essential} if all of its vertices are essential. 
 A  graph $G$ is
  {\em strongly connected} if for each two vertices $i, j$ there is a directed path from $i$ to $j$.
We let $SC(G)$ denote the set of strongly connected component subgraphs of $G$.

We use some basic facts from the Perron-Frobenius theory of nonnegative matrices.
The {\em Perron eigenvalue} (\cite[Definition 4.4.2]{LM95}) of a  nonnegative  real matrix $\mathbf{A} \neq 0$ is
the largest real  eigenvalue $\beta \ge 0$ of $\mathbf{A}$.  
A nonnegative matrix is {\em irreducible} if for each row and column $(i, j)$ some power ${\bf A}^m$
has $(i,j)$-th entry nonzero.  A nonnegative matrix ${\bf A}$ is {\em primitive}  if some power ${\bf A}^k$
for an integer $k \ge 1$ has all entries positive; primitivity implies irreducibility but not vice versa.
The {\em Perron-Frobenius Theorem} \cite[Theorem 4.2.3]{LM95} for
an irreducible nonnegative matrix ${\bf A}$ states that:
\begin{enumerate}
\item
The Perron eigenvalue $\beta$ is geometrically
and algebraically simple, and has
 an everywhere positive eigenvector ${\bf v}.$
 \item
 All other eigenvalues $\mu$ have $|\mu| \le \beta$, so that $\beta = \sigma({\bf A})$,
 the spectral radius of ${\bf A}$.
 \item 
 Any other everywhere positive eigenvector must 
 be  a positive multiple of ${\bf v}$.
 \end{enumerate}
 For a general nonnegative 
real matrix $\mathbf{A} \neq 0$, the Perron eigenvalue need not be simple, but it
still equals the spectral radius $\sigma(\bf{A})$ and it has at least one  everywhere nonnegative eigenvector.

We apply this theory to adjacency matrices of graphs. 
A (vertex-vertex) {\em adjacency matrix} ${\bf A} ={\bf A}_{G}$ of the directed graph  $G$ has
entry $a_{ij}$ counting the number of directed edges from vertex $i$ to vertex $j$.
The adjacency matrix is irreducible if and only if the associated graph is strongly connected,
and we also call the graph {\em irreducible} in this case.
  Here primitivity of the adjacency matrix of 
a directed  graph $G$ is equivalent to the graph being strongly connected
and aperiodic, i. e.  the greatest common divisor of its (directed) cycle lengths is  $1$. 
For an adjacency matrix of a graph containing 
at least one directed cycle,  its Perron eigenvalue is necessarily a real algebraic integer $\beta \ge 1$
(see Lind \cite{Lin84} for a characterization of these numbers).

%
%
%

\subsection{$p$-Adic path sets, sofic shifts  and $p$-adic path set fractals} \label{sec22}

Our basic objects are special cases of the following definition. 
A {\em pointed graph} is a pair $(\sG, v)$ consisting of a directed labeled graph $\sG =(G, \mathcal{E})$
and a  marked vertex $v$ of $\sG$. Here $G$ is a (directed) graph and $\mathcal{E}$ is 
an assignment of labels $(e, \ell)= (v_1, v_2, \ell)$ to the edges of $G$, where every edge gets a single label,
and no two triples are the same (but multiple edges and loops are permitted otherwise).

\begin{defn} \label{de211}
 Given a pointed graph $(\sG, v)$ its associated
 \emph{path set} 
   $\sP = X_\mathcal{G}(v) \subset \sA^{\NN}$ is
the set of  all infinite one-sided symbol sequences $(x_0, x_1, x_2, ...) \in \sA^{\NN}$, 
giving the successive labels of all one-sided infinite walks in $\mathcal{G}$ 
issuing from the distinguished vertex $v$. 
Many different $(\mathcal{G}, v)$ may give the same path set $\sP$, and we call any such
$(\mathcal{G},v)$  a \emph{presentation} of $\sP$. 
\end{defn}

An important class of presentations have the following extra property.
We  say that a directed labeled graph $\sG =(G, v)$ is {\em right-resolving}
if for each vertex of $\sG$ all directed edges outward have
distinct labels. (In automata theory $\sG$ is called a {\em deterministic
automaton}.) One can show that every path set has a right-resolving presentation.

Note that the labeled  graph $\sG$ without a marked vertex determines
a {\em one-sided sofic shift} in the sense of symbolic dynamics, as defined in \cite{AL14a}. 
This sofic shift  comprises
the set union of the path sets at all vertices of $\sG$.
Path sets are closed sets in the shift topology,
but are in general non-invariant under the one-sided shift operator.
Those path sets $\sP$  that are invariant are exactly the one-sided sofic shifts \cite[Theorem 1.4]{AL14a}. 

We study the path set concept in symbolic dynamics in \cite{AL14a}.
 The collection of path sets $\mathcal{P}= X_{\sG}(v)$ in a given
alphabet is closed under finite union and intersection (\cite[Theorem 1.2]{AL14a}). 
The symbolic dynamics analogue of Hausdorff dimension is topological entropy. 
The {\em topological entropy} of a path set $H_{top} (\mathcal{P})$ is given by
$$
H_{top}(\mathcal{P}) := \limsup_{n \to \infty} \frac{1}{n} \log N_n(\mathcal{P}),
$$
where $N_n(\mathcal{P})$ counts the number of distinct blocks of symbols of lengh $n$
appearing in elements of $\mathcal{P}$. 
The topological entropy is easy to compute given a right-resolving presentation.
By \cite[Theorem 1.13]{AL14a}, it is 
 \begin{equation} \label{top-entropy}
 H_{top}(\mathcal{P}) = \log \beta
 \end{equation}
 where 
 $\beta$ is the Perron eigenvalue of the adjacency matrix ${\bf A}={\bf A}_G$ of 
 the underlying directed graph $G$ of $\sG$, e.g. the spectral radius of ${\bf A}$.

%
%
%

\subsection{$p$-Adic symbolic dynamics and graph directed constructions}\label{sec23}

We now suppose $\sA = \{0, 1,2, ..., p-1\}$.
We can view the elements  of 
a path set $\mathcal{P}$ on this alphabet  geometrically as describing the digits in the
$3$-adic expansion of a $3$-adic integer.  This is done using a map
$\phi: \sA^{\NN} \to \ZZ_p$
from symbol sequences into $\ZZ_p$.
We call the resulting image set $K = \phi(\mathcal{P})$ a \emph{$p$-adic path set fractal}.
Such sets are studied in \cite{AL14b}, where they are related to
graph-directed fractal constructions.
The class of  $p$-adic path set fractals  is closed under the Minkowski sum and $p$-adic addition
and multiplication by rational numbers $r \in \QQ$ that lie in $\ZZ_p$ (\cite[Theorems 1.2-1.4]{AL14b}).

It is possible to compute the Hausdorff dimension of a 
$p$-adic path set fractal  directly from a suitable presentation 
of the underlying path set $\mathcal{P}=X_{\sG}(v)$.
We will use the following result.
\begin{prop}\label{pr22a}
Let $p$ be a prime, and $K$ a set of $p$-adic integers whose allowable
$p$-adic expansions are described by the symbolic dynamics
of a $p$-adic path set  $X_K$ on symbols $\mathcal{A} =\{ 0, 1, 2, \cdots, p-1\}$.
Let $(\mathcal{G},v)$ be a presentation of
this path set that is right-resolving.

(1)  The map $\phi_p: \mathbb{Z}_p \rightarrow [0,1]$ taking 
$\alpha= \sum_{k=0}^{\infty}{a_k p^k} \in \mathbb{Z}_p$
to the real number with base $p$ expansion
 $\phi_p(\alpha) :=\sum_{k=0}^\infty \frac{a_k}{p^{k+1}}$
is a continuous map, and the image of $K$ under this map,  $K':= \phi_p(K) \subset [0,1]$, is a
graph-directed fractal in the sense of Mauldin-Williams.

 (2) The Hausdorff dimension of the $p$-adic path set fractal $K$ is 
\begin{equation}
\dim_{H}(K) = \dim_{H}(K') = \log_p  \beta,
\end{equation}
where $\beta$ is the spectral radius of the adjacency matrix ${\bf A}$ of $G$.
\end{prop}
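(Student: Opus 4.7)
The plan is to translate the $p$-adic path set setup into the language of graph-directed iterated function systems and then invoke Mauldin-Williams.

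For part (1), I would first verify continuity directly: if two $p$-adic integers agree in their first $k$ digits, then their images under $\phi_p$ agree in the first $k$ base-$p$ digits, so $|\phi_p(\alpha)-\phi_p(\alpha')| \le p^{-k}$. To realize $K'$ as a Mauldin-Williams graph-directed fractal, for each vertex $u$ of $\mathcal{G}$ let $K_u \subset \mathbb{Z}_p$ be the set of $p$-adic integers whose expansions correspond to infinite walks starting at $u$, and set $K'_u = \phi_p(K_u)$; then $K' = K'_v$. To each labeled edge $u \xrightarrow{d} u'$ of $\mathcal{G}$ associate the similarity $T_{u,u',d}(x) = (d+x)/p$ on $[0,1]$. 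Reading off the first digit of an infinite walk starting at $u$ gives the system
\[
K'_u = \bigcup_{u \xrightarrow{d} u'} T_{u,u',d}(K'_{u'}).
\]
Since the presentation is right-resolving, all outgoing labels $d$ at each vertex $u$ are distinct, so the images $T_{u,u',d}([0,1]) = [d/p,(d+1)/p]$ overlap only at endpoints; this gives the Mauldin-Williams open set condition using the open interval $(0,1)$. Thus $(K'_u)$ is genuinely a graph-directed fractal.

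For part (2), Mauldin-Williams theory asserts that the common Hausdorff dimension $s$ of the $K'_u$ (for an irreducible system satisfying the open set condition) is the unique $s$ with spectral radius of the matrix $\mathbf{A}(s)_{uu'} = \sum_{u \xrightarrow{d} u'} (1/p)^s$ equal to $1$. Here every edge has the same contraction ratio $1/p$, so $\mathbf{A}(s) = p^{-s}\mathbf{A}_G$ and the condition becomes $p^{-s}\beta = 1$, giving $s = \log_p \beta$. For the equality $\dim_H(K) = \dim_H(K')$ I would argue that $\phi_p$ is a homeomorphism off a countable set (the $p$-adic integers whose image is a base-$p$ rational of the form $m/p^k$), and on the complement the two-sided estimate $p^{-k-1} \le |\phi_p(\alpha)-\phi_p(\alpha')| \le p^{-k}$ whenever $|\alpha-\alpha'|_p = p^{-k}$ shows that $\phi_p$ preserves Hausdorff dimension; a countable exceptional set contributes dimension $0$ and is harmless.

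The main obstacle is that $\mathcal{G}$ need not be strongly connected — indeed Theorem~\ref{th109} exhibits families $P_k$ where $\mathcal{G}$ has many nested strongly connected components. The classical Mauldin-Williams theorem applies cleanly to the irreducible case. To handle the general case I would decompose $\mathcal{G}$ into its strongly connected components $C_1,\ldots,C_r$, apply Mauldin-Williams to each irreducible subsystem to get $\dim_H(K'_u) = \log_p \beta_j$ for $u$ in a component whose accessible recurrent set has Perron eigenvalue $\beta_j$, and express $K'_v$ as a finite union indexed by the recurrent components reachable from $v$ together with transient ``prefix'' pieces. Since Hausdorff dimension of a finite union is the maximum of the dimensions and the spectral radius $\beta$ of $\mathbf{A}_G$ equals the maximum of the $\beta_j$ over components reachable from $v$ (after pruning stranded vertices, which contribute only finitely many sequences), this gives $\dim_H(K) = \log_p \beta$ as required.
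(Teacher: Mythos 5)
The paper itself does not reprove this proposition: its ``proof'' is a citation to \cite[Section 2]{AL14b}, and your reconstruction via a Mauldin--Williams graph-directed system with maps $T_{u,u',d}(x)=(d+x)/p$, the open set condition coming from right-resolving, and $\mathbf{A}(s)=p^{-s}\mathbf{A}_G$ is exactly the kind of argument being outsourced there. Two points, however, need repair. The serious one is your justification of $\dim_H(K)=\dim_H(K')$: the claimed two-sided estimate $p^{-k-1}\le |\phi_p(\alpha)-\phi_p(\alpha')|$ for $|\alpha-\alpha'|_p=p^{-k}$ is false even off the countable set of preimages of $p$-adic rationals. For $p=3$ take $\alpha$ with digit string $0\,2\,2\cdots 2\,1\,t_0t_1\cdots$ ($m$ twos) and $\alpha'$ with digit string $1\,0\,0\cdots 0\,1\,t_0t_1\cdots$, with a common aperiodic tail $(t_i)$; then $|\alpha-\alpha'|_3=1$ while $|\phi_3(\alpha)-\phi_3(\alpha')|=3^{-m-1}$, and neither image is of the form $a/3^j$. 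So $\phi_p^{-1}$ is not Lipschitz (nor even uniformly continuous) off the exceptional set, and ``homeomorphism off a countable set'' by itself does not preserve dimension. The standard fix: $\phi_p$ is $1$-Lipschitz, giving $\dim_H(K')\le\dim_H(K)$; for the reverse inequality compute Hausdorff measure in $[0,1]$ using only the net of $p$-adic intervals $[jp^{-k},(j+1)p^{-k}]$ (any interval of length $\le p^{-k}$ is covered by at most two of them, so this changes the measure by a bounded factor), and observe that such intervals pull back to $p$-adic balls, so any efficient cover of $K'$ yields an equally efficient cover of $K$. Alternatively, one can prove $\dim_H(K)=\log_p\beta$ directly in $\mathbb{Z}_p$ (upper bound by counting admissible cylinders of depth $n$, lower bound by a mass-distribution measure supported on paths through a component realizing $\beta$), which avoids transferring dimension through $\phi_p$ altogether.

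The second point is minor but should be stated correctly: in the reducible case your decomposition of $K'_v$ is not a finite union --- the transient prefixes (which may wind around earlier components) form a countably infinite family --- but countable stability of Hausdorff dimension gives the same conclusion, since each piece is a similarity copy of some $K'_u$ with $u$ in a strongly connected component. You also need the component attaining the spectral radius $\beta$ of $\mathbf{A}$ to be reachable from the marked vertex $v$; this is automatic here because the presentations produced by Algorithm A consist only of vertices reachable from $v$ (and pruning sinks does not change $\beta$ when the graph has any cycle), but it is the one place where the pointedness of $(\mathcal{G},v)$ genuinely enters and deserves an explicit sentence.
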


\begin{proof}
These results are  proved in \cite[Section 2]{AL14b}.
\end{proof}

In this paper we treat the case $p=3$ with $\sA = \{ 0, 1, 2\}$.
The $3$-adic Cantor set is a $3$-adic path set fractal, so these
general properties above guarantee that the intersection of 
a finite number of multiplicative translates
of $3$-adic Cantor sets will itself be a $3$-adic path set fractal $K$,
generated from an underlying path set. 

To do calculations with such sets we will need algorithms
for converting presentations of a given $p$-adic path set to presentations
of new $p$-adic path sets derived by the operations above. 
We refer the reader to \cite{AL14b} for the $p$-adic arithmetic operations, and to \cite{AL14a} for union and intersection. A further useful operation 
called {\em interleaving} will be developed in the next subsection; 
this operation is sometimes useful in computing Hausdorff dimension.

%
%
\subsection{Interleaving operation on path sets} \label{sec34}

Let $\mathcal{P}  = X_{\mathcal{G}}(v) \subset \mathcal{A}^{\mathbb{N}}$ be a path set, and let $n$ be a positive integer. 
In the paper \cite{AL14a} the first and third authors studied a {\em decimation} operation on path sets.
Given $j \ge 0$ and $m \ge 1$, define
the {\em decimation map} $\psi_{j, m}: \sA^{\NN} \to \sA^{\NN}$ by
$$
\psi_{j, m}(a_0 a_1 a_ 2 \cdots) := (a_j a_{j+m} a_{j+2m} \cdots).
$$
The decimation  operation extracts the digits of the path set in a specified 
infinite arithmetic progression of indices. We set  
$$
\psi_{j,m}(\sP) := \{ \psi_{j, m}(x): x \in \sP\}.
$$
Here \cite[Theorem 1.5]{AL14a} proved that if $\sP$ is
a path set, then for each fixed $(j, m)$ with $j \ge 0, m\ge1$ the sets $\psi_{j,m}(\sP)$ are 
path sets. 

Here we consider a kind of inverse operator to decimation, which we term  {\em interleaving}.

\begin{defn}\label{de341}
Let $n \ge 1$ be given.
The {\em $n$-interleaving } of a closed set $\mathcal{X} \subset \sA^{\NN}$ (not necessarily a path set)
is 
\[
\mathcal{X}^{(*n)} := \{ (x_i)_{i=0}^\infty \in \mathcal{A}^{\mathbb{N}}\, : \, (x_j, x_{j+n}, x_{j+2n}, \cdots) \in \mathcal{X} \text{ for all } 0 \leq j \leq n-1\}.
\]
\end{defn}

We will show that the interleaving 
$\mathcal{P}^{(*n)}$ is itself a path set, and that its topological entropy is the same as that of $\mathcal{P}$.


\begin{prop} \label{aprop1}  
(1) For any $n \ge1$ and any path set $\mathcal{P}$, the $n$-interleaving set $\mathcal{P}^{(*n)}$ is a path set.

 (2) There is an algorithm taking $n$ and a path set presentation $\mathcal{G}$ of $\mathcal{P}$ 
 and giving a path set presentation $\mathcal{H}$ of $\mathcal{P}^{(*n)}$. If $\mathcal{G}$ has $k$ verticies and $m$ edges, then $\mathcal{H}$ has $k^n$ verticies and $m k^{n-1}$ edges.
\end{prop}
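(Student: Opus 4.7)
The plan is to build a presentation $\mathcal{H}$ of $\mathcal{P}^{(*n)}$ from a given presentation $(\mathcal{G}, v_0)$ of $\mathcal{P}$ by a product-with-rotation construction. Take the vertex set of $\mathcal{H}$ to be $V(\mathcal{G})^n$. For each labeled edge $u \xrightarrow{a} u'$ of $\mathcal{G}$ and each $(n{-}1)$-tuple $(w_1, \ldots, w_{n-1}) \in V(\mathcal{G})^{n-1}$, introduce a labeled edge
\[
(u, w_1, \ldots, w_{n-1}) \xrightarrow{\;a\;} (w_1, \ldots, w_{n-1}, u')
\]
in $\mathcal{H}$, and declare the initial vertex to be $(v_0, v_0, \ldots, v_0)$. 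The intuition is that the first coordinate always holds the state of the phase about to read the next symbol; after the read we cyclically shift so that the next phase's state rises to the front. The counts then fall out immediately: $|V(\mathcal{H})| = k^n$, and each vertex $(u, w_1, \ldots, w_{n-1})$ has as many outgoing edges as $u$ has in $\mathcal{G}$, giving $k^{n-1} \sum_{u} \mathrm{outdeg}(u) = m k^{n-1}$ edges in total.

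The substantive step is to verify that $X_{\mathcal{H}}((v_0, \ldots, v_0)) = \mathcal{P}^{(*n)}$. For the forward inclusion, I would show by induction on $t$ that after reading $x_0 x_1 \cdots x_{t-1}$ along any walk in $\mathcal{H}$ starting at $(v_0, \ldots, v_0)$, the current vertex lists the $n$ per-phase states reached so far in $\mathcal{G}$, ordered \emph{phase about to move next, then the phase after that, \ldots, then the phase that just moved}. This description makes transparent that each decimated sequence $(x_j, x_{j+n}, x_{j+2n}, \ldots)$ is the label sequence of a legal infinite walk in $\mathcal{G}$ from $v_0$, and hence lies in $\mathcal{P}$. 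For the reverse inclusion, given $(x_i) \in \mathcal{P}^{(*n)}$ I would take the $n$ witness walks in $\mathcal{G}$ from $v_0$ for the decimated subsequences and interleave them by the same rotation rule to produce the required walk in $\mathcal{H}$.

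With the presentation in hand, part (1) is immediate since $(\mathcal{H}, (v_0, \ldots, v_0))$ is by construction a pointed labeled directed graph whose path set is $\mathcal{P}^{(*n)}$. Part (2) follows because the construction is explicit and produces $\mathcal{H}$ with exactly $k^n$ vertices and $m k^{n-1}$ edges in time polynomial in $k + m$ for fixed $n$. I would also observe in passing that if $(\mathcal{G}, v_0)$ is right-resolving, then so is $\mathcal{H}$, because the outgoing labels at $(u, w_1, \ldots, w_{n-1})$ are inherited from those at $u$ in $\mathcal{G}$ and are thus distinct; this will be useful in the application to the family $Q_k$ when computing topological entropy via the Perron eigenvalue.

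The only genuine obstacle is bookkeeping: pinning down the cyclic-shift convention on the tuples so that the induction on $t$ aligns cleanly with the $n$ decimated walks. Once that convention is fixed, both inclusions and the vertex/edge counts are essentially automatic, and no further difficulty is expected.
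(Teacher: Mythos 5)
Your construction is the same product-with-rotation presentation the paper uses (the paper keeps the active state in the last coordinate and pushes the updated state to the front, a mirror image of your convention), with the identical vertex/edge counts, the same two-inclusion verification by tracking per-phase states, and the same remark that right-resolving is inherited. The proposal is correct and essentially reproduces the paper's proof.
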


\begin{proof} 
It suffices to prove (2).
Suppose $\mathcal{P} = X_{\mathcal{G}}(v_0)$, and that the vertices of $\mathcal{G}$ are $v_0, v_1, \ldots, v_{k-1}$, so that $\mathcal{G}$ 
has $k$ vertices. Let $l_j$ be the label of vertex $v_j$ for each $0\leq j \leq k-1$. If the $l_j$ do not all have the same number of digits, 
append $0's$ to the left of labels as necessary to ensure that the labels $l_0, \ldots , l_j$ are distinct and have the same number of digits. 

The vertex set of $\mathcal{H}$ will be $V = \{ v_{i_1, i_2,\ldots, i_n} | 0 \leq i_j \leq k-1 \text{ for all j}\}$, so that $\mathcal{H}$ will have $k^n$ vertices. The vertex $v_{i_1,i_2,\ldots, i_n}$ will have label $l = l_{i_1}\star  l_{i_2} \star \cdots\star l_{i_n}$, that is, the concatenation of the labels 
of $v_{i_1}, v_{i_2}, \ldots, v_{i_n}$. Since the labels $l_j$ are all distinct and have the same number of digits, the vertex labels in $\mathcal{H}$ 
as defined will also be distinct.

Now for each edge labeled $a$ from $v_i$ to $v_j$ in $\mathcal{G}$, construct an edge labeled $a$ from $v_{i_1,i_2,\ldots, i_{n-1},i}$ to $v_{j,i_1,i_2,\ldots, i_{n-1}}$ for all $0 \leq i_1, i_2, \ldots, i_{n-1} \leq k-1$. Thus, for each edge of $\mathcal{G}$, $\mathcal{H}$ will have $k^{n-1}$ corresponding edges, 
so that if $\mathcal{G}$ has $m$ edges, then $\mathcal{H}$ has $mk^{n-1}$ edges. $\mathcal{H}$ is evidently right-resolving or strongly connected if $\mathcal{G}$ is right-resolving or strongly connected, respectively. For simplicity, we will assume from here that $\mathcal{G}$ is right-resolving.
We can do this since if $\mathcal{G}$ is not right-resolving, we can perform the right-resolving construction of \cite[Section 3]{AL14a} to obtain a right-resolving presentation of $\mathcal{P}$, and proceed with this presentation in place of $\mathcal{G}$.

We claim that $\mathcal{P}^{(*n)} = X_{\mathcal{H}}(v_{0,0,\ldots,0})$. First we will show that $\mathcal{P}^n \subseteq X_{\mathcal{H}}(v_{0,0,\ldots,0})$. Suppose $(x_t)_{t=0}^\infty \in \mathcal{P}^n$. Then there must be elements 
\[(x_{0,t})_{t=0}^\infty, (x_{1,t})_{t=0}^\infty, \ldots, (x_{n-1,t})_{t=0}^\infty \in \mathcal{P}
\]
 such that $x_{j,t} = x_{nt+j}$ for all $0 \leq j \leq n-1$ and $0 \leq t < \infty$. Since $\mathcal{G}$ is right-resolving, each of these elements of $\mathcal{P}$ corresponds to a unique infinite vertex path $v_0,v_{i_{j,0}},v_{i_{j,1}}, \ldots$ in $\mathcal{G}$.  We can traverse an initial path in the pointed graph $\mathcal{H}(v_{0,0,0,\ldots, 0})$ with labels \newline $x_0, x_1,\ldots, x_{n-1}$, since there are edges with each of these labels emanating from $v_0$ in $\mathcal{G}$. 
 This path takes us to the vertex $v_{i_{n-1,0},i_{n-2,0},\ldots, i_{0,0}}$. Since there is a vertex labeled $x_{n+j}$ emenating fom vertex $v_{i_{j,0}}$
  and going to $v_{i_{j,1}}$ for all $0 \leq j \leq n-1$, we can extend our path to a path labeled $x_0, x_1,\ldots, x_{2n-1}$ beginning at $v_{0,0,\ldots,0}$ and ending at $v_{i_{n-1,1},i_{n-2,1},\ldots, i_{0,1}}$.

Inductively, assume we have constructed a path with labels $x_0, x_1, \ldots, x_{rn-1}$ in $\mathcal{H}$ originating at $v_{0,0, \ldots, 0}$ and terminating at $v_{i_{n-1,r-1},i_{n-2,r-1},\ldots, i_{0,r-1}}$. Then since there is an edge in $\mathcal{G}$ labeled $x_{rn+j}$ from $v_{j,r-1}$ to $v_{j,r}$, 
we can extend our path to a path labeled $x_0, x_1, \ldots, x_{(r+1)n-1}$ terminating at $v_{i_{n-1,r},i_{n-2,r},\ldots, i_{0,r}}$. 
Thus, there is an infinite path in $\mathcal{H}$ originating at $v_{0,0,\ldots, 0}$ with label $(x_0, x_1, x_2, \ldots)$, so $(x_i)_{i=0}^\infty \in X_{\mathcal{H}}(v_{0,0,\ldots,0})$, hence $\mathcal{P}^n \subseteq  X_{\mathcal{H}}(v_{0,0,\ldots,0})$. 

Now to show  $X_{\mathcal{H}}(v_{0,0,\ldots,0}) \subseteq \mathcal{P}^n$: Suppose $(x_i)_{i=0}^\infty$ is an element of 
$X_{\mathcal{H}}(v_{0,0,\ldots,0})$. Then there is a vertex path $v_{0,0,\ldots, 0} ; v_{i_0,0,\ldots,0}; v_{i_1,i_0,0,\ldots,0}; \ldots; v_{i_{n-1}, i_{n-2};\ldots, i_0}; \ldots$ in $\mathcal{H}$ which can be traversed by edges labeled $x_0,x_1,\ldots$. 
Notice that the first coordinate of a vertex must be the last coordinate of the vertex that follows after $n-1$ steps. Since the initial vertex is $v_{0,0,\ldots,0}$, we know that for each $0 \leq j \leq n-1$, there is an edge in $\mathcal{G}$ labeled $x_j$ from $v_0$ to $v_{i_j}$. For any $j < \infty$, an edge in $\mathcal{H}$ labeled $x_j$ from $v_{i_1,i_2,\ldots, i_n}$ to $v_{i_{n+1},i_1,i_2,\ldots, i_{n-1}}$ corresponds to an edge in $\mathcal{G}$ labeled $x_j$ fom $v_{i_n}$ to $v_{i_{n+1}}$. Following our path in $\mathcal{H}$ for $n-1$ more steps gets us to a vertex whose last coordinate is $i_{n+1}$, so the edge in $\mathcal{H}$ labeled $x_{n+j}$ emanating from this vertex corresponds to an edge in $\mathcal{G}$ labeled $x_{n+j}$ emanating from $v_{i_{n+1}}$. Thus, for each $0 \leq j \leq n-1$, the labels $(x_j, x_{j+n} x_{j+2n}, \ldots)$ are the labels of an infinite path in $\mathcal{G}$ originating at $v_0$, so $(x_i)_{i=0}^\infty \in \mathcal{P}^n$, hence $X_{\mathcal{H}}(v_{0,0,\ldots,0}) \subseteq \mathcal{P}^n$, as desired.
\end{proof}


\begin{rem} \label{rem35}
(1) The presentation $\mathcal{H}$ of $\mathcal{P}^{(*n)}$ given in
the proof above  is right-resolving  (resp. strongly connected) if and only if the presentation $\mathcal{G}$ of $\mathcal{P}$ used in its construction is right-resolving (resp. strongly connected).

(2) The operation of interleaving can be extended to interleave several different sets 
$$
\sI(X_1, X_2, ..., X_m) := \{ x \in \sA^{\NN}: \psi_{j, m}(x) \in X_i \quad \mbox{for} \quad 0 \le j \le m-1.\}
$$
One can show that if each $X_i = \sP_i$ is a path set then $\sI(\sP_1, \sP_2, \cdots, \sP_n)$
is a path set. 
\end{rem}

We next show that the  $n$-interleaving operation $\mathcal{P}^{(*n)}$ has the nice feature
that it  preserves topological entropy. Following \cite{AL14a}
we  define the \emph{path topological entropy} $H_p(\mathcal{P})$ of a path set $\mathcal{P}$ by
\begin{equation} \label{apeq2} 
H_p(\mathcal{P}) := \limsup_{k \rightarrow \infty} \frac{1}{k} \log N_k^I(\mathcal{P}),
\end{equation}
where $N_k^I(\mathcal{P})$ is the number of \emph{initial} blocks of length $k$ from $\mathcal{P}$, then \cite[Theorem 1.11]{AL14a} shows that
\begin{equation}\label{apeq3} H_p(\mathcal{P}) = H_{top}(\mathcal{P}),
\end{equation}
and that the $\limsup$'s are obtained as limits.


\begin{prop} \label{aprop3} 
If $\mathcal{P}$ is a path set, then
\begin{equation} \label{apeq1} H_{top}(\mathcal{P}^{(*n)} )= H_{top}(\mathcal{P}).
\end{equation}
\end{prop}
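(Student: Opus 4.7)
The plan is to use the path topological entropy characterization from (3.2)--(3.3), which says $H_{top}(\mathcal{P}) = H_p(\mathcal{P}) = \lim_{k\to\infty} \frac{1}{k} \log N_k^I(\mathcal{P})$, and to exploit the key factorization property of interleaving: an initial block of length $k$ in $\mathcal{P}^{(*n)}$ decomposes into $n$ independent initial blocks of $\mathcal{P}$.

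More precisely, I would first observe that by Definition 3.4, a finite word $(x_0, x_1, \ldots, x_{k-1})$ is an initial block of an element of $\mathcal{P}^{(*n)}$ if and only if, for each $0 \le j \le n-1$, the decimated word $(x_j, x_{j+n}, x_{j+2n}, \ldots, x_{j + n(k_j-1)})$ is an initial block of some element of $\mathcal{P}$, where $k_j := \#\{t : 0 \le t \le k-1, \, t \equiv j \pmod{n}\}$. The forward direction is immediate from the definition; for the converse, one uses the fact that arbitrary extensions of each of the $n$ initial blocks in $\mathcal{P}$ can be interleaved to produce an element of $\mathcal{P}^{(*n)}$. Since the $n$ choices of initial blocks in $\mathcal{P}$ are independent, this yields the factorization
\[
N_k^I(\mathcal{P}^{(*n)}) \;=\; \prod_{j=0}^{n-1} N_{k_j}^I(\mathcal{P}).
\]

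Next I would note that $\sum_{j=0}^{n-1} k_j = k$ and each $k_j \in \{\lfloor k/n \rfloor, \lceil k/n \rceil\}$, so $k_j/k \to 1/n$ and $k_j \to \infty$ as $k \to \infty$. Taking logarithms and dividing by $k$,
\[
\frac{1}{k} \log N_k^I(\mathcal{P}^{(*n)}) \;=\; \sum_{j=0}^{n-1} \frac{k_j}{k} \cdot \frac{1}{k_j} \log N_{k_j}^I(\mathcal{P}).
\]
Since $\frac{1}{k_j} \log N_{k_j}^I(\mathcal{P}) \to H_p(\mathcal{P})$ by \eqref{apeq3} (and the fact that the $\limsup$ in (3.2) is actually a limit, as stated), and each weight $k_j/k \to 1/n$, the right-hand side converges to $n \cdot \frac{1}{n} \cdot H_p(\mathcal{P}) = H_p(\mathcal{P})$. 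Thus $H_p(\mathcal{P}^{(*n)}) = H_p(\mathcal{P})$, and applying \eqref{apeq3} once more yields $H_{top}(\mathcal{P}^{(*n)}) = H_{top}(\mathcal{P})$.

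The only nontrivial step is justifying the factorization $N_k^I(\mathcal{P}^{(*n)}) = \prod_j N_{k_j}^I(\mathcal{P})$; the issue is confirming that any tuple of initial blocks of $\mathcal{P}$ of the correct lengths can be simultaneously extended and interleaved, which is really just the independence of the $n$ decimated coordinate sequences in Definition 3.4, together with the fact that a finite block lies in the initial-block language of $\mathcal{P}$ precisely when it extends to an infinite element. Alternatively, one could prove Proposition 3.6 by appealing directly to the explicit presentation $\mathcal{H}$ constructed in Proposition 3.5: its adjacency matrix is (up to reordering) related to that of $\mathcal{G}$ in a way that makes its Perron eigenvalue equal to that of $\mathcal{G}$ when one restricts to the relevant strongly connected component, and then \eqref{top-entropy} gives the result. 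The counting argument above, however, is cleaner and avoids any spectral-radius manipulation.
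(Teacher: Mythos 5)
Your proof is correct and follows essentially the same route as the paper: the paper also reduces to path entropy via \eqref{apeq3} and uses the factorization of initial-block counts, in the slightly simpler form $N_{nk}^I(\mathcal{P}^{(*n)}) = \bigl(N_k^I(\mathcal{P})\bigr)^{n}$ along block lengths divisible by $n$. Your refinement to arbitrary $k$ (with the counts $k_j$) is a harmless variant of the same counting argument, so nothing further is needed.
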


\begin{proof} Using (\ref{apeq3}), it suffices to show that $\mathcal{P}$ and $\mathcal{P}^{(*n)}$ have the same path entropy. But we can see directly from the definition of $\mathcal{P}^{(*n)}$ that $N_{nk}^I(\mathcal{P}^{(*n)}) = (N_k^I(\mathcal{P}))^{n}$, since an initial path of length $nk$ in $\mathcal{P}^{(*n)}$ corresponds to $n$ (not necessarily distinct) initial paths of length $k$ in $\mathcal{P}$. Thus,
\begin{align*}
H_p(\mathcal{P}^{(*n)}) &= \lim_{k \rightarrow \infty} \frac{1}{k} \log N_k^I(\mathcal{P}^{(*n)})\\
&= \lim_{k \rightarrow \infty} \frac{1}{nk} \log N_{nk}^I(\mathcal{P}^{(*n)}) \\
&= \lim_{k \rightarrow \infty} \frac{1}{nk} \log [(N_k^I(\mathcal{P}))^{n}] \\
&= \lim_{k \rightarrow \infty} \frac{1}{k} \log N_k^I(\mathcal{P}) = H_p(\mathcal{P}),
\end{align*}
as desired.
\end{proof}

If $\mathcal{A}=\{0,1,\ldots,p-1\}$, let $\phi: \mathbb{A}^\mathbb{N} \rightarrow \mathbb{Z}_p$ be the map of Section ~\ref{sec23}, 
which maps the path set 
$\mathcal{P}$ to the corresponding $p$-adic path set fractal $K = \phi(\mathcal{P})$. 
We have the following Corollary.


\begin{cor} \label{acor1}
 If $\mathcal{P}$ is a path set on the  alphabet $\mathcal{A} =\{0,1,2,\ldots,p-1\}$, then the 
$p$-adic path set fractals $K=\phi(\mathcal{P})$ and $K'=\phi(\mathcal{P}^{(*n)})$ have the same Hausdorff dimension.
\end{cor}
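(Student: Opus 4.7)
The plan is to reduce the statement about Hausdorff dimensions of the $p$-adic path set fractals to the statement about topological entropies of the underlying path sets, which has already been established in Proposition \ref{aprop3}.

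First, I would verify that both $\mathcal{P}$ and $\mathcal{P}^{(*n)}$ admit right-resolving presentations, so that Proposition \ref{pr22a} applies to both. For $\mathcal{P}$ this is guaranteed by the general fact (recalled in Section \ref{sec22}) that every path set has a right-resolving presentation. For $\mathcal{P}^{(*n)}$, I would invoke Proposition \ref{aprop1}, which produces an explicit presentation $\mathcal{H}$ from a presentation $\mathcal{G}$ of $\mathcal{P}$, together with Remark \ref{rem35}(1), which guarantees that $\mathcal{H}$ is right-resolving whenever $\mathcal{G}$ is.

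Next I would apply Proposition \ref{pr22a}(2) to both path sets. Writing $\beta$ for the spectral radius of the adjacency matrix of a right-resolving presentation of $\mathcal{P}$ and $\beta'$ for that of the corresponding presentation $\mathcal{H}$ of $\mathcal{P}^{(*n)}$, this gives
\[
\dim_H(K) = \log_p \beta, \qquad \dim_H(K') = \log_p \beta'.
\]
By formula \eqref{top-entropy}, these spectral radii compute topological entropies: $\log \beta = H_{top}(\mathcal{P})$ and $\log \beta' = H_{top}(\mathcal{P}^{(*n)})$.

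Finally, I would invoke Proposition \ref{aprop3}, which asserts $H_{top}(\mathcal{P}^{(*n)}) = H_{top}(\mathcal{P})$. This forces $\beta = \beta'$, and hence $\dim_H(K) = \dim_H(K')$, completing the proof. There is no genuine obstacle here: the entire content of the corollary is that Hausdorff dimension of $p$-adic path set fractals is determined by the topological entropy of the underlying path set (via Proposition \ref{pr22a}), so the corollary is essentially a dictionary translation of Proposition \ref{aprop3} into the geometric language of $p$-adic fractals. The only mild point to be careful about is ensuring the presentations used to compute the two Perron eigenvalues are right-resolving, which is handled by Remark \ref{rem35}(1).
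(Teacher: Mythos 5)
Your proposal is correct and follows essentially the same route as the paper, whose proof simply cites formula \eqref{top-entropy}, Proposition \ref{aprop3}, and Proposition \ref{pr22a}; your additional care about right-resolving presentations (via Remark \ref{rem35}(1)) just makes explicit a point the paper leaves implicit.
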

\begin{proof} This follows immediately from (\ref{top-entropy}), Proposition ~\ref{aprop3}, and Proposition ~\ref{pr22a}.
\end{proof}


\begin{rem}\label{remark38}
(1) Corollary \ref{acor1} is useful in computing Hausdorff dimensions  of path sets in our examples.
Let $\mathcal{P}= X(1,4)$ be the Golden Mean Shift, which is also the path set underlying the $3$-adic path set fractal $\mathcal{C}(1,4)$. 
An element of $\mathcal{C}(1,N_k) = \mathcal{C}(1,(10^{k-1}1)_3)$ is any $3$-adic integer consisting of $0$'s and $1$'s and for which no $1$ is followed $k$ digits later by another $1$. Recognizing this property
allows us to see for $N_k = (1 0^{k-1}1)_3 = 3^k +1$
that the path set $X(1,N_k)$ underlying $\mathcal{C}(1,N_k)$ is just $\mathcal{P}^{(*k)}$.
Corollary \ref{acor1} provides another proof of a result in part I (\cite[Theorem 5.5]{AL14c}) asserting that 
$\dim_H(\mathcal{C}(1,N_k)) = \log_3 \phi$, since this now follows from the basic computation $\dim_H(\mathcal{C}(1,4)) = \log_3 \phi$. 
   One may compare
 this argument to the proof given in \cite[Theorem 5.5]{AL14c}. Let $\mathcal{G}$ be the presentation of $\mathcal{C}(1,4)$ given by 
 Algorithm A of \cite{AL14c}. The  algorithm of Proposition ~\ref{aprop1} applied to $k$ and $\mathcal{G}$  and Algorithm A of \cite{AL14c} give isomorphic graph presentations of $\mathcal{C}(1,N_k)$.

(2) In Section ~\ref{secqk} below, we will prove Theorem ~\ref{th111a}, which states that
 \[
\dim_H(\mathcal{C}(1,Q_k)) = \log_3 \phi,
\]
by a similar argument. 
\end{rem}

%
%
%

\section{The  infinite family $P_k= 2 \cdot 3^k +1= (20^{k-1}1)_3$}\label{sec4}

We obtain a relatively complete description of the path set structure for the
family $P_k = 2 \cdot 3^k +1 = (20^{k-1}1)_3$. As a preliminary we review results
for the infinite families $L_k$ and $N_k$ studied in part I (\cite[Section 4]{AL14c}).


%
\subsection{The Family $P_k = (20^{k-1}1)_3=2\cdot 3^k +1$: Path set structure.} \label{sec52}

We study the structure of a path set presentation of the $3$-adic expansions of
elements in $\mathcal{C}(1,P_k)$. The following example gives a path set presentation
for $P_2=19$.

\begin{exmp}\label{example33}
A path  set presentation of the path set $X(1, 19)$ associated to $\mathcal{C}(1,19)$, with $19 = (201)_3$,  is shown in 
 Figure \ref{fig41}. The vertex labeled $0$ is the marked initial vertex.

%
%
%

\begin{figure}[ht]\label{fig41}
 \centering
 \psset{unit=1pt}
 \begin{pspicture}(-100,-165)(100,160)
  \newcommand{\noden}[2]{\node{#1}{#2}{n}}
  \noden{0}{0,110}
  \noden{1}{50,55}
  \noden{10}{50,-55}
  \noden{100}{0,-110}
  \noden{22}{-50,-55}
  \noden{20}{-50,55}
  \noden{21}{0,-25}
  \noden{2}{0,25}
  \bcircle{n0}{0}{0}
  \bcircle{n100}{180}{1}
  \dline{n2}{n21}{1}{0}
  \bline{n0}{n20}{1}
  \bline{n20}{n22}{1}
  \bline{n22}{n100}{1}
  \bline{n100}{n10}{0}
  \aline{n20}{n2}{0}
  \bline{n1}{n0}{0}
  \aline{n10}{n21}{1}
  \bline{n10}{n1}{0}
 \end{pspicture}
 \newline
 \hskip 0.5in {\rm FIGURE 4.1.} Path set presentation of $X(1,19)$. The marked vertex is $0$.
\newline
\newline

\end{figure} 

\newpage

The graph in Figure \ref{fig41}
has adjacency matrix 
\begin{equation*}
\bf{A} = \left(\begin{array}{cccccccc}
1 & 1 & 0 & 0 & 0 & 0 & 0 & 0 \\
0 & 0 & 1 & 1 & 0 & 0 & 0 & 0 \\
0 & 0 & 0 & 0 & 1 & 0 & 0 & 0 \\
0 & 0 & 0 & 0 & 0 & 1 & 0 & 0 \\
0 & 0 & 0 & 0 & 1 & 0 & 1 & 0 \\
0 & 0 & 0 & 1 & 0 & 0 & 0 & 0 \\
0 & 0 & 0 & 0 & 0 & 1 & 0 & 1 \\
1 & 0 & 0 & 0 & 0 & 0 & 0 & 0 \\
\end{array}\right),
\end{equation*}
which has Perron eigenvalue $\beta \approx 1.465571$, so
\[\dim_H(\mathcal{C}(1,19)) = \log_3 \beta \approx 0.347934.\]

An important feature of the graph in Figure \ref{fig41} 
is that it is reducible with  two strongly connected components, 
one component being the $2$ nodes in the middle, and the other the ring of $6$ nodes
around the outside. The (oriented) dependency graph of the strongly connected
components is a  tree with $2$ nodes. 
The Perron eigenvalue $\beta$ of the graph above is associated
with the outer strongly connected component with $6$ nodes. The inner component
has topological entropy $0$.
\end{exmp}

We describe the path set presentation in general.
The vertex labels of the presentation will be described
using the following definition.

\begin{defn}\label{de40}
Classify the labels of the vertices in the graph $G_k$ as numbers $m$ with $0\le m \le 3^k$
whose finite $3$-adic expansions (read right to left) are of types (S1) and (S2) given by:
\begin{enumerate}
\item[(S1)] The expansion $(X)_3$, written with exactly $k$ digits, omits the digit $1$.
\item[(S2)] The $3$-adic expansion  of $m$ contains a single digit $1$,  
   and has the form $( X10^j)_3$ for some $0 \le j \le k$, with $(X10^j)_3$ 
   written with  exactly
   $k$ digits, plus $m = 3^k = (10^k)_3$.
\end{enumerate}
\end{defn}

Note that an (S2) label has initial $3$-adic digits consisting of a string
     of zeros, followed by a $1$.


  \begin{prop} \label{pr49}
   For $P_k= 2 \cdot 3^k +1$ the   path set $X(1, P_k)$ 
   associated to $\mathcal{C}(1, P_k)$ has a presentation $(\sG_k, v_0)$  
   with the following properties.  
   
   (1) The vertices  
   $v_m$ have  labels $m$  consisting of those  $0 \le m \le 3^k$ whose $3$-adic expansion $(m)_3$
   is one of the  two types (S1) and (S2) above. 
     
   (2) The underlying directed graph $G$ of $\sG_k$ has exactly $2^{k+1}$ vertices. 
   
   (3) The reflection map $R(m) = 3^k -m$ which acts on vertex labels of the underlying directed graph $G_k$
    is an  automorphism of $G_k$.  Given any path from $(0)_3$
    to vertex $m$, there is a directed path from vertex $(10^k)_3$ to vertex $3^k -m$ of the
    same length, visiting the set of reflected vertices of the
    original path, and having all the edge labels reversed (exchanging $0$ and $1$).
     \end{prop}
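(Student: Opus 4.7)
The plan is to construct an explicit right-resolving presentation by tracking an integer ``excess'' state and then identify the reachable states with the labels described by (S1) and (S2). Following Algorithm A of Part I \cite{AL14c}, after reading the first $j$ ternary digits of $x\in\Sigma_3$ and committing the first $j$ digits of $y=P_kx\in\Sigma_3$, the natural state is
\[
s_j := \frac{P_k\tilde{x}_j-\tilde{y}_j}{3^j},\qquad \tilde{x}_j=\sum_{i<j}x_i3^i,\quad \tilde{y}_j=\sum_{i<j}y_i3^i,
\]
a non-negative integer with $s_0=0$. The recursion $s_{j+1}=(s_j+P_kx_j-y_j)/3$ coupled with the admissibility constraint $y_j=(s_j+x_j)\bmod 3\in\{0,1\}$ yields deterministic transition rules depending only on $s\bmod 3$: when $s\equiv 0$, there are edges labeled $0$ and $1$ to $\lfloor s/3\rfloor$ and $\lfloor s/3\rfloor+2\cdot 3^{k-1}$; when $s\equiv 1$, a single edge labeled $0$ to $\lfloor s/3\rfloor$; when $s\equiv 2$, a single edge labeled $1$ to $\lfloor s/3\rfloor+1+2\cdot 3^{k-1}$.

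To establish (1) and (2), I would show that the set of states reachable from $v_0=0$ is exactly the label set $\mathcal{L}_k$ prescribed by (S1) and (S2). Closure of $\mathcal{L}_k$ under the transitions is a case analysis on the low ternary digit of $s$: adding $2\cdot 3^{k-1}$ deposits a fresh $2$ at position $k-1$, preserving the (S1) shape, while the $+1$ in the $s\equiv 2$ case triggers a carry that marches through the block of consecutive $2$'s in the shifted label until it meets the first $0$, producing a single $1$ with only $0$'s to its right (an (S2) label), or else rolling over the top and producing the exceptional $3^k$. A parallel analysis for (S2) labels and for $3^k$ completes closure. For reachability, reading the first $k$ digits $x_0,\ldots,x_{k-1}\in\{0,1\}$ from $s_0=0$ yields $s_k=2\tilde{x}_k$, exhausting all $2^k$ type-(S1) labels; every (S2) label then appears as the successor of a suitable (S1) predecessor via the carry mechanism, and $3^k$ appears as the successor of $(2^k)_3$ along the edge labeled $1$. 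Counting yields $2^k+2^k=2^{k+1}$, proving (2).

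For (3), the identity $3^k-m=(3^k-1-m)+1$ combined with the same carry analysis shows that $R(m)=3^k-m$ is a bijection $\mathcal{L}_k\to\mathcal{L}_k$ exchanging (S1) and (S2) (and mapping $0\leftrightarrow 3^k$): if $m\in$(S1) has $j$ trailing $0$'s with $d_j=2$, then $R(m)=(2-d_{k-1})\cdots(2-d_{j+1})\,1\,0^j\in$(S2). Compatibility of $R$ with the edge structure reduces to the two identities $R(s)\equiv -s\pmod 3$ and $\lfloor R(s)/3\rfloor=3^{k-1}-\lceil s/3\rceil$; a quick check of the three transition cases shows that every edge from $s$ to $s'$ with label $\ell$ induces an edge from $R(s)$ to $R(s')$ with label $1-\ell$, which is exactly the claimed vertex-by-vertex path correspondence (sending the initial vertex $0$ to $R(0)=3^k=(10^k)_3$).

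The main technical obstacle is the combinatorial bookkeeping of the carry cascades, which underpins both the closure of $\mathcal{L}_k$ in step (1) and the fact that $R$ swaps (S1) and (S2) in step (3). Fortunately both arguments reduce to the same elementary base-$3$ observation that adding $1$ to a maximal block of $2$'s turns it into a block of $0$'s capped by a single $1$, so the two case analyses will run in parallel.
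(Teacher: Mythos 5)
Your construction is essentially the paper's: the carry states $s_j$ you track are exactly the vertex labels produced by Algorithm A, your three transition rules agree with the label dynamics the paper uses (a $0$-edge sends $(Xs)_3$ to $(0X)_3$; a $1$-edge sends $(X0)_3$ to $(2X)_3$ and $(Y02^j)_3$ to $(2Y10^{j-1})_3$), and your overall plan --- closure of the (S1)/(S2) label set under the transitions, reachability, and verification that $R(m)=3^k-m$ reverses edge labels --- matches the paper's steps (G1)--(G6). Your arithmetic identities $R(s)\equiv -s \pmod 3$ and $\lfloor R(s)/3\rfloor = 3^{k-1}-\lceil s/3\rceil$, and your observation that the first $k$ input digits are unconstrained so that $s_k=2\tilde{x}_k$ sweeps out all $2^k$ type-(S1) states, are clean substitutes for the paper's string-based case analysis in (G3) and its explicit path construction in (G5).

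One step fails as stated, though: it is not true that every (S2) label is a direct successor of an (S1) label via the carry mechanism. Any $1$-edge (in particular the $s\equiv 2$ carry edge) lands on a label whose top ($(k-1)$-st position) digit is $2$, or on $3^k$; hence (S2) labels with leading digit $0$, such as $(0\,2\,1\,0)_3$ for $k=4$, and the labels $(0^a 1 0^j)_3$ containing no digit $2$ at all (including $(10^{k-1})_3$), have only (S2) predecessors (respectively the predecessor $3^k$), reached by $0$-edges. The repair is immediate with tools you already have: $(0^a 2W10^j)_3$ is reached from the (S1) vertex $(W02^{j+a+1})_3$ by the carry edge to $(2W10^{j+a})_3$ followed by $a$ zero-edges, and $(0^a10^j)_3$ is reached from $3^k$ (itself the $1$-successor of $(2^k)_3$) by $a+1$ zero-edges; alternatively, once your part (3) is in place and one notes that $3^k$ is reachable from $0$, invariance of the reachable set under $R$ yields (S2)-reachability from (S1)-reachability, which is exactly the paper's route via its steps (G3)--(G4). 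With that patch your argument is complete and coincides in substance with the paper's proof.
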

     
     \begin{proof}
     
     The presentation found in this theorem will be that given by the construction of Algorithm A in part I \cite{AL14c}. 
    
     From the proof of Theorem \ref{thm37} we know that a vertex with label $m=3^k$ is reachable
     by a directed path  from 
     vertex $m=0$ and vice-versa.       
     
     We prove the proposition by showing, in order:
     \begin{enumerate}
     \item[(G1)]
     The vertices of $G$ reachable from $v_0$ have labels $0 \le m \le 3^{k}$ which are a 
     subset of  the labels (S1) and (S2).
      \item[(G2)]
      The set of vertex labels $m$ satisfying (S1) or (S2) are
      exchanged under the reflection map $R (m) = 3^k -m$. 
      The set of all  possible $m$ satisfying (S1), respectively  (S2), each  have
     cardinality $2^k$.
     \item[(G3)]
     Each  path emanating from vertex $m=0$ corresponds to a unique path emanating from 
     vertex $m=3^k$
     with the new path having reflected vertex labels and reversed edge labels,
     and vice versa. 
     \item[(G4)]
     The set of all reachable vertices is invariant under
     the reflection map.
     \item[(G5)]
     All  vertices with labels of type (S1) are reachable.  
     \item[(G6)]
     The reflection map on vertices induces  a graph automorphism of $G$ of order $2$
     with no fixed points. Thus $G$ is a double cover of the resulting quotient graph $H$.
          \end{enumerate}
      
      
       To  establish (G1) we proceed by induction on the length $n$ of a shortest path to a given vertex.
      The base case $m=0$ is an (S1) label.
      Following a single  $0$ edge changes a vertex label $(X s)_3$ (with $s=0, 1,$)
      to $(0X)_3$, which maps (S1) labels to (S1) labels and maps (S2) labels to (S2) labels,
      except the case $d=1$ is mapped to an (S1) label. Following a single $1$ edge with vertex
      label $(X s)_3$ (here $s=0, 2$) maps labels having  $s=0$ to $(2X)_3$, which preserves
      the property of being an (S1) label or an (S2) label. For the case $s=2$, which must be
      an (S1) label, rewrite $(Xs)_3 = (Y0 2^j)_3$ for some $j \ge 1$, which is converted to
      $(2Y10^{j-1})_3$, which is an (S2) label. The extreme case $(Xs) = (2^k)_2$ is converted to
      $m=3^k$, in (S2). This completes the induction step.
      
      (G2) There are clearly $2^k$ elements in (S1). The reflection map $R$ acts on elements $m$ of
      (S1) with $m>0$ by replacing each $0$ by $2$ and vice versa, except that the smallest $2$ is converted
      to a $1$, and this is an element of (S2).  The remaining element $m=0$ exchanges with $m=3^k$ 
      which is in (S2).  Conversely elements of (S2) are mapped into elements of (S1), for $m< 3^k$ an  expression
      $10^j$ is converted to $02^j$, and for $m=3^k$ is sent to $m=0$.  Since the reflection map is 
      an involution, it is one to one, so the (S2) labels have
      the same cardinality $2^k$ as (S1) labels.
      
      (G3) This assertion is proved by induction on the length of the path. It is vacuously true at step $0$. 
      For the induction step we  must check that the vertices $m$ and $2^k-m$ have the same 
      number of exit edges, and that the available exit edges have reversed labels in
      the second case. We must also check that following an edge in the two cases leads to
      a pair of reflected vertex labels $m'$ and $3^k -m'$. There are several cases.
      \begin{enumerate}
      \item[Case (1)]
      If $m = (X20^{\ell})_3$ for $\ell >0$ of type (S1), then $3^k - m = (\bar{X}10^{\ell})_3$ is of type (S2).
      Both allow $0$, $1$ exit edges. A $0$ exit edge from $m$ goes to $m'=(0X02^{\ell-1})_3$, and
      a $1$ exit edge for $3^k-m$ goes to $(2\bar{X}10^{\ell-1})_3 = 3^k - m'$. A $1$ exit edge
      from $m$ goes to $m''= (2X20^{\ell -1})_3$, and a $0$ exit edge for $3^k -m $ goes to 
      $(0 \bar{X}10^{\ell -1}) = 3^k - m''$.
      \item[Case (2)]
      If $m = (X02^{\ell})_3$ for $\ell>0$ of type (S1), then $3^k -m = (\bar{X}20^{\ell -1} 1)_3$ is of type (S2).
      Here $m$ allows only a $1$ exit edge, while $3^k-m$ allows only a $0$ exit edge. 
      Under the allowed $1$ exit edge $m$ goes to $m'=(2X10^{\ell -1})_3$ of type  (S2). Under the allowed
      $0$ exit edge $3^k-m$ goes to $(0 \bar{X} 2 0^{\ell -1})_3 = 3^k -m'$ of type (S1).
      \end{enumerate}
      For the two further cases where $m$ is of type (S2), reverse the above. This completes the induction step.

      (G4) By (G3) if a vertex labeled $m$ is reachable from $(0)_3$, then its reflected vertex
      $3^k -m$ is reachable from vertex $3^k$. But vertex $3^k$ is reachable from $(0)_3$
      so $3^k -m$ is reachable from $(0)_3$ as well.
      
      (G5)  We may assume that the  (S1) vertex $m \ne 0$,
      so it has the form $0^{r_0} 2^{r_1} 0^{r_2}\cdots 2^{r_j}$, in which all
      $r_i >0$ except possibly $r_0$ and $r_j$, and $r_0 + r_1 + \cdots + r_j =k$.
      Now it may be realized following a directed path from $(0)_3$
      having successive edge labels
      $1^{r_j}, 0^{r_{j-1}}, 1^{r_{j-2}}, \cdots, 0^{r_0}$. This path is legal,
      because  all intermediate words
      in the path have initial $3$-adic digit $0$ so both edges labeled $0$ and $1$
      exit from that vertex. (The intial word has $k$ initial zeros, and each step can
      decrement the number of leading zeros by  at most $1$). 
      
      (G6) One first checks that each label $m$  in (S1) ending in $0$ corresponds under reflection
      to a label $3^k-m$  in (S2) ending in $0$ and vice versa (since $3$ divides $m$). Each label in (S1) ending 
      in $2$ corresponds under reflection to a label in (S2) ending in $1$; the (S1) label permits only a single
      exit edge with label $1$ and the corresponding (S2) label has a single exit edge labeled $0$.
      Thus at each vertex the reflection automorphism (at the level of vertex labels) preserves 
      the number of edges and reverses their edge labels.  This establishes (G6).
      Moreover the graph G is a double cover of the quotient graph $H$ under the automorphism $R$
      (which has no fixed points).
          \end{proof}

Our next object is to show that  the underlying graph $G_k$ of the path set $X(1,N_k)$
has at least $\lceil \frac{k+1}{2} \rceil$ nested connected components, 
a number which is unbounded as $k \to \infty$.
We establish this  using the following notion  of depth to vertices of  $G_k$.

\begin{defn}\label{de410} 
(1) First we classify  the labels of the vertices in graph $G_k$ as being of types (T1) and (T2) 
as follows:
\begin{enumerate}
\item[(T1)] The $k$-th  $3$-adic digit of $m$ is $0$ or $1$, so $m=(0X)_3$ or $m= (1X)_3$,
with $X$ containing $k-1$ digits,  but excluding the label $m=3^k= (10^k)_3$.
\item
[(T2)] The $k$-th $3$-adic digit of $m$ is $2$, i.e. $m=(2X)_3$, as above, in addition 
including the label $m=3^k= (10^k)_3$.
\end{enumerate}
One may check that there are $2^k$ elements in each set, and that the reflection operation
$R(m)= 3^k -m$ 
sends (T2) labels to (T1) labels and vice versa.\medskip

(2) The {\em depth} of a (T1) label is the number of blocks of consecutive $2$'s
appearing in its $3$-adic expansion. The {\em depth} of  a (T2) label $m$ is the depth of its reflected label
$R(m)$, which is of type (T1).
\end{defn}

Thus $m=0$ and $m=3^k$ are assigned depth $0$. Furthermore all the vertices in the path of
length $2k+2$ studied in the proof of Theorem  \ref{thm37}  are assigned depth $0$,
and they are the complete set of depth $0$ vertices.

The following proposition will establish  that this notion of depth stratifies the strongly
connected components, by showing depth is nondecreasing along each
directed edge. 

  \begin{prop} \label{pr410a}
   For $P_k= 2 \cdot 3^k +1$ the   path set $X(1, P_k)$  has  presentation $(\sG_k, v_0)$  with the following properties.
   
   (1) Each step along an edge in the graph $G_k$  leaves the same or increases the depth of  a vertex.

   (2) For $0 \le j  \le \lfloor k/2\rfloor$
   there are exactly $2 {{k+1}\choose{2j+1}}$ vertices in $\sG_k$ of depth exactly $j$.

   (3)  For each $0 \leq j \leq \lfloor \frac{k}{2} \rfloor$, the vertices of depth $j$ form a strongly connected component of the underlying directed graph $G_k$. Thus, $G_k$ has a sequence of  $1+ \lfloor k/2 \rfloor$  strongly connected components, which are
   nested in a chain.
        \end{prop}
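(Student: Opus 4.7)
The plan is to prove the three parts in sequence.

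For part (1), I will reduce to analyzing outgoing edges from (T1) vertices by invoking the reflection automorphism $R(m) = 3^k - m$ of Proposition 4.2(3): $R$ interchanges (T1) and (T2) vertices, maps each edge $m \to m'$ labeled $\ell$ to an edge $R(m) \to R(m')$ labeled $1 - \ell$, and preserves depth (since the depth of a (T2) vertex is defined as that of its reflection). For a (T1) source $m = (Xs)_3$, the $0$-edge rule $(Xs)_3 \to (0X)_3$ (valid for $s \in \{0,1\}$) yields a (T1) target whose $2$-block count equals that of $X$, which also equals the $2$-block count of $(Xs)_3$ since $s \in \{0,1\}$ cannot extend a $2$-block; hence the depth is preserved. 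For the $1$-edge, the generic transition is $(X0)_3 \to (2X)_3$, with a special rewrite when the rightmost digit is $2$ and the exceptional case $(2^k)_3 \to 3^k$. In the generic case the target has leading $2$ and is (T2), with depth equal to the number of $2$-blocks in the $k$-digit form of $R((2X)_3) = 3^{k-1} - (X)_3$. A digit-level analysis of the subtraction $3^{k-1} - (X)_3$, tracking how the borrow propagates from the lowest nonzero position of $X$, will show that this $2$-block count is at least the $2$-block count of $X$. The other cases are handled analogously. The main technical obstacle here will be the case analysis for sources of type (S2), where the single $1$ digit complicates the reflection, but the same conclusion holds.

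For part (2), I count (T1) vertices of depth $j$ and double by reflection. Partition the (T1) vertices into three classes: (i) (S1) labels with leading $0$, which are $\{0,2\}$-strings of length $k$ starting with $0$; (ii) (S2) labels with leading $0$, parametrized by $j' \in \{0,\ldots,k-2\}$ and $Y \in \{0,2\}^{k-2-j'}$ giving $(0Y10^{j'})_3$; and (iii) the unique (S2) label with leading $1$, namely $3^{k-1}$. Using the classical count $\binom{n+1}{2j}$ for the number of binary strings of length $n$ with exactly $j$ maximal $1$-runs, class (i) contributes $\binom{k}{2j}$ depth-$j$ labels; summing over $j'$ in class (ii) via the hockey-stick identity $\sum_{i=2j}^{k-1} \binom{i}{2j} = \binom{k}{2j+1}$ gives $\binom{k}{2j+1}$ depth-$j$ labels, after combining with class (iii) to handle the $j=0$ boundary. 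Adding and applying Pascal's identity yields $\binom{k}{2j} + \binom{k}{2j+1} = \binom{k+1}{2j+1}$ depth-$j$ labels in (T1); doubling gives the stated count $2\binom{k+1}{2j+1}$.

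For part (3), Part (1) immediately implies that every strongly connected component of $G_k$ is contained in a single depth stratum, so the strata form a linearly ordered chain. It remains to verify that each depth stratum is itself strongly connected. For depth $0$, I will exhibit the cycle of length $2k+2$ consisting of $1$-edges $0 \to (20^{k-1})_3 \to (2^20^{k-2})_3 \to \cdots \to (2^k)_3 \to 3^k$ followed by $0$-edges $3^k \to 3^{k-1} \to \cdots \to 3 \to 1 \to 0$; all visited vertices are easily seen to have depth $0$, and by Part (2) the cycle passes through exactly the $2(k+1)$ depth-$0$ vertices. For deeper strata $j \ge 1$, I expect to construct analogous cycles of length $2\binom{k+1}{2j+1}$ through the depth-$j$ vertices, either by an inductive scheme reducing the depth-$j$ problem in $G_k$ to a depth-$(j-1)$ problem in a smaller graph (reflecting the $\lfloor k/2\rfloor$-growth of the number of strata) or by a direct bijective description; the examples $k=2,3$ confirm such cycles exist. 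This is the hardest step of the argument, and the main obstacle will be producing a uniform construction of generating cycles at each depth. Finally, the nested chain structure of the SCCs follows because edges only point to strata of equal or greater depth.
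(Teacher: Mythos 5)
Your overall strategy mirrors the paper's (reduce to type (T1) vertices via the reflection automorphism, count (T1) labels and double, stratify by depth), and your part (2) count is correct: the split into (S1)-with-leading-$0$, (S2)-with-leading-$0$, and the single label $3^{k-1}$, giving $\binom{k}{2j}+\binom{k}{2j+1}=\binom{k+1}{2j+1}$, is a clean variant of the paper's marker argument. But two pieces of your proposal are not proofs. In part (1) the only nontrivial case is the $1$-edge out of a (T1) vertex, and you defer exactly that: ``a digit-level analysis of the subtraction \dots will show that this $2$-block count is at least the $2$-block count of $X$.'' This is where all the work lies; the paper handles it by an explicit case analysis on whether the relevant substring begins/ends in $0$ or $2$ (the reflection can destroy a $2$-block of $X$, and one must check that the leading $02\cdots$ or the block shifted into the trailing positions always compensates). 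Until those cases are written out, part (1) is an announced plan, not an argument.

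The more serious gap is part (3). Showing that each depth-$j$ stratum is strongly connected for $j\ge 1$ is the heart of the proposition, and you explicitly leave it open (``I expect to construct analogous cycles \dots the main obstacle will be producing a uniform construction''). Moreover, the route you sketch — a single cycle of length exactly $2\binom{k+1}{2j+1}$ passing through every depth-$j$ vertex — asks for a Hamiltonian cycle in each stratum, which is strictly stronger than strong connectivity and is not known to exist beyond depth $0$; small examples do not give a uniform construction. The paper avoids this by a different mechanism: it shows (A) from any non-minimal depth-$d$ label one can move forward within the stratum to a strictly smaller label, hence forward to the minimal label $m_{\min}=((20)^{d-1}2)_3$, and (B)--(C) one can traverse edges backwards within the stratum from any depth-$d$ vertex to $m_{\min}$, using an explicit ``scrolling'' algorithm that normalizes the blocks of $2$'s and $0$'s. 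Something of this nature (or another complete substitute) is required; as it stands your part (3) is an unproved conjecture about cycle structure. Finally, to get the nested-chain statement you also need to exhibit, for each $j$, a path from the depth-$0$ component into depth $j$ (the paper uses the paths labeled $(10)^j$ from the $0$-vertex); monotonicity of depth alone does not give the chain.
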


\begin{proof}
The presentation found in this theorem will be that given by the construction of Algorithm A in part I \cite{AL14c}. Some of the notation below only makes sense for $k >3$. We will restrict to these cases, as the result follows for $k=1,2,3$ by direct inspection.
The reversal operation exchanges type (T1) and type (T2) labels. For this to work 
the top $3$-adic digit (the $k$-th digit) must be used, because this is the only digit always reversed
under the reflection map or with $2$ changed to $1$; there is one exception, which is $m=0$ and $m=3^k$, where we
assigned them to (T1) and (T2) directly. The key point is: {\em a label $m$ and its reversal are always at the same
level.}  For the two exceptions $m=0$ and $m=3^k$ this fact had to be checked directly. 
 
 (1) It suffices to check the effect of traversing a single edge in $\sG_k$. 
 The assertion holds for  cases $m=0$ and $m= 3^k$  because
 they both exit to level $0$ vertices.  By the proof of (G3) in Proposition \ref{pr49}, if label $m$ goes to $m'$ by
 edge labeled $s$, then $3^k-m$ goes to $3^k - m'$ by an edge labeled $\bar{s}$. 
 Now the depths of $m$ and $3^k -m$ are  the same, as are those of $m'$ and $3^k -m'$, so it
 suffices to check the effect of following an edge from a vertex of type (T1).  We treat cases.
 
 \begin{enumerate}
      \item[(i)]
      Suppose $m = (0X0)_3$ of type (T1) has depth $d$, thus $X$ contains   
      $d$  blocks of consecutive $2$'s. Following a $0$ edge goes to $m'= (00X)_3$,
      also (T1) of depth $d$. 
      \item[(ii)]
      Suppose $m = (0X0)_3$ of type (T1) has depth $d$, thus it has 
      $d$  blocks of consecutive $2$'s. Following a $1$ edge goes to $m'= (20X)_3$,
      now (T2), of depth same as $3^k-m'$. Now  $X= X'2 0^{\ell}$ with $\ell \ge 0$ or
       $X =0^{\ell}$. In the first case
      $3^k - m' = (02 \bar{X'} 10^{\ell})_3$  If $X' =0X'' 0$, then it has $d-1$ blocks of $2's$, but its
      reversal $\bar{X}$ has $d$ blocks.  If $X'=2X''0$ then it has $d-1$ blocks of $2's$, as does
      its reversal, but the $02$ at front creates another block. If $X'=0X'2$ then it has $d$ blocks
      of $2$'s, as does its reversal. Finally if $X'=2X'2$ then it has $d$ blocks of $2$'s, its reversal
      has $d-1$ blocks, but the $02$ at front creats another blocks.  In all cases the depth cannot 
      decrease.
     \item[(iii)]
       Suppose $m =(0X02^{\ell})_3$ with $\ell>0$ of type (T1) has depth $d$.  Now can only follow a $1$ edge,
       go to $m'= (2 0X 10^{\ell -1})_3$ is of type (T2). This has same depth as 
       $3^k-m' = (02 \bar{X} 2 0^{\ell -1})_3$. Now $X$ has $d-1$ blocks of $2$'s. If it is of form $0 X''0$ then
       reversal increases number of blocks of $2$'s in it by $1$, compensating exactly for the lost $2$ block at the right end of the label, so the depth is still $d$. If of form $2X''0$ or $ 0 X''2$
       then reversal leaves $d-1$ blocks of $2$'s but get one extra block from either $2$ before or after, so the depth is still $d$.
       If of form $2 X'' 2$ then reversal leaves $d-2$ blocks of $2$'s but now gain two extra blocks from
       the $2$ before and after, so the depth is still $d$.
      \end{enumerate}
      In all cases of  a type (T1) vertex a step leaves depth the same or increases it by $1$.

(2) Let $k$ be fixed. The 
result is true for $j=0$ by the construction in Theorem \ref{thm37}, where there are $2k+2 = 2{{k+1}\choose{1}}$
vertices of depth $0$, and this component is strongly connected. 

For $j \ge 1$ it  suffices to count the number of labels of type (T1) at depth $j$ and then double it.
For $j \ge 1$ the  number of labels of type $(T1)$ at depth $j$ consist of all labels of form
$(0^{k_1}2^{\ell_1} 0^{k_2} 2^{\ell_2} \cdots 0^{k_j} 2^{\ell_j} 0^{k_{j+1}}X)_3$
with final block $X= \emptyset$ (set $k_{j+2}=0$) or $X=(1 0^{k_{j+2} -1})$ (the latter requires $k_{j+2} \ge 1$).
Since labels have length $k$ the  exponents necessarily satisfy
\[
k_1+ \cdots  + k_{j+1} +k_{j+2} + \ell_1 + \cdots + \ell_j =k, ~~ \, k_i, \ell_i >0 \, \mbox{for} \, 1 \le i \le j; k_{j+1}, k_{j+2} \ge 0.
\]
There are ${{k}\choose{2j}}$ solutions of depth $j$ type $(T1)$ with $X$ not containing a $1$; this follows since
there are $k$ symblols in a label and  we mark the final elements of each
$0^{k_i}$ and $2^{k_i}$ with an asterisk for $1 \le i \le j$ to uniquely determine a depth $j$ label with $X =\emptyset$.
There are  ${{k}\choose{2j+1}}$ solutions of depth $j$ type $(T1)$ with $X$ containing a $1$; here we add
an additional asterisk marking the $1$, which unqiuely specifies the label,
so we have the number of ways of inserting $2j+1$ asterisks.   Thus the  number of $(T1)$
labels  of depth $j$ is  ${{k+1}\choose{2j+1}}$,
and (2) follows.

(3) First, we show that it is possible to reach a vertex of each depth $0 \leq j \leq \lfloor k/2 \rfloor$. Starting from $m=0$ following paths with labels $(10)^j$ for $1 \le j \le\lfloor k/2\rfloor$,
one arrives at vertices $m_{2j} := ( (02)^j 0^{k-2j})_3$, and $m_{2j}$ is a type (T1) label of depth $j$.
These are legal paths since all the intermediate vertex $m_j$ labels (for $1 \le j \le m-1$) have initial $3$-adic
digit $0$. We have produced a path with vertices of depth $0, 1, 2, ..., \lfloor k/2\rfloor$, which
guarantees the existence of at least one  sequence of distinct strongly connected components of length
$1+ \lfloor k/2\rfloor$ which are nested in a chain. 

Next, we show that the subgraph of $G_k$ consisting of those vertices of depth $j$ is strongly connected for each $0 \leq j \leq \lfloor k/2 \rfloor$. At depth $d=0$, beginning at the vertext labeled $0$ and traversing a path with label $1^{k+1} 0^{k+1}$ gives a loop at the $0$-vertex that passes through each other vertex of depth $0$, so the subgraph of depth $0$ vertices is strongly connected.. Below, we restrict attention to depths $d \geq 1$, and some statements below only apply in those cases. Recall also that we are restricting attention to $k > 3$, as smaller cases can be checked by hand.

We need to show, firstly, that from any vertex it is always possible to traverse an edge that leaves the depth unchanged. By the proof of (G3) in Proposition ~\ref{pr49} and the discussion in the first paragraph of (1) above, it suffices to verify this for vertices of type (T1). Let $m$ be the label of a vertex of depth $d$ and type (T1). Then either $m = (0X0)_3$, in which case we may follow an edge labeled $0$ to arrive at a vertex labeled $(00X)_3$ that also has depth $d$, or else $m = (0X02^l)_3$ for some $l > 0$. In the latter case, we may follow an edge labeled $1$ to a vertex labeled $(20X10^{l-1})_3$, and the discussion in (iii) above shows that this vertex also has depth $d$. In any case, we can always traverse an edge that will leave the depth unchanged.

Among depth $d$ labels, the minimal such label is $m_{min} = ((20)^{d-1}2)_3$. In order to show that the set of depth $d$ vertices is a strongly connected subgraph of $\mathcal{G}_k$, it suffices to show that it is always possible, beginning at any vertex of depth $d$, to traverse paths both \emph{forwards} to $m_{min}$ and \emph{backwards} to the same vertex (that is, contrary to the ordinary direction that arrows are traversed; this will show that there is a path forwards from $m_{min}$ to the desired vertex). This will follow if we can show that:
\begin{enumerate}[(A)]
\item For any depth $d$ vertex with non-minimal label $m$, it is always possible to follow a path, staying at depth $d$, to another vertex with label $m' < m$.
\item For any depth $d$ vertex, it is possible to follow edges $backwards$ until we reach a vertex where each block of $2$'s has length exactly $1$.
\item For any depth $d$ vertex with a label where each block of $2$'s has length exactly $1$, it is possible to reach $m_{min}$ by going backwards.
\end{enumerate}

(A) Suppose now we are at a depth $d$ vertex with label $m$ of type (T1). Then either $m$ is of the form $(0X0)_3$, or else $m$ is of the form $(0X02^l)_3$ for some $l >0$. If $m = (0X0)_3$, then we may traverse an edge labeled $0$ to arrive at an edge labeled $m' = (0X)_3 < m$, and $m'$ is also at depth $d$. Now suppose instead that $m = (0X02^l)_3$. Then we must traverse next an edge labeled $1$ to the vertex with label $m' = (20X10^{l-1})_3 > m$. By the argument of (iii) above, this vertex also has depth $d$. From here, we may traverse $l$ consecutive edges labeled $0$ to arrive at a vertex labeled  $m'' = (20X)_3$, whose depth is also $d$. If the right-most digit of $X$ is not a $2$, we may continue to traverse edges labeled $0$ until we arrive at a vertex $m''' = (20Y)_3$ where the right-most digit of $Y$ is a $2$, and the length $|Y| \leq |X|$, or else at the vertex $m^{(4)} = (2)_3$ if $X$ is the empty string. In the latter case, we are at depth $d=1$ and $m^{(4)} = (2)_3 = m_{min}$ is already the minimal label. Suppose we are in the former case, and we have arrived at $m''' = (20Y)_3$. But for any $l \geq 1$, we necessarily have $m ''' =(20Y)_3 \leq (X02^l)_3 =m$, with equality if and only if $X = Y$, $l=1$, and $m = m' = (20)^{d-1}2 = m_{min}$. Thus, in any case, we may always traverse a path, remaining at depth $d$, to arrive at a vertex whose label is less than $m$.

What if our initial vertex is of type (T2)? Then, $m$ is either of the form $10^k$, in which case, we simply follow edges labeled $1$ until we reach the vertex labeled $0$, or we have something of the form $2X$, where $X$ has $k-1$ digits. In this case, if $X$ terminates in $10^l$, we can immediately follow a vertex $0$, without dropping depths, to $m'$ of form $(T1)$, where of course $m'< m$. Otherwise, we have $2Y20^l$, where we follow $l + 1$ edges of label $1$; the first $l$ bring us to $2Z2$, and the $(l+1)$st edge takes us to a (T2) vertex that terminates in $10^n$, which is a case already covered.

This proves (A).

To see (B), we will devise an algorithm (call it Algorithm (B)).
\begin{enumerate}[(i)]
\item If we are at $2X10^l$ then we follow a vertex labeled 1
backwards to vertex $X02^{l+1}$. (This does not drop depth, as a block
of consecutive $2$'s necessarily transforms into another block of
consecutive $2$'s).
\item If we are at $0^lX$, where $l > 1$, or we are at $0^lY10^n$, where
$l>0$, we follow a vertex labelled $0$ to $0^{l-1}X$ or $0^{l-1}Y10^{n+1}$.
\item If we are at $02X$, and $X$ omits the digit $1$, we follow an edge
labeled $0$ back to $2X1$. Notice that this avoids dropping depth.
\item If we are $2X$, where $X$ omits the digit $1$, we follow an edge
labeled $1$ back to $X0$.
\end{enumerate}

The crux is step (iii); following the notation of that step, we will then be at $2X1$, with no $0$s after the $1$. We then apply case (i), reaching $X02$. Any other $2$'s that appeared in the block at the far left will be transformed into $0$'s on the far right by the application of step $(iv)$, while the other blocks will merely be shifted.

Thereby, by repeated application of this algorithm, all of the blocks will be transformed into single-digit blocks after at most $k$ iterations. This concludes (B). For an illustration at depth 2, see the column labeled ``Step (B)" in Table 4.1.

Finally, for (C), notice that, for the type of vertex we are interested in, repeated application of Algorithm (B) simply "scrolls through" the label, with the blocks of $2$'s shifting left, always preserving the same cyclic order, with the same gaps of $0$'s between them (unless a $1$ is present) between them. In the case of the illustration of Table 4.1, see the column labeled ``Step (C)-1" of that table.

So, for (C), apply Algorithm (B) until we are at $0^lX2$ where $l > 1$ (if this is strictly impossible, then simply "scroll" until we are at $(02)^{k/2}$, and at this depth, that is the minimal vertex). Then, break the pattern and go to $0^lX21$. Then, continue to apply Algorithm (B) until we return to a vertex where all of the blocks of $2$'s have length $1$.

Essentially, we will generate a long block of $2$'s instead of the block of $0$'s we currently have, which won't have such a large gap; see the column labeled ``Step (C)-2" in Table 4.1.

One such procedure transforms a block of $0$'s of arbitrary length into a block of length $1$.

Repeat this procedure untill all of the blocks of $0$'s (except for 1) have length $1$, and then use Algorithm (B) until we reach the minimal vertex. This completes (3). Continuing with our simple example, see the column labeled ``Step (C)-3" in Table 4.1.

%
%

\begin{minipage}{\linewidth}
\begin{center}
\begin{tabular}{ |r|r|r|r| }
\hline
\mbox{Step (B)} &  \mbox{Step (C)-1} & \mbox{Step (C)-2} & \mbox{Step (C)-3} \\ 
\hline
22022022 & 0020002 & 0020002 & 0002020 \\
20220220 & 0200020 & 0200021 & 0020200 \\
02202200 & 2000201 & 2000210 & 0202000 \\
22002201 & 0002002 & 0002022 & 2020001 \\
20022002 & 0020020 & 0020220 & 0200002 \\
00220020 & 0200200 & 0202200 & 2000021 \\
02200200 & 2002001 & 2022001 & 0000202 \\
22002001 & 0020002 & 0220002 &  \\
20020002 &  & 2200021 &  \\
00200020 &  & 2000202  &  \\
 &  & 0002020 & \\
 \hline
\end{tabular} \par
\bigskip
\hskip 0.5in {\rm TABLE 4.1.} Example of algorithm for proof of Proposition 4.3(3). 
\newline
\newline
\end{center}
\end{minipage}
\end{proof}


\begin{rem} \label{rem45}
(1) Proposition \ref{pr410a}  counts the number of vertices at each depth,
giving a recursion to compute them.
Table 4.2 below  gives values  for $1 \le k \le 9$.\\

%
%

\begin{minipage}{\linewidth}
\begin{center}
\begin{tabular}{ |l l |r|r| r | r | r | r | r | }
\hline
  $~~~$ &\mbox{Depth=} &  $0$ &  $1$ & $2$ & $3$ & $4$ \\ 
\hline
 $P_1 =7$ & &  $4$ & $$  & $$ & $$ & $$ \\
$P_2 =19$ & &  $6$ & $2$  & $$ & $$ & $$ \\
$P_3= 55$ & & $8$ & $8$  & $$ & $$ & $$ \\
$P_4= 163$ &&  $10$ & $20$  & $2$ & $$ & $$ \\
$P_5= 487$ & & $12$ & $40$  & $12$ & $$ & $$ \\
$P_6=1459$ & &  $14$ & $70$  & $42$ & $2$ & $$ \\
$P_7=4375$ & &$16$ & $112$  & $112$ & $16$ & $$ \\
$P_8=13123$ &&  $18$ & $168$  & $252$ & $72$ & $2$ \\
$P_9=39367$ & & $20$ & $240$  & $504$ & $240$ & $20$ \\ \hline
\end{tabular} \par
\bigskip
\hskip 0.5in {\rm TABLE 4.2.}  Number of vertices at given depth in graph $\sG_k$ for $X(1, P_k)$. 
\newline
\newline
\end{center}
\end{minipage}

(2)
Proposition \ref{pr410a} says that the graph $X(1, P_k)$ has a ``Matryoshka doll" structure of
a single set of nested strongly connected components, one at each depth $0 \leq j \leq \lfloor k/2 \rfloor$. 

(3) 
The proof of Proposition \ref{pr410a} exploits repeatedly the symmetry of the graph $G_k$ exhibited by the partitioning of vertices into types (T1) and (T2).
\end{rem}

%
%
%
%
%
%
\subsection{The Family $P_k = (20^{k-1}1)_3=2\cdot 3^k +1$: Hausdorff dimension.} \label{sec53}

Data on the  Hausdorff dimensions of the first few of the sets $\mathcal{C}(1, P_k)$ 
were obtained by computer calculation of the maximum eigenvalue of  the 
adjacency matrix of the
graph $X(1, P_k)$ and presented in Section \ref{sec21}. The data contained oscillations
and other features which we discuss in Remark \ref{rem47} below.

We now lower bound the Hausdorff dimension of $\mathcal{C}(1, P_k)$ as $k \to \infty$.
Theorem \ref{th109c} gives  both an asymptotic limiting result and a lower bound because it may be that the 
Hausdorff dimensions continue to oscillate for large $k$.

%
%
\begin{proof}[Proof of Theorem \ref{th109c}]
Let $a = \lfloor \frac{k}{4} \rfloor$ and let $b \in \{0,1,2,3\}$ be congruent to $k$ mod $4$, so that $k = 4a + b$. Let $S \subset \mathcal{A}^\mathbb{N} = \{0,1,2\}^\mathbb{N}$ be given by 
\begin{equation} \label{defS} S = \{(1100)^a0^b((1x00)^a0^b(1000)^{a-1}1000^b)^\infty \in \mathcal{A}^\mathbb{N} | x \in \{0,1\} \text{ may vary}\}.
\end{equation}
What we will show is that $S \subset X(1,P_k)$. Since elements of $S$, after the fixed initial string $(1100)^a 0^b$, consists of symbol sequences of length $2k-1$ with $2k-1-a$ fixed digits and $a$ digits which may be either $0$ or $1$, it follows that 
\[H_{top} (S) = \frac{a}{2k-1} \log_3(2) = \frac{\lfloor \frac{k}{4} \rfloor}{2k-1} \log_3(2).
\]
The two inequalities of the theorem, that
\[
\liminf_{k \to \infty} \dim_{H} \mathcal{C} (1, P_k) \ge  \frac{1}{8} \log_3(2),
\]
and, for all $k$, 
\[\dim_H(\mathcal{C}(1,P_k)) \geq \frac{1}{13} \log_3(2),
\]
then will follow immediately.

To prove that $S \subset X(1,P_k)$, we will trace out paths on the graph presentation of $\mathcal{C}(1,P_k)$ given by Algorithm A of \cite{AL14c} whose edge labels give the  elements of $S$. First, note that if we begin with an edge labeled $1$ from the $0$-vertex, we arrive at the vertex with label $20^{k-1}$. This means that our next $k-1$ vertices may be either $0$ or $1$ freely. Each edge $0$ appends a $0$ to the front of the vertex label and removes the last digit, and each edge $1$ appends a $2$ to the front of the vertex label and removes the last digit. From these observations, we see that there is in fact a sequence of edges with label $(1100)^a0^b$, and having traversed these edges we arrive at a vertex labeled $0^b(0022)^a$. Call this vertex $v$. 

We will now show that we may traverse a sequence of edges with label \newline $(1x00)^a0^b(1000)^{a-1}1000^b$ initiating at $v$ for $x=0$ and $x=1$, and that such a path also terminates at $v$. The result will follow. Now since the label of $v$ ends in $2$, the only out edge is indeed labeled $1$, and this takes us to a vertex labeled $20^b(0022)^{a-1}010$. The next edge label $x$ may then be either $0$ of $1$, terminating in a vertex labeled $[2x]20^b(0022)^{a-1}01$, where $[2x]$ is a digit given by the product of $2$ and $x$. From this vertex we may traverse two subsequent edges each labeled $0$, and the target vertex is $00[2x]20^b(0022)^{a-1}$. It is easy to see that we may repeat this process, traversing edges labeled $(1x00)$ $a$ times and ultimately terminating at a vertex labeled $(00[2x]2)^a0^b$. Traversing then $b$ edges labeled $0$ gets us to the vertex labeled $0^b(00[2x]2)^a$. We may then traverse edges labeled $(1000)^{a-1}1000^b$ to arrive back at the vertex $v$ labeled $0^b(0022)^a$. This completes the proof.
\end{proof}

\begin{rem}\label{rem47}
We speculate on the behavior of the Hausdorff dimension function $\mathcal{C}(1, P_k)$ as  a function of $k$.
We believe the following might be true.
\begin{enumerate}
\item[(1)]
Fixing level $j$ and varying $k$ the topological entropy of the strongly connected component at depth
$j$ stay at value $0$ until $k \ge 2j-2$, then increas monotonically to a maximum and then decrease monotonically
thereafter. 
\item[(2)]
The ``champion" depth $j$ with maximal topological entropy is a nondecreasing function of $k$.
\end{enumerate} 
Speculations (1) and (2) are suggested by  analogy with the behavior of the number of vertices at depth $j$ as a function of $k$,
given in Table 4.1, which have both these properties.
\end{rem}
%
%
%

\subsection{Hausdorff dimension bounds for  $\mathcal{C}(1, P_{k_1}, ..., P_{k_n})$  }\label{sec54}

The path set structures of the members of the infinite  family $P_k$  are compatible with
each other, as a function of $k$, so that the associated $\mathcal{C}(1, P_{k_1}, ..., P_{k_n})$
all have positive Hausdorff dimension.  We relate these Hausdorff dimensions to those of  the infinite family 
$L_k = (1^k)_3 = \frac{1}{2} (3^{k+1} -1)$
treated by the first and third authors in \cite{AL14c} and reviewed in Appendix A (Section ~\ref{secA0}).

\begin{thm} \label{thm414} 
For the family $P_k= 2 \cdot 3^{k} +1 = (20^{k-1}1)_3$, 
and  $0 \leq k_1 < \ldots < k_n$, the graph $\mathcal{G}$ presenting the path set $X(1, P_{k_1}, ..., P_{k_n})$ underlying
 $\mathcal{C}(1,P_{k_1},\ldots,P_{k_n})$ contains a double covering of the 
 underlying directed graph $G_{(1^{k_n+2})_3}$ presenting the path set $X (1, L_{k_n+1})$ underlying 
$\mathcal{C}(1,L_{k_n+1})$. Consequently
\begin{equation} 
\dim_H(\mathcal{C}(1,P_{k_1},\ldots,P_{k_n})) \geq \dim_H(\mathcal{C}(1,L_{k_n+2})).
\end{equation}
\end{thm}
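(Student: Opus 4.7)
The plan is to exhibit an explicit connected subgraph inside the path-set presentation $\mathcal{G}$ of $X(1,P_{k_1},\ldots,P_{k_n})$ that is a double cover of the presentation $G_{(1^{k_n+2})_3}$ of $X(1,L_{k_n+1})$, and then extract the Hausdorff dimension inequality from a comparison of Perron eigenvalues via Proposition \ref{pr22a}.

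First I would analyze the depth-$0$ strongly connected component $C_0^{(k)}\subset \mathcal{G}_k$ supplied by Proposition \ref{pr410a}: it has $2(k+1)$ vertices, lies on a single cycle, and is interchanged with itself by the fixed-point-free reflection automorphism $R(m)=3^k-m$ of Proposition \ref{pr49}(3). The vertex labels in $C_0^{(k)}$ are precisely the $3$-adic strings of length $k+1$ with no two consecutive nonzero digits, which is exactly the admissibility condition for the Algorithm A presentation of $X(1,L_{k+1})$ from Part I. A direct edge-label matching then identifies $C_0^{(k)}$ as a connected double cover of $G_{(1^{k+2})_3}$, where the two sheets are the $R$-orbits and the covering involution on edges is the digit swap $0\leftrightarrow 1$.

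Next I would handle the intersection using the fibre-product construction for path-set intersections from \cite{AL14a}, which realizes $\mathcal{G}$ as a subgraph of the labeled Cartesian product $\mathcal{G}_{k_1}\times\cdots\times\mathcal{G}_{k_n}$ (with the matching-label condition on edges). The key structural claim to verify is that the tuples $(v_1,\ldots,v_n)$ with $v_i\in C_0^{(k_i)}$ for every $i$ form a sub-automaton that is isomorphic, as an edge-labeled directed graph, to the single component $C_0^{(k_n)}$. Concretely, the Algorithm A update rule for $P_{k_i}$ depends only on the top $k_i+1$ digits of the current state; if a label sits in $C_0^{(k_n)}$ then its top $k_i+1$ digits satisfy the same no-consecutive-nonzero-digit condition that defines $C_0^{(k_i)}$, and the outgoing edges admissible in $\mathcal{G}_{k_n}$ are simultaneously admissible in each $\mathcal{G}_{k_i}$. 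Thus $C_0^{(k_n)}$ pulls back isomorphically to a subgraph of $\mathcal{G}$, and composing with the quotient by $R$ gives the advertised double cover of $G_{(1^{k_n+2})_3}$.

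The main obstacle, in my view, is rigorously verifying this compatibility between Algorithm A applied at different values of $k_i$: one must show that truncating a depth-$0$ state for $P_{k_n}$ to its top $k_i+1$ digits really yields a depth-$0$ state for $P_{k_i}$, and that the digit read off under the left-append/right-drop operation agrees across all coordinates. Once this is settled, the conclusion is routine. The adjacency matrix of $\mathcal{G}$ entrywise dominates that of the subgraph just constructed; the latter, being a connected double cover, has the same spectral radius as $G_{(1^{k_n+2})_3}$. Taking $\log_3$ and applying Proposition \ref{pr22a} then yields
\[
\dim_H\bigl(\mathcal{C}(1,P_{k_1},\ldots,P_{k_n})\bigr)\ \geq\ \dim_H\bigl(\mathcal{C}(1,L_{k_n+2})\bigr),
\]
as claimed.
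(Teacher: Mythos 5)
Your overall strategy is the paper's own: isolate the depth-$0$ double cover of the $L$-presentation inside each $\mathcal{G}_{k_i}$ (this is exactly Theorem \ref{thm37}), show that in the label (fibre) product presenting the intersection the depth-$0$ pieces synchronize to a single copy of the component for $k_n$ (the paper's isomorphism $\mathcal{G}_{(1^{k_1+2})_3}\star\cdots\star\mathcal{G}_{(1^{k_n+2})_3}\cong\mathcal{G}_{(1^{k_n+2})_3}$), and finish by comparing spectral radii. However, two of your supporting claims are false, and the second is the step carrying the whole argument. First, the depth-$0$ vertex labels of $\mathcal{G}_k$ are \emph{not} the strings of length $k+1$ with no two consecutive nonzero digits: by Definition \ref{de410} and the proof of Theorem \ref{thm37} they are precisely the $2k+2$ labels $0$, $(2^j0^{k-j})_3$ for $1\le j\le k$, and $(10^j)_3$ for $0\le j\le k$; a label such as $(220^{k-2})_3$ has consecutive nonzero digits, and the family you describe is far larger than $2k+2$ (nor is it the vertex set of the $L_{k+1}$-presentation, which has only $k+1$ vertices). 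Second, the proposed verification of the key compatibility claim --- that the $P_{k_n}$-state truncated to its top $k_i+1$ digits is the $P_{k_i}$-state --- fails. Concretely, in the fibre product presenting $X(1,P_1,P_2)$, after reading the word $11$ from $(0,0)$ the coordinate states are $(10)_3$ for $P_1$ and $(022)_3$ for $P_2$, and no digit truncation of $(022)_3$ yields $(10)_3$. Relatedly, the set of \emph{all} tuples with every coordinate of depth $0$ has $\prod_i(2k_i+2)$ vertices and cannot be isomorphic to $C_0^{(k_n)}$; only the part reachable from $(0,\ldots,0)$ is.

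The synchronization is correctly seen not through digit truncation but through the edge-label language of the depth-$0$ component: it is the $(2k+2)$-cycle through $0,(20^{k-1})_3,\ldots,(2^k)_3,(10^k)_3,\ldots,(1)_3$ together with self-loops at $0$ (label $0$) and at $(10^k)_3$ (label $1$), so the admissible label sequences from the $0$-vertex are exactly the $\{0,1\}$-sequences in which every maximal run of equal digits bounded on both sides by the other digit has length at least $k+1$. Intersecting these constraints over $k_1<\cdots<k_n$ reproduces the constraint for $k_n$, so the reachable synchronized subgraph with all coordinates at depth $0$ is a labeled copy of the depth-$0$ component for $k_n$, which is a connected double cover of the $L$-graph; from there your spectral-radius comparison (entrywise domination plus invariance of the Perron eigenvalue under a connected cover, then Proposition \ref{pr22a}) is sound. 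Note also that the two self-loops are essential to the bound: without them the subgraph would be a bare cycle of spectral radius $1$ and the inequality would be vacuous, so any write-up must carry them through the covering and product arguments explicitly.
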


\begin{proof} The graphs under consideration are the graphs given by Algorithm A of \cite{AL14c}. Since the underlying graph $G_k$ of the path set presentation $(\sG_k, v_0)$ 
of the path set $X(1, P_k)$ 
 contains a double covering of the underlying graph $G_{k+1}^{'}$ of the path set presentation of
 $X(1, L_{k+1})$, and
\[ \mathcal{G}_{(1^{k_1+2})_3} \star \cdots \star \mathcal{G}_{(1^{k_n+2})_3} \cong  \mathcal{G}_{(1^{k_n+2})_3},\]
the proposition follows from Theorem ~\ref{thm37} in Appendix B.

Note that this directed graph covering  is not a covering at the level of path sets, because the path labels 
on the two graphs differ.
\end{proof}

Theorem \ref{thm414}  shows that there exist an arbitrarily large number of different values $M_j$,
each  having a $2$ in their ternary expansion,
such that $\dim_{H}(\mathcal{C}(1, M_1, M_2, ..., M_n))>0$.

%
%

\section{The infinite family $Q_k = 3^{2k}-3^k+1 = (2^k0^{k-1}1)_3$} \label{secqk}

Let $Q_k = 3^{2k}-3^k+1 = (2^k0^{k-1}1)_3$. We will prove Theorem ~\ref{thqkstruct}, 
which describes the structure of a graph presentation $\mathcal{G}_k$ of $\mathcal{C}(1,Q_k)$. 
We then use this description to  prove Theorem ~\ref{th111a}, which computes the Hausdorff dimension of $\mathcal{C}(1,Q_k)$.

%
%
\subsection{The Family $Q_k = (2^k0^{k-1}1)_3 = 3^{2k}-3^k +1$: Path set structure}

First, let us give an example. The following example gives a path set presentation for $Q_2 = 73$.

%
%
%

\begin{exmp}\label{examp51} 
A path set presentation of $X(1,73)$, with $73=(2201)_3 $, is shown in Figure ~\ref{fig51}. The vertex labeled $0$ is the marked initial vertex.

%
%
%

\begin{figure}[ht]\label{fig51}
 \centering
 \psset{unit=1pt}
 \begin{pspicture}(-250,-250)(250,250)
  \newcommand{\noden}[2]{\node{#1}{#2}{n}}
  \noden{0}{-75,190}
  \noden{220}{-200,150}
  \noden{1012}{-240,0}
  \noden{1022}{-200,-150}
  \noden{1100}{-75,-190}
  \noden{110}{50,-150}
  \noden{11}{90,0}
  \noden{1}{50,150}
  \noden{22}{-150,75}
  \noden{1020}{-150,-75}
  \noden{102}{-75,-125}
  \noden{1001}{0,-75}
  \noden{100}{-25,0}
  \noden{10}{0,75}
  \noden{221}{-75,125}
  \noden{1000}{-125,0}

  \bcircle{n0}{0}{0}
  \bline{n0}{n220}{1}
  \bline{n220}{n1012}{1}
  \aline{n220}{n22}{0}
  \bline{n1012}{n1022}{1}
  \bline{n1022}{n1100}{1}
  \bcircle{n1100}{180}{1}
  \bline{n1100}{n110}{0}
  \bline{n110}{n11}{0}
  \aline{n110}{n1001}{1}
  \bline{n11}{n1}{0}
  \bline{n1}{n0}{0}
  \bline{n1020}{n102}{0}
  \bline{n102}{n1001}{1}
  \bline{n1020}{n1022}{1}
  \bline{n1001}{n100}{0}
  \bline{n100}{n10}{0}
  \bline{n10}{n1}{0}
  \bline{n10}{n221}{1}
  \bline{n221}{n22}{0}
  \bline{n22}{n1000}{1}
  \bline{n1000}{n1020}{1}
  \dline{n1000}{n100}{0}{1}
 \end{pspicture}
 \newline
 \hskip 0.5in {\rm FIGURE 5.1} Path set presentation of $X(1,73)$. The marked vertex is $0$.
\newline
\newline

\end{figure} 

The graph in Figure ~\ref{fig51} has adjacency matrix 
\begin{equation*}
\bf{A} = \left(\begin{array}{cccccccccccccccc}
1 & 1 & 0 & 0 & 0 & 0 & 0 & 0 & 0 & 0 & 0 & 0 & 0 & 0 & 0 & 0 \\
0 & 0 & 1 & 1 & 0 & 0 & 0 & 0 & 0 & 0 & 0 & 0 & 0 & 0 & 0 & 0 \\
0 & 0 & 0 & 0 & 1 & 0 & 0 & 0 & 0 & 0 & 0 & 0 & 0 & 0 & 0 & 0 \\
0 & 0 & 0 & 0 & 0 & 1 & 0 & 0 & 0 & 0 & 0 & 0 & 0 & 0 & 0 & 0 \\
0 & 0 & 0 & 0 & 0 & 0 & 1 & 0 & 0 & 0 & 0 & 0 & 0 & 0 & 0 & 0 \\
0 & 0 & 0 & 0 & 0 & 0 & 0 & 1 & 1 & 0 & 0 & 0 & 0 & 0 & 0 & 0 \\
0 & 0 & 0 & 0 & 0 & 0 & 1 & 0 & 0 & 1 & 0 & 0 & 0 & 0 & 0 & 0 \\
0 & 0 & 0 & 0 & 1 & 0 & 0 & 0 & 0 & 0 & 1 & 0 & 0 & 0 & 0 & 0 \\
0 & 0 & 0 & 0 & 0 & 1 & 0 & 0 & 0 & 0 & 0 & 1 & 0 & 0 & 0 & 0 \\
0 & 0 & 0 & 0 & 0 & 0 & 0 & 0 & 0 & 0 & 0 & 0 & 1 & 1 & 0 & 0 \\
0 & 0 & 0 & 0 & 0 & 0 & 0 & 0 & 0 & 0 & 0 & 0 & 1 & 0 & 0 & 0 \\
0 & 0 & 0 & 0 & 0 & 0 & 0 & 0 & 0 & 0 & 0 & 0 & 0 & 0 & 1 & 1 \\
0 & 0 & 0 & 0 & 0 & 0 & 0 & 0 & 1 & 0 & 0 & 0 & 0 & 0 & 0 & 0 \\
0 & 0 & 0 & 0 & 0 & 0 & 0 & 0 & 0 & 0 & 0 & 0 & 0 & 0 & 0 & 1 \\
0 & 0 & 0 & 1 & 0 & 0 & 0 & 0 & 0 & 0 & 0 & 0 & 0 & 0 & 0 & 0 \\
1 & 0 & 0 & 0 & 0 & 0 & 0 & 0 & 0 & 0 & 0 & 0 & 0 & 0 & 0 & 0 \\
\end{array}\right),
\end{equation*}
which has Perron eigenvalue $\beta = \frac{1+ \sqrt{5}}{2}$, so
\[\dim_H(\mathcal{C}(1,73)) = \log_3 \left( \frac{1+\sqrt{5}}{2}\right) \approx 0.438108.\]

\end{exmp}

We describe the path set presentation in general. Theorem ~\ref{thqkstruct} will follow easily from the following result, which makes use of the concepts developed in Section ~\ref{sec34}.

%
%
%

\begin{prop} \label{qkprop1} 
Let $\mathcal{P} = X(1,7)$ be the path set underlying $\mathcal{C}(1,7)$, and let $\mathcal{Q} = X(1,Q_k)$ be the path set underlying $\mathcal{C}(1,Q_k)$. Then $\mathcal{Q}$ is the interleaved path set 
\begin{equation}
\mathcal{Q} = \mathcal{P}^{(*k)}.
\end{equation}
\end{prop}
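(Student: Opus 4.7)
The plan is to bypass path-set manipulations and prove the equality $\mathcal{Q} = \mathcal{P}^{(*k)}$ directly at the level of $3$-adic integers, exploiting the factorization $Q_k = 3^{2k} - 3^k + 1$, which rearranges to
\[
\alpha + 3^{2k}\alpha \;=\; Q_k\alpha + 3^k\alpha.
\]

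First I will establish the following digit-level characterization: for $\alpha = \sum_{i\ge 0} a_i 3^i \in \Sigma_3$ (with the convention $a_i = 0$ for $i < 0$), one has $\alpha \in \mathcal{C}(1,Q_k)$ if and only if $a_i - a_{i-k} + a_{i-2k} \in \{0,1\}$ for every $i \ge 0$. The key observation is that both sides of the displayed identity admit carry-free $3$-adic expansions in the relevant regime. Since $a_i \in \{0,1\}$, the right-hand side $\alpha + 3^{2k}\alpha$ has digits $a_i + a_{i-2k} \in \{0,1,2\}$ with no carries. If $Q_k\alpha \in \Sigma_3$, write $Q_k\alpha = \sum_i d_i 3^i$ with $d_i \in \{0,1\}$; then $Q_k\alpha + 3^k\alpha$ has digits $d_i + a_{i-k} \in \{0,1,2\}$, again with no carries. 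Uniqueness of the $3$-adic expansion forces the digit-by-digit identification $d_i = a_i - a_{i-k} + a_{i-2k}$, which therefore must lie in $\{0,1\}$. Conversely, if all values $a_i - a_{i-k} + a_{i-2k}$ lie in $\{0,1\}$, taking them as digits produces a valid $\Sigma_3$-expansion whose value is $\alpha - 3^k\alpha + 3^{2k}\alpha = Q_k\alpha$.

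Next, since the indices $i$, $i-k$, and $i-2k$ all share the residue class $j = i \bmod k$, the condition splits into $k$ independent conditions, one per residue class modulo $k$. Writing $c_n := a_{j+kn}$, the condition on class $j$ reads $c_n - c_{n-1} + c_{n-2} \in \{0,1\}$ for every $n \ge 0$. But this is precisely the $k=1$ instance of the same characterization applied to $Q_1 = 7$ and to the decimation $\hat{\alpha}_j := \psi_{j,k}(\alpha)$, so residue class $j$ satisfies the condition if and only if $\hat{\alpha}_j \in \mathcal{C}(1,7)$.

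Combining the two steps, $\alpha \in \mathcal{C}(1,Q_k)$ if and only if $\hat{\alpha}_j \in \mathcal{C}(1,7)$ for every $j = 0, 1, \ldots, k-1$; translating to the symbolic level yields $\mathcal{Q} = \mathcal{P}^{(*k)}$. I do not foresee any serious obstacle. The only delicate point is the carry-free justification in the first step, which rests on the elementary fact that sums of $\{0,1\}$-digit sequences yield digits in $\{0,1,2\}$ with no carrying. The argument closely mirrors the simpler case $N_k = 3^k + 1$ recorded in Remark~\ref{remark38}(1), where the cleaner identity $N_k\alpha = \alpha + 3^k\alpha$ already displays the carry-free phenomenon in its purest form.
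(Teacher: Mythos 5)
Your proposal is correct, and it proves the proposition by a genuinely different route than the paper. The paper's proof stays inside the automaton produced by Algorithm A of Part I: it fixes a residue class $j \bmod k$, and by an induction tracking the vertex labels reached (e.g.\ $10^{2k-1}$, $10^{k-2}$, etc.) shows that digits in distinct residue classes do not constrain one another and that each class traces out exactly the walks in the $4$-vertex presentation of $X(1,7)$. You instead work purely $3$-adically: from $Q_k\alpha + 3^k\alpha = \alpha + 3^{2k}\alpha$ and the fact that sums of two $\{0,1\}$-digit $3$-adic integers are carry-free, uniqueness of the $3$-adic expansion gives the local criterion $a_i - a_{i-k} + a_{i-2k} \in \{0,1\}$ for membership of $\alpha \in \Sigma_3$ in $\mathcal{C}(1,Q_k)$ (the converse direction being the term-wise identity $\sum_i (a_i - a_{i-k} + a_{i-2k})3^i = Q_k\alpha$ in $\mathbb{Z}_3$); since the criterion at index $i$ only involves indices in the class $i \bmod k$, it decouples into $k$ copies of the $Q_1=7$ criterion, which is exactly the definition of the $k$-interleaving. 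Your argument is shorter, avoids the automaton entirely, and yields as a byproduct an explicit finite-type description of $X(1,Q_k)$ by windowed constraints, in the same spirit as the $N_k = 3^k+1$ case recalled in Remark~\ref{remark38}(1); what the paper's route buys is that the conclusion is obtained directly on the Algorithm A presentation, making the isomorphism with the interleaved presentation (noted after the paper's proof) immediate, though nothing downstream (Theorems~\ref{thqkstruct} and~\ref{th111a}) requires more than the set equality you establish together with Proposition~\ref{aprop1} and Corollary~\ref{acor1}. The only blemishes are cosmetic: you call $\alpha + 3^{2k}\alpha$ the ``right-hand side'' of your displayed identity when it is the left-hand side, and you should say explicitly that the digit-wise identification uses that both sides are carry-free expansions of the \emph{same} $3$-adic integer $(3^{2k}+1)\alpha$.
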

\begin{proof} For convenience, we recall that $\mathcal{P} = X_{\mathcal{G}}(0)$ for the graph $\mathcal{G}$ in Figure ~\ref{figqk1}. This is the graph given by the Algorithm A of \cite{AL14c}.

\begin{figure}[ht]\label{figqk1}
	\centering
	\psset{unit=1pt}
	\begin{pspicture}(-80,-50)(80,150)
		\newcommand{\noden}[2]{\node{#1}{#2}{n}}
		\noden{0}{0,100}
		\noden{1}{50,50}
		\noden{2}{-50,50}
		\noden{10}{0,0}
		\bcircle{n0}{0}{0}
		\bcircle{n10}{180}{1}
		\bline{n0}{n2}{1}
		\bline{n2}{n10}{1}
		\bline{n10}{n1}{0}
		\bline{n1}{n0}{0}
	\end{pspicture}
	\newline
\hskip 0.5in {\rm FIGURE 5.2.} Path set presentation of $X(1,7)$. The marked vertex is $0$.
\newline
\newline
\end{figure}

Let $(\mathcal{H},v_0)$ be the graph presentation of $\mathcal{Q}$ given by the same algorithm. An element of $\mathcal{P}$ may begin with either a $0$ or a $1$, while an element $(x_i)_{i=0}^\infty$ of $\mathcal{Q}$ may begin with any sequence $x_0x_1 \cdots x_{k-1}$ of $0$'s and $1$'s, since $Q_k$ terminates in $0^{k-1}1$. Thus, the initial $k$-blocks of $\mathcal{Q}$ are precisely the same as the initial $k$-blocks of 
the interleaved path set $\mathcal{P}^{(*k)}$. 

To show that $\mathcal{Q} = \mathcal{P}^{(*k)}$ we
just need to check that for each $0 \leq j \leq k-1$, the admissible strings $x_j x_{j+k} x_{j+2k} \cdots$ of $j ~(\bmod k)$ digits of elements of $\mathcal{Q}$ are precisely the elements of $\mathcal{P}$. 
We proceed by induction on $j \ge 0$, the observation above completing the base case $j=0$.
Inductively, assume none of the digits $x_r$ for $r \equiv l ~(\bmod k)$ with $l < j$ can restrict the admissible values for the digits $x_{j + nk}$ 
for $n \geq 0$. We mean here that whether $x_r = 0$ or $x_r=1$ has no effect on the last digit of the vertex label in $\mathcal{H}$ 
arrived at from a path labeled $x_0x_1\cdots x_{j + nk}$ originating at $v_0$. The base case, $j=0$, is satisfied trivially. 
Then we can without loss of generality assume $x_i = 0$ for all $0 \leq i < j$. For now, we will also assume that $x_r = 0$
 for all $r \not\equiv j ~(\bmod k)$. This assumption is not as restrictive as it seems since, as we will show, the $j ~(\bmod k)$
  digits do not effect the available choices for digits of other modular classes. Now since $Q_k = 2^k 0^{k-1}1$, whether $x_j$ is 
  $0$ or $1$ has no effect on the digits $x_{j+1}, x_{j+2}, \ldots, x_{j+k-1}$. If $x_j = 0$, then $x_{j+k}$ may also be either $0$ or $1$.
   If $x_{j+mk}$ is $0$ for all $m < n$, then also $x_{j + nk}$ may be either $0$ or $1$, and those $x_r$ for $r < j +nk$, 
   $r \not\equiv j ~(\bmod k)$ are unrestricted. On the other hand, suppose there is an $n \geq 0$ such that $x_{j + mk} = 0$ for all
    $m < n$ and $x_{j + nk} = 1$. Again, the labels $x_r$ for $r < j + (n+1)k$, $r \not\equiv j ~(\bmod k)$ are unrestricted. 
    However, $x_{j + (n+1)k}$ must now be a $1$. Now the label of the vertex we are at, having traversed the path labeled 
    $x_0x_1 \cdots x_{j+(n+1)k}$ from $v_0$, has label $10^{2k-1}$. Thus the digits $x_{j+(n+1)k +1},x_{j + (n+1)k +2}, \cdots x_{j+(n+3)k-1}$ 
    are unrestricted. However, if the digit $x_{j + (n+2)k}$ is a $1$, then the vertex  at the end of the path labeled $x_0x_1 \cdots x_{j+(n+2)k}$
     has label $10^{2k-1}$, so the vertices after $x_{j + (n+2)k}$ are restricted or unrestricted in precisely the same way as those after
      $x_{j + (n+1)k}$. If on the other hand $x_{j+(n+2)k} = 0$, then the terminal vertex has label $10^{k-2}$. 
      Thus, the label of the vertex after $j + (n+3)k-1$ steps in this case is $1$, hence in this case $x_{j+(n+3)k}$ must be $0$. 
      The resulting terminal vertex label is $0$. In either case, the digits, $x_{j+ (n+3)k+1}, x_{j+(n+3)k+2}, x_{j + (n+4)k-1}$ are unrestricted. 
      For the $(j + (n+4)k)$th step we either begin at vertex $0$ or at vertex $10^{k-1}$, which cases have already been considered.

Thus, we have shown that the digits $x_{j+nk}$ place no restrictions on any digits from the other modular classes, and, furthermore,
 we have described the restrictions that $x_{j+nk}$ place on $x_{j + mk}$ for $m > n$. Inspecting this description shows that the 
 admissible digits $x_j x_{j+k} x_{j+2k}$ are precisely the edge labels of the infinite walks in $\mathcal{G}$ originating at the vertex $0$ 
 in Figure ~\ref{figqk1}. These are precisely the elements of $\mathcal{P}$, so $\mathcal{Q} = \mathcal{P}^{(*k)}$.
\end{proof}

Let $\mathcal{G}$ be the graph of Figure ~\ref{figqk1}. The presentation for $Q_k$ given by Proposition ~\ref{aprop1} applied to $k$ and $\mathcal{G}$ is isomorphic to that given by Algorithm A of \cite{AL14c}. We are now ready to prove Theorem ~\ref{thqkstruct}.

\begin{proof}[Proof of Theorem ~\ref{thqkstruct}.] Let $(\mathcal{G}_k,v_0)$ be the presentation of $\mathcal{Q} = X(1, Q_k)$ constructed by applying the algorithm of Proposition ~\ref{aprop1} to the presentation $\mathcal{G}$ of $X(1, 7)$. Since the graph $\mathcal{G}$ used in this construction has $4$ vertices and $6$ edges, it follows by Proposition ~\ref{aprop1} that $\mathcal{G}_k$ has $4^k$ vertices and $6 \cdot 4^{k-1}$ edges. Moreover, since $\mathcal{G}$ is strongly connected, so is $\mathcal{G}_k$, by Remark ~\ref{rem35}. This proves the theorem. 
\end{proof}

%
%
%

\subsection{The family $Q_k = (2^k0^{k-1}1)_3 = 3^{2k}-3^k +1$: Hausdorff dimension}

We have shown that 
\begin{equation} 
X(1,Q_k) = X(1,7)^{(*k)},
\end{equation}
is given by an interleaving construction. 
Using the results of Section ~\ref{sec34},  
it is now a simple matter to prove Theorem ~\ref{th111a}. 

\begin{proof}[Proof of Theorem ~\ref{th111a}] We are trying to show that
\[
\dim_H(\mathcal{C}(1,Q_k)) = \log_3 \phi.
\]
The result follows by Proposition ~\ref{qkprop1} and by application of the
interleaving result given in  Corollary ~\ref{acor1}, since 
\[
\dim_H(\mathcal{C}(1,7)) = \log_3 \phi,
\]
as is easily computed, and Corollary ~\ref{acor1} shows that the interleaving operation $(\cdot)^{(*k)}$ preserves the topological entropy of 
the input path set. 
\end{proof}

%
%
%

\section{Bounds on Hausdorff dimensions by numbers of ternary digits}\label{sec6}

We study properties of the Hausdorff dimension constants $\alpha_n$.

%
%
%

\subsection{Upper Bound  on  $\Gamma$ via $n$-digit constants $\arr_n$: Proof of Theorem \ref{th1N1}.} 
\label{sec41n}

It is known that the number of nonzero ternary digits in $(2^n)_3$ goes to infinity
as $n \to \infty$, i.e. for each $k \ge 2$ there are only finitely many $n$ with $(2^n)_3$ having
at most $k$ nonzero ternary digits. 
This result was first established in 1971 by Senge and Straus, see  \cite{SS71}.
In 1980 Colin L. Stewart \cite[Theorem 1]{St80}
obtained a quantitative refinement of such bounds. 
We obtain as a special case of his result  the  following quantitative version of the rate of growth of the number
of nonzero digits.

 
\begin{thm}\label{thA1} {\rm (C. L. Stewart) }
For each $k \ge 1$, there are only finitely many $n$ such that the
base $3$ expansion of $2^n$ (equivalently the
$3$-adic expansion $(2^n)_3$) has at most $k$
nonzero digits. More precisely, if $n_3(n)$ denotes the sum
of the base $3$ digits of $n$, then for $m \ge 25$,
$$
n_3(2^m) > \frac{\log m}{\log\log m + c} - 3,
$$
where $c>0$ is an effectively computable constant.
\end{thm}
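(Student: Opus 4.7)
My plan is to reconstruct Stewart's argument, which rests on an iterated application of an effective $p$-adic estimate for linear forms in logarithms. Write the ternary expansion of $2^m$ as
\[
2^m = \sum_{i=1}^{k} c_i\, 3^{a_i}, \quad c_i \in \{1,2\}, \quad 0 \le a_1 < a_2 < \cdots < a_k,
\]
where $k = n_3(2^m)$. Two elementary observations anchor everything: $a_1 = 0$, since $2^m \equiv (-1)^m \not\equiv 0 \pmod 3$; and $a_k \ge m\log_3 2 - 1$, since $3^{a_k + 1} > 2^m$. The goal is to lower bound $k$ in terms of $m$.

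For each $1 \le r < k$, introduce the truncated sum
\[
V_r := \sum_{i=1}^{r} c_i\, 3^{a_i},
\]
a positive integer with $V_r < 3^{a_r + 1}$, coprime to $3$ (because $c_1 \in \{1,2\}$). Reading off the ternary expansion of $2^m$, one sees directly that
\[
v_3(2^m - V_r) = a_{r+1},
\]
as the lowest-order unmatched ternary digit of $2^m - V_r$ sits at position $a_{r+1}$.

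The crux is an effective upper bound on $v_3(2^m - V_r)$. I will apply the $p$-adic theorem on linear forms in two logarithms (Gel'fond, sharpened by van der Poorten and later by Yu) to the quantity $2^m V_r^{-1} - 1$, viewed as a two-term form with bases $2$ and $V_r$ and integer exponents $m$ and $-1$. Its output has the shape
\[
v_3\bigl(2^m V_r^{-1} - 1\bigr) \le C_1 \cdot \log V_r \cdot \log m,
\]
for an effective absolute constant $C_1$. Since $v_3(V_r) = 0$, the left side equals $v_3(2^m - V_r) = a_{r+1}$, and $\log V_r \le (a_r + 1)\log 3$ yields the recursion
\[
a_{r+1} \le C_2\, (a_r + 1)\log m.
\]

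Iterating this recursion from the base case $a_1 = 0$ gives $a_k \le (2 C_2 \log m)^{k-1}$ for $m$ sufficiently large, and comparing with the elementary lower bound $a_k \ge m \log_3 2 - 1$ forces
\[
(k-1)\bigl(\log\log m + \log(2 C_2)\bigr) \ge \log m - O(1),
\]
from which the stated inequality $n_3(2^m) > \log m / (\log\log m + c) - 3$ follows, with $c$ determined by $C_2$ and the $-3$ absorbing lower-order constants. The main obstacle is the quantitative $p$-adic Baker bound itself, which lies deep; the qualitative finiteness assertion (the first sentence of the theorem) would follow already from the earlier ineffective result of Senge and Straus, but the explicit $\log m / (\log\log m + c)$ rate really requires the full strength of the effective $p$-adic theory of linear forms in logarithms.
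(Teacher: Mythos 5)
Your outline is sound, but it is a genuinely different route from the paper's, because the paper does not prove this statement at all: its proof is a two-line specialization of Stewart's general two-base theorem, citing \cite{St80}, Theorem 1 with $a=2$, $b=3$, $\alpha=\beta=0$, and observing that in Stewart's notation $L_{a,\alpha,b,\beta}(2^m)-2$ counts the nonzero ternary digits of $2^m$ (the base-$2$ expansion contributes a fixed amount), after which the stated inequality is Stewart's bound rewritten using $\log\log 2^m \asymp \log m$. What you do instead is reconstruct a proof of the cited black box in the special case $n=2^m$: the truncations $V_r$, the identity $v_3(2^m-V_r)=a_{r+1}$ (valid since $c_{r+1}\in\{1,2\}$ and $r<k$), an effective $p$-adic two-logarithm estimate yielding $a_{r+1}\le C_2(a_r+1)\log m$, and iteration against $a_k\ge m\log_3 2-1$. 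That skeleton is faithful to the spirit of Stewart's argument (truncate, bound the gap between consecutive nonzero digit positions by a Baker-type estimate, iterate), and it buys self-containedness modulo the transcendence input; the paper's citation route buys brevity and inherits the explicit constants ($m\ge 25$, the $-3$) directly from Stewart. Note also the small bookkeeping point that the theorem's $n_3$ is the ternary digit \emph{sum}; your bound is on the number $k$ of nonzero digits, which suffices since the sum is at least $k$.

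The one point you should not leave as a bare assertion is the shape of the $p$-adic estimate, because the entire quantitative conclusion hinges on the exponent of $\log m$ there. You need $v_3\bigl(2^mV_r^{-1}-1\bigr)\le C_1\log V_r\,\log m$ with $\log m$ to the \emph{first} power; this is what Yu's $p$-adic theorems provide, and it is exactly what produces the denominator $\log\log m+c$. The older $p$-adic bounds of van der Poorten's type, with a $(\log B)^2$ dependence, would run through your recursion to give only $k\gtrsim \log m/(2\log\log m+c)$, which is asymptotically weaker than the stated inequality and cannot be absorbed into $c$ or the $-3$. (Stewart himself, working in 1980, can be read as arranging the argument so that the estimates available then suffice; your bottom-up $p$-adic version is fine with a modern citation, but you must cite a specific theorem with linear dependence on $\log B$, and also record the trivial nonvanishing $2^mV_r^{-1}-1\neq 0$, which holds because $0<V_r<2^m$.) With such a citation in place, your iteration and the final comparison with $a_k\ge m\log_3 2-1$ do deliver the stated bound.
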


\begin{proof}
The result  follows from \cite[Theorem 1]{St80}, taking 
for bases  $a=2$, $b=3$, and digits $\alpha=\beta=0$.
Using Stewart's notation, $L_{a, \alpha}(2^m)=2,$ so that $L_{a, \alpha, b, \beta}(2^m)-2$
counts the number of nonzero ternary digits $n_3(2^m)$ of $2^m$.
\end{proof}

We can now prove Theorem \ref{th1N1}.


\begin{proof}[ Proof of Theorem \ref{th1N1}.]
For each $n \ge 1$ we have
\[
\Gamma \le \dim_{H}(\mathcal{E}_1^{(n+1)}).
\]
We also have the inclusions 
\begin{equation}\label{eq151}
\mathcal{E}_1^{(n+1)}  = \bigcup_{0 \leq m_1 < \ldots < m_k} \mathcal{C}(1, 2^{m_1},\ldots,2^{m_{n}})
\subset \bigcup_{m=n}^{\infty} \mathcal{C}(1, 2^m),
\end{equation}
which yields
\[
\dim_{H}(\mathcal{E}_1^{(n+1)}) \le \sup_{m \ge n} \Big(\dim_{H} ( \mathcal{C}(1, 2^m))\Big).
\]
Consequently we have
\begin{equation}
\Gamma \le \sup_{m \ge n} \Big( \dim_{H}(\mathcal{C}(1, 2^m)) \Big).
\end{equation}
However Theorem \ref{thA1} implies that all $(2^m)_3$ for $m \ge n$ contain at least 
\[
k  =k(n):=  \left\lfloor \frac{\log n}{\log\log n + c}\right\rfloor - 3
\]
nonzero ternary digits. In particular
\[
 \mathcal{E}_1^{(n+1)}  
\subset \bigcup_{m=n}^{\infty} \mathcal{C}(1, 2^m) \subset \bigcup_{\{ {M}: \,n_3(M) \ge k(n)\}} \mathcal{C}(1, M).
\]
By defnition of $\arr_k$ it follows 
that
\[
\dim_{H}(\mathcal{E}_1^{(n+1)}) \le \arr_{k(n)}.
\]
Since $k(n) \to \infty$ as $n \to \infty$, we obtain
\[
\Gamma = \lim_{n \to \infty} \dim_{H}(\mathcal{E}_1^{(n+1)}) \le \lim_{k \to \infty} \arr_k,
\]
as asserted.
\end{proof}
%
%
%

\subsection{Exact bound for $\arr_2$}\label{sec42n}

We obtain a complete determination of $\arr_2$.

\begin{thm} \label{r2bound}
For all $M \ge 1$ with $M \equiv \, 1\, (\bmod \, 3)$,
one has
$$
\dim_{H} ( \mathcal{C}(1,M)) \le \log_3 \phi \approx 0.438018.
$$
where $\phi = \frac{1+ \sqrt{5}}{2}$ is the golden ratio. 
Thus $\arr_2= \log_3 \phi \approx 0.438018$
\end{thm}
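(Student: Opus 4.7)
The plan is to bound the Perron eigenvalue $\beta_M$ of the adjacency matrix of a right-resolving presentation of $X(1, M)$ by $\phi$, which by Proposition~\ref{pr22a} yields $\dim_H \mathcal{C}(1, M) = \log_3 \beta_M \le \log_3 \phi$. The degenerate case $M = 1$ falls outside the regime relevant to the intended application $\alpha_2 = \log_3 \phi$ (which concerns $M$ with at least two nonzero ternary digits); assuming $M > 1$, I would write $M = 1 + 3^j b$ with $j \ge 1$ and $\gcd(b, 3) = 1$ and construct the canonical right-resolving presentation $(\mathcal{G}_M, v_0)$ of $X(1, M)$ via Algorithm A of Part I \cite{AL14c}. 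Its vertices are states of the $3$-adic transducer computing $Mx$ from $x$ digit by digit, retained only when accumulated output digits lie in $\{0, 1\}$; each vertex has out-degree $1$ (call it \emph{constrained}) or $2$ (\emph{free}).

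The strategy is a Collatz--Wielandt estimate: produce a positive vector $w$ on vertices with $A_{\mathcal{G}_M} w \le \phi \, w$ coordinate-wise. For $j = 1$ I would use the simple assignment $w_v = \phi$ on free vertices and $w_v = 1$ on constrained vertices; invoking the identity $\phi + 1 = \phi^2$, the required inequality at a free vertex reduces to the combinatorial lemma that its two successors are not both free, while at a constrained vertex the bound $w_{v'} \le \phi$ is automatic. This lemma follows from a direct $3$-adic digit calculation: in a free state the next-step freeness condition is a congruence modulo $3$ whose coefficient of $x_n$ equals the unit $b_0 \in \{1, 2\}$, so exactly one of the choices $x_n \in \{0, 1\}$ continues freeness into the next step.

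For $j \ge 2$ the coefficient of $x_n$ in this congruence vanishes modulo $3$, and the naive certificate fails because both successors of a free vertex may themselves be free; this is already visible in the presentation of $X(1, N_2)$, where one can check directly that the Perron eigenvector has graded entries of the form $(\phi^2, \phi, \phi, 1)$ stratified by position in the interleaved structure of Section~\ref{sec34}. I would handle this case by refining to a graded weight assignment $w_v = \phi^{\ell(v)}$, where $\ell(v)$ counts the maximal number of consecutive free steps available from $v$; the carry structure of the transducer bounds $\ell(v)$ by a function of $j$, keeping the scheme finite. The Collatz--Wielandt inequality is then verified along chains of free transitions via $\phi^{k+1} = \phi \cdot \phi^k$ and at entries to constrained vertices via $\phi + 1 = \phi^2$.

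The main obstacle is establishing this uniform Collatz--Wielandt inequality for arbitrary $M$, since the precise structure of free chains in $\mathcal{G}_M$ depends on the entire ternary expansion of $M$, not just the parameter $j$; the bookkeeping for carries when $M$ has many nonzero digits is intricate, and one likely needs to invoke the interleaving machinery of Section~\ref{sec34} together with Proposition~\ref{aprop3} to reduce the general $j \ge 2$ case to the $j = 1$ setup in a piecewise fashion. Once $\beta_M \le \phi$ is established, the Hausdorff dimension bound follows from Proposition~\ref{pr22a}, and combined with $\dim_H \mathcal{C}(1, N_1) = \log_3 \phi$ this yields the stated value $\alpha_2 = \log_3 \phi$.
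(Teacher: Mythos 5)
Your certificate works only in the case you call $j=1$, and there the argument is genuinely correct and complete: writing the states of the transducer as carries $c$, a state is free iff $c\equiv 0 \pmod 3$, and the freeness of the successor under digit $x_n$ is the condition $c+(M-1)x_n\equiv 0 \pmod 9$, whose $x_n$-coefficient is a unit times $3$ exactly when $3\,\|\,M-1$; so at most one successor of a free state is free, and the weights $\phi$ (free) and $1$ (constrained) give $A w\le \phi w$, hence $\beta_M\le\phi$. But for $M\equiv 1 \pmod 9$ the proposal has a genuine gap, and this is precisely the hard regime (it contains $N_k$ and $Q_k$ for $k\ge 2$, where the bound is attained with equality, so any Collatz--Wielandt certificate must be exactly tight). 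Your proposed grading $w_v=\phi^{\ell(v)}$ with $\ell(v)$ the maximal number of consecutive free steps from $v$ is not even well defined: the zero-carry state is free and maps to itself under the digit $0$, so $\ell$ is infinite there (and on any free cycle, e.g.\ the state $(0,0)$ in the presentation of $X(1,N_2)$ -- note your own test vector $(\phi^2,\phi,\phi,1)$ does not agree with $\phi^{\ell(v)}$). The claim that ``the carry structure bounds $\ell(v)$ by a function of $j$'' is therefore false, and no replacement grading is constructed; the fallback appeal to interleaving (Section \ref{sec34}, Proposition \ref{aprop3}) only applies to the special families whose automata literally are interleavings, such as $N_k$ and $Q_k$, not to a general $M\equiv 1\pmod 9$, whose free/constrained chain structure depends on the whole ternary expansion of $M$. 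You correctly flag this as the main obstacle, but it is exactly the content of the theorem that is left unproved.

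For comparison, the paper avoids eigenvector estimates entirely: writing $M=(m_n\cdots m_k 0^{k-1}1)_3$, so that $k$ records the gap between the two lowest nonzero digits, it builds a digit-by-digit injection $f:\mathcal{C}(1,M)\to\mathcal{C}(1,N_k)$ using a classification of digits as restricted/unrestricted and conditionally/unconditionally restricting (the key combinatorial fact being that a $1$ in the image is always followed $k$ digits later by a $0$, and that conditionally restricting and unrestricting digits are exchanged in pairs, giving injectivity). The injection bounds the number of initial blocks of each length of $X(1,M)$ by that of $X(1,N_k)$, hence the path entropy, hence $\dim_H\mathcal{C}(1,M)\le\dim_H\mathcal{C}(1,N_k)=\log_3\phi$ uniformly in $M$. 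If you want to salvage your approach, you would need either a correct finite grading valid for all $M\equiv 1\pmod 9$ (which, given tightness on $N_k$, essentially forces you to reconstruct the comparison with $N_k$ anyway), or to adopt the paper's injection; also note that the stated theorem silently excludes $M=1$ (where $\mathcal{C}(1,1)=\Sigma_3$ has dimension $\log_3 2$), as you observed, and that the conclusion $\alpha_2=\log_3\phi$ additionally uses the reductions for $M\equiv 0,2\pmod 3$ and attainment at, e.g., $\mathcal{C}(1,10)$.
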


\begin{proof}

We may write $M = (m_nm_{n-1} \ldots m_k 0^{k-1} 1)_3$ for some $1 \leq k \leq n < \infty$ since $M$ is an integer, $M \equiv 1 ~(\bmod 3)$. 
Our strategy will be to construct an injective map $f: \mathcal{C}(1,M) \rightarrow \mathcal{C}(1,N_k)$, where recall that $N_k = (10^{k-1}1)_3$, and by \cite[Theorem 1.8]{AL14c}, $\dim_H(\mathcal{C}(1,N_k)) = \log_3(\phi)$. Let $(\mathcal{G},v_0)$ and $(\mathcal{H}_k,w_0)$ be the right-resolving, connected, essential presentations of $\mathcal{C}(1,M)$ and $\mathcal{C}(1,N_k)$, respectively, constructed by Algorithm A of \cite{AL14c}. The injective map 
$f$ induces for each $l$ an injective map from the set of paths of length $l$ in $\mathcal{G}$ originating at $v_0$ to the set of paths of length $l$ in $\mathcal{H}_k$ originating at $w_0$, since there is a bijective correspondence between elements of $\mathcal{C}(1,M)$ or $\mathcal{C}(1,N_k)$ and infinite paths in $\mathcal{G}$ or $\mathcal{H}_k$, respectively, originating at the distinguished vertex. Thus, following \cite[Definition 1.10]{AL14a} and
 \cite[Theorem 1.1]{AL14b}, this will establish the result.

To define the map $f: \mathcal{C}(1,M) \rightarrow \mathcal{C}(1,N_k)$, we will need some notation. Let $\alpha = \ldots a_2 a_1 a_0$ be a generic element of $\mathcal{C}(1,M)$. $\alpha$ corresponds to a vertex path $\ldots v_2 v_1 v_0$ of $\mathcal{G}$ such that there is an edge labeled $a_i$ from vertex $v_i$ to vertex $v_{i+1}$. We call the digit $a_i$ \emph{restricted} if the out-degree of $v_i$ is $1$, and we call $a_i$ \emph{unrestricted} if the out-degree of $v_i$ is $2$. We call $a_i$ \emph{restricting} if $a_{i+k}$ is restricted, and otherwise we call $a_i$ \emph{non-restricting}. 

If the digit $a_i$ of $\alpha$ is unrestricted, then it is possible to find an element \newline $\alpha' = \ldots a_{i+k-1} a_{i+k-2} \ldots a_{i+1}(1-a_i)a_{i-1} \ldots a_2 a_1 a_0 \in \mathcal{C}(1,M)$. That is, changing $a_i$ to $1-a_i$ does not require us to make any other changes until the $i+k$-th digit. Then for all such $\alpha'$ the vertex $v_{i+k}'$ of the corresponding vertex path on $\mathcal{G}$ is the same. If $a_i$ is not only unrestricted but also restricting, then if this vertex $v_{i+k}'$ has out-degree $1$, we call $a_i$ \emph{unconditionally restricting}, and if $v_{i+k}'$ has out-degree $2$, we call $a_i$ \emph{conditionally restricting}. Thus, a conditionally restricting digit can be changed to become unrestricting, while an unconditionally restricting digit remains restricting when changed.

Tautologically, a conditionally restricting digit $a_i$ becomes unrestricting when replaced by $1-a_i$, but we can also see that an unrestricted, unrestricting digit $a_i$ becomes conditionally restricting when replaced by $1-a_i$, since this necessarily changes the carry digit at the $(i+k)$-th step. Thus, these types of digits come in pairs. 

Now we are ready to construct the map $f: \mathcal{C}(1,M) \rightarrow \mathcal{C}(1,N_k)$, digit-by-digit, for $\alpha \in \mathcal{C}(1,M)$:
\begin{equation}
f(\alpha)_i = 
\begin{cases}
0& \text{if } a_i \text{ is restricted or unrestricting}; \\
a_i & \text{if } a_i \text{ is unrestricted and unconditionally restricting}; \\
1 & \text{if } a_i \text{ is unrestricted and conditionally restricting}.\\
\end{cases}
\end{equation}

Though $f(\alpha)$ is clearly an element of $\Sigma_3$, we need to check first that it is really an element of $\mathcal{C}(1,N_k)$. To see this, note that if $f(\alpha)_i = 1$, then $a_i$ was restricting, so $a_{i+k}$ is restricted, thus $f(\alpha)_{i+k} = 0$. So a digit $1$ of $f(\alpha)$ is always followed, $k$ digits later, by a digit $0$. Since $\mathcal{C}(1,N_k)$ can be described as the $\mathbb{Z}/2 \mathbb{Z}$-shift of finite type with forbidden block set $\{10^{k-1}1\}$, and this block does not occur in $f(\alpha)$, we are assured that $f(\alpha) \in \mathcal{C}(1,N_k)$.

It remains only to check that $f$ is injective. Suppose $\alpha = \ldots a_2a_1a_0, \beta = \ldots b_2b_1b_0 \in \mathcal{C}(1,M)$ are distinct. Then there is a $j$ such that $a_j = 1-b_j$ and $a_i = b_i$ for all $0 \leq i < j$. Let $\ldots v_2 v_1 v_0$ and $\ldots w_2 w_1 w_0$ be the vertex paths of $\mathcal{G}$ corresponding to $\alpha$ and $\beta$, respectively. Then we must have $v_i = w_i$ for $0 \leq i \leq j$, and $v_j = w_j$ must have out-degree $2$. Thus, the digits $a_j$ of $\alpha$ and $b_j$ of $\beta$ are unrestricted. But by the discussion above, if $a_j$ is conditionally restricting then $b_j$ is unrestricting, in which case $f(\alpha)_j =1 \neq 0 = f(\beta)_j$, and vice versa, or else $a_j$ and $b_j$ are both unconditionally restricting,
 in which case $f(\alpha)_j = a_j \neq b_j = f(\beta)_j$. In any case, we see that $f(\alpha) \neq f(\beta)$, so $f$ is injective, establishing the result. 
 \end{proof}

%
%
%

\section{  Block number and  intermittency of ternary expansions}\label{sec7}

The examples given so far show that the dependence of $\dim_{H}(\sC(1, M))$ for a positive integer $M$ 
is  complicated function, 
being driven by the structure of the underlying automata, whose construction includes aspects of 
both  number theory and dynamical systems. 
One  may ask whether the Hausdorff dimension might go  to zero as a function of 
some statistic easily computable from the ternary expansion $(M)_3$. Earlier results of this paper show that  the
statistic $d_3(M)$ does not have this property.

We now  present empirical results for  two other interesting statistics of $(M)_3$:
\begin{enumerate}
\item
The {\em block number}  $\bbb_3(M)$ counts the number of blocks of consecutive nonzero
digits in the ternary expansion  $(M)_3$.
\item
The  {\em intermittency} $\sss_3(M)$  counts
 the number of distinct blocks of consecutive
matching digits in the ternary expansion $(M)_3$.
\end{enumerate}

We clearly have $\bbb_3(M) \le \sss_3(M)$. As examples, 
$$
\bbb_3( (2121011)_3) = 2; \quad \bbb_3( (2101)_3) =2,
$$
while
 $$
 \sss_3( (2121011)_3) = 6; \quad \sss_3( (2101)_3) =4.
 $$
The statistic $\bbb_3(M)$ might  be relevant to controlling the Hausdoff dimension since
 blocks of zeros at the end of the number have a simple effect on the associated automaton.
 
 Table 7.1 below presents  data on Hausdorff dimensions for  a few numbers $M$ taking the smallest values for $\sss_3(M)$,
 computed using the algorithm in Part I to six decimal places. The table also provides the number of vertices in the
 associated finite directed graph. \\

%
%

\begin{minipage}{\linewidth}
\begin{center}
\begin{tabular}{|c |c | c| c | c  | c |}
\hline
\mbox{Path Set $C(1,M)$}  & \mbox{$(M)_3$} & \mbox{$\sss_3(M)$} &\mbox{Vertices}
&
{\mbox{Perron eigenvalue}} & {Hausdorff dim}   \\
\hline
$\sC(1,10)$ & $101$ & $3$ & $4$ & $1.618033$ & $0.438018$ \\ 
$\sC(1,16)$ & $121$ & $3$ & $5$ & $1.324718$ & $0.255960$ \\ 
$\sC(1,19)$ & $201$ & $3$ & $8$ & $1.465571$ & $0.347934$ \\
$\sC(1,73)$ & $2201$ & $3$ & $16$ & $1.618033$ & $0.438018$ \\ \hline 
$\sC(1,34)$ & $1021$ & $4$ & $8$ & $1.324718$ & $0.255960$ \\
$\sC(1,46)$ & $1201$ & $4$ & $10$ & $1.112776$ & $0.097266$ \\
$\sC(1,61)$ & $2021$ & $4$ & $14$ & $1.570147$ & $0.410672$ \\
$\sC(1,64)$ & $2101$ & $4$ & $14$ & $1.357193$ & $0.278004$ \\
$\sC(1,70)$ & $2121$ & $4$ & $14$ & $1.360632$ & $0.280308$ \\ \hline
$\sC(1,91)$ & $10101$ & $5$ & $9$ & $1.465571$ & $0.347934$ \\
$\sC(1,97)$ & $10121$ & $5$ & $16$ & $1.380277$ & $0.293356$ \\
$\sC(1,100)$ & $10201$ & $5$ & $17$ & $1.354948$ & $0.276497$ \\
$\sC(1,142)$ & $12021$ & $5$ & $20$ & $1.276393$ & $0.222133$ \\
$\sC(1,145)$ & $12101$ & $5$ & $21$ & $1.000000$ & $0.000000$ \\
$\sC(1,151)$ & $12121$ & $5$ & $20$ & $1.227525$ & $0.186599$ \\
$\sC(1,172)$ & $20101$ & $5$ & $22$ & $1.288329$ & $0.230606$ \\
$\sC(1,178)$ & $20121$ & $5$ & $25$ & $1.345528$ & $0.270148$ \\
$\sC(1,181)$ & $20201$ & $5$ & $22$ & $1.324718$ & $0.255960$ \\
$\sC(1,196)$ & $21021$ & $5$ & $24$ & $1.383785$ & $0.295666$ \\
$\sC(1,208)$ & $21201$ & $5$ & $25$ & $1.290893$ & $0.232415$ \\ \hline
\end{tabular} \par
\bigskip
\hskip 0.5in {\rm TABLE 7.1.} Hausdorff  dimension of $\mathcal{C}(1,M)$ by intermittency 
\newline
\newline
\end{center}
\end{minipage}

This  extremely limited data set  exhibits  a small decrease in Hausdorff dimensions
 as the statistic $\sss_3(M)$ increases. 
It leaves open the possibility that  one might have $\dim_{H}( \mathcal{C}(1, M)) \to 0$
 as $\bbb_3(M) \to \infty$,  noting that $\bbb_3(M) \le \sss_3(M)$.
Further numerical experimentation seems warranted to get a better idea whether such an  assertion might be true.

Regarding potential applicability  of information on these statistics to the Exceptional set conjecture,  
we must point out that  it is not currently known
 whether $\bbb_3(2^n) \to \infty$ holds as $n \to \infty$  or whether $\sss_3(2^n) \to \infty$ holds 
 as $n \to \infty.$   

%
%
%
\section{Appendix A: Review of results for families $L_k= (1^{k})_3$ and $N_k= (10^{k-1}1)_3$. } \label{secA0}

We review two results proved in \cite[Section 4]{AL14c}.
The first is for the family   $L_k=  \frac{1}{2}(3^{k}-1)  = (1^{k})_3$, for $k \ge 1$,
given as \cite[Theorem 5.2]{AL14c}.


\begin{thm}\label{th17a} {\rm (Infinite Family $L_k=\frac{1}{2}(3^k-1)$)}

 (1) Let  $L_k=  \frac{1}{2}(3^{k}-1)  = (1^{k})_3$. The path set presentation $(\sG, v)$ for 
 the path set $X(1, L_k)$ underlying $\mathcal{C}(1, L_k)$
 has exactly $k$ vertices and is strongly connected. 
 
 (2) For every $k \ge 1$, 
\[
\dim_H(\mathcal{C}(1,L_k)) =  \dim_{H} \mathcal{C}(1, (1^k)_3)
= \log_3 \beta_k, 
\]
where $\beta_k $ is the unique real root greater than $1$ of $\lambda^k - \lambda^{k-1}- 1=0$.

(3) For all $k \ge 3$ there holds
\[
\dim_{H} \Big(\mathcal{C}(1,L_k)\Big) = \frac{ \log_3 k}{k} + O\left(\frac{\log\log (k)}{k}\right).
\] 
\end{thm}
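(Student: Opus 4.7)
\medskip
\noindent\textbf{Proof proposal for Theorem~\ref{th17a}.}

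For part (1), I plan to apply Algorithm A of Part~I~\cite{AL14c} directly to $M = L_k = (1^k)_3$ and read off the structure of the right-resolving presentation it produces. The algorithm constructs vertices as carry/residue states arising from long multiplication: one tracks, digit by digit, the ``partial carry'' produced by multiplying a candidate $3$-adic expansion by $L_k$ and demanding the result lies in $\Sigma_3$. Because $2 L_k = 3^k - 1$, the carry after processing each digit is constrained by a mod-$3^k$ relation that naturally has period $k$; concretely I would show by induction on the number of digits consumed that the reachable states are indexed by $\{0,1,\dots,k-1\}$ (essentially ``how many of the last $k$ positions have been filled with a $1$''). Strong connectivity then follows by exhibiting, for each pair of states $(i,j)$, an explicit admissible block of $0$s and $1$s that drives state $i$ to state $j$; the cyclic shift structure coming from $L_k = 1 + 3 + 3^2 + \cdots + 3^{k-1}$ makes such paths easy to write down.

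For part (2), once the presentation $(\mathcal{G}, v_0)$ from part~(1) is in hand, I would write down its $k\times k$ adjacency matrix $\mathbf{A}_k$. I expect the matrix to have a companion-like form in which each state has two outgoing edges (one to the ``reset'' state, one to the ``next'' state in the cycle), producing the characteristic polynomial $\lambda^k - \lambda^{k-1} - 1$. Applying Proposition~\ref{pr22a} gives
\[
\dim_H(\mathcal{C}(1,L_k)) \;=\; \log_3 \beta_k,
\]
where $\beta_k$ is the Perron eigenvalue, i.e.\ the unique real root greater than $1$ of $\lambda^k - \lambda^{k-1} - 1 = 0$. Uniqueness and $\beta_k > 1$ follow from the fact that $f(\lambda) = \lambda^k - \lambda^{k-1} - 1$ satisfies $f(1) = -1 < 0$ and $f'(\lambda) > 0$ for $\lambda > (k-1)/k$.

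For part (3), I would analyze the equation $\beta_k^{k-1}(\beta_k - 1) = 1$ asymptotically. Writing $\beta_k = 1 + \delta_k$ and taking logs gives $(k-1)\log(1+\delta_k) + \log \delta_k = 0$, hence
\[
(k-1)\delta_k \;=\; \log(1/\delta_k) + O(\delta_k^2 (k-1)).
\]
A bootstrap argument (first showing $\delta_k \to 0$, then $\delta_k = O((\log k)/k)$, then refining) yields $\delta_k = \frac{\log k}{k} + O\!\left(\frac{\log\log k}{k}\right)$. Since $\log_3 \beta_k = \frac{\delta_k}{\log 3} + O(\delta_k^2)$, the claimed estimate
\[
\dim_H(\mathcal{C}(1,L_k)) \;=\; \frac{\log_3 k}{k} + O\!\left(\frac{\log\log k}{k}\right)
\]
follows.

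The main obstacle I anticipate is part~(1): carefully identifying which states produced by Algorithm~A are reachable and essential, and extracting the ``cyclic'' structure underlying the $k$-state presentation. Once that combinatorial/arithmetic description is pinned down, part~(2) reduces to a determinant computation and part~(3) is a standard asymptotic bootstrap on a near-$1$ Perron root.
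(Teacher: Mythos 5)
A preliminary remark on the comparison: this paper never proves Theorem~\ref{th17a} itself---it is stated in Appendix A purely as a review of \cite[Theorem 5.2]{AL14c}---so the only ``paper proof'' to measure against is the one in Part I, and your proposal is essentially a reconstruction of that route: run Algorithm A on $M=L_k$, identify a $k$-vertex right-resolving, strongly connected presentation, convert its Perron eigenvalue to Hausdorff dimension via Proposition~\ref{pr22a}, and then do asymptotics on the root of $\lambda^k-\lambda^{k-1}-1$. Parts (2) and (3) of your sketch are correct as written: the monotonicity argument ($f(1)=-1$, $f'>0$ past $(k-1)/k$) gives uniqueness of $\beta_k>1$, and the bootstrap on $\beta_k=1+\delta_k$ from $\beta_k^{k-1}(\beta_k-1)=1$ does yield $\delta_k=\frac{\log k}{k}+O\bigl(\frac{\log\log k}{k}\bigr)$ and hence the stated dimension asymptotics.

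The one genuine slip is in your anticipated structure for part (1). You expect a presentation in which ``each state has two outgoing edges (one to the reset state, one to the next state in the cycle)''; that is impossible, since a strongly connected graph in which every vertex has out-degree $2$ has Perron eigenvalue exactly $2$ (the all-ones vector is a positive eigenvector), while the largest root of $\lambda^k-\lambda^{k-1}-1$ is strictly less than $2$ for $k\ge 2$. Likewise the states are not ``how many of the last $k$ positions carry a $1$'' but rather the carries $0,(1)_3,(11)_3,\dots,(1^{k-1})_3$ produced by Algorithm A, i.e.\ how many steps ago the most recent $1$ was emitted. From carry $0$ one may emit $0$ (self-loop) or $1$ (jump to carry $(1^{k-1})_3$); from any nonzero carry the low digit of the pending sum is already $1$, so emitting a $1$ would create the forbidden digit $2$, and the only admissible edge is labeled $0$, stepping down the cycle back to carry $0$. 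Thus only the initial vertex has out-degree $2$, the graph is a $k$-cycle with a single self-loop (equivalently, $X(1,L_k)$ is the shift in which two $1$'s are separated by at least $k-1$ zeros), strong connectivity is immediate, and the characteristic polynomial is indeed $\lambda^k-\lambda^{k-1}-1$. With that correction your plan is sound and coincides with the Part I argument.
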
  

The Hausdorff dimension $\dim_{H}(\mathcal{C}(1,L_k))$ is positive but approaches
$0$ as $k \to \infty$.  We present data in Table 8.1 below.

%
%
\begin{minipage}{\linewidth}
\begin{center}
\begin{tabular}{|c |r | r | c |r |}
\hline
 \mbox{Path set} & \mbox{$L_k$} & \mbox{Vertices} & \mbox{Perron eigenvalue} &  \mbox{Hausdorff dim}  \\
\hline
 $\mathcal{C}(1,L_1)$ &  1 & 1 & $2.000000$ & $ 0.630929$ \\
$\mathcal{C}(1,L_2)$ &   4 & 2 & $1.618033$ & $ 0.438018$ \\
$\mathcal{C}(1,L_3)$ &   13& 3 & $1.465571$  &   $0.347934$  \\ 
$\mathcal{C}(1,L_4)$ &  40 & 4 & $1.380278$  & $0.293358$  \\ 
$\mathcal{C}(1,L_5)$ &  121 & 5 & $1.324718$  & $0.255960$   \\ 
$\mathcal{C}(1,L_6)$ &  364 &  6 & $1.285199$    & $0.228392$ \\
$\mathcal{C}(1,L_7)$ & 1093 &  7 & $1.255423$ & $0.207052$ \\
$\mathcal{C}(1,L_8)$ &  3280 & 8 & $1.232055$ & $0.189948$ \\ 
$\mathcal{C}(1,L_9)$ &  9841 & 9 & $1.213150$ & $0.175877$ \\ \hline
\end{tabular} \par
\bigskip
\hskip 0.5in {\rm TABLE 8.1.}  Hausdorff dimensions of $\mathcal{C}(1,L_k)$ (to six decimal places)
\newline
\newline
\end{center}
\end{minipage}

We also recall results on  the family  $N_k= 3^{k} + 1 = (10^{k-1}1)_3$,
which consists of numbers with exactly two nonzero ternary digits, with $s_3(N_k)=2$,
given as \cite[Theorem 5.5]{AL14c}.


\begin{thm}\label{th14} {\rm (Infinite Family $N_k=3^k+1$)}

(1) Let $N_k=3^k+1= (10^{k-1}1)_3$. The path set presentation $(\sG, v)$ for 
the path set $X(1, N_k)$ underlying $\mathcal{C}(1, N_k)$
 has exactly $2^k$ vertices and is strongly connected.

(2) For every integer $k \geq 1$, there holds
\[
\dim_H(\mathcal{C}(1,N_k)) =  \dim_{H} \mathcal{C}(1, (10^{k-1}1)_3)
= \log_3\bigg(\frac{1 + \sqrt{5}}{2}\bigg) \approx 0.438018.
\]
\end{thm}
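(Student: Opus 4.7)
The plan is to use the interleaving machinery developed in Section \ref{sec34} to reduce both claims to the base case $k=1$. I would show that $X(1, N_k) = X(1, 4)^{(*k)}$ and then invoke Proposition \ref{aprop1} together with Corollary \ref{acor1}, as already anticipated in Remark \ref{remark38}.

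For the base case, write a generic element of $\Sigma_3$ as $x = \sum_{i \ge 0} a_i 3^i$ with each $a_i \in \{0,1\}$. Since $4x = x + 3x$, the $i$-th $3$-adic digit of $4x$ equals $a_i + a_{i-1}$ (setting $a_{-1} = 0$), and this sum is always in $\{0,1,2\}$, so no carry is ever triggered. Thus $x \in \mathcal{C}(1, 4)$ if and only if no two consecutive digits $a_{i-1}, a_i$ are both $1$, which is the Golden Mean Shift on $\{0,1\}$. It has a right-resolving, strongly connected $2$-vertex presentation whose adjacency matrix has Perron eigenvalue $\phi = \frac{1+\sqrt{5}}{2}$, so Proposition \ref{pr22a} gives $\dim_H \mathcal{C}(1, 4) = \log_3 \phi$.

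The same digit-by-digit computation applied to $N_k = 3^k + 1$ shows that the $i$-th $3$-adic digit of $N_k x$ equals $a_i + a_{i-k}$ (with $a_j = 0$ for $j < 0$), again without carries so long as these sums remain in $\{0,1\}$. Hence $x \in \mathcal{C}(1, N_k)$ if and only if one never has $a_i = a_{i-k} = 1$; equivalently, for each residue class $j \in \{0, 1, \ldots, k-1\}$ the decimated subsequence $(a_j, a_{j+k}, a_{j+2k}, \ldots)$ lies in the Golden Mean Shift $X(1,4)$. By Definition \ref{de341} this is exactly the defining condition for the interleaving $X(1, 4)^{(*k)}$, so $X(1, N_k) = X(1, 4)^{(*k)}$.

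Part (1) then follows from Proposition \ref{aprop1}(2): applying its construction to the $2$-vertex presentation of $X(1, 4)$ produces a presentation of $X(1, N_k)$ with $2^k$ vertices, and by Remark \ref{rem35}(1) strong connectedness is inherited from the base presentation. Part (2) is immediate from Corollary \ref{acor1}, since the interleaving operation preserves the Hausdorff dimension of the corresponding $3$-adic path set fractal, so $\dim_H \mathcal{C}(1, N_k) = \dim_H \mathcal{C}(1, 4) = \log_3 \phi$. The main point requiring care is the no-carry assertion in both directions of the identification $X(1,N_k) = X(1,4)^{(*k)}$; but any carry would arise from a digit-sum of $2$ in $4x$ or in $N_k x$, which is itself a forbidden digit in $\Sigma_3$, so admissibility of $x$ automatically precludes carries and the two characterizations match exactly.
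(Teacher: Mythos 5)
Your proposal is correct and is essentially the argument this paper itself endorses for this result: the identification $X(1,N_k)=X(1,4)^{(*k)}$ combined with Proposition \ref{aprop1}, Remark \ref{rem35} and Corollary \ref{acor1} is exactly the interleaving proof sketched in Remark \ref{remark38}, and you have correctly supplied the base-case computation $\dim_H(\mathcal{C}(1,4))=\log_3\phi$ and the no-carry digit analysis (digit sums are at most $2<3$, so the digits of $N_kx$ are exactly $a_i+a_{i-k}$). The only nitpick is the final sentence's phrasing that "any carry would arise from a digit-sum of $2$" --- in base $3$ a carry requires a sum of at least $3$, which your earlier observation already rules out, so the conclusion stands unchanged.
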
  

Here the Hausdorff dimension is constant as $k \to \infty$.

%
%
%

\section{Appendix B: Relation of families $P_k = (20^{k-1}1)_3$ and $L_{k+1}= (1^{k+1})_3$}\label{sec9}

We observe a relation between the Hausdorff dimensions of $\sC(1, P_k)$ and $\sC(1, L_{k+1})$.
For $1 \leq k \leq 4$, the Hausdorff dimension of $\mathcal{C}(1,(20^{k-1}1)_3)$ equals that of 
$\mathcal{C}(1,(1^{k+1})_3)$.  
 For general $k$  we obtain an inequality.
\begin{thm} \label{thm37}
The Hausdorff dimensions of  $\mathcal{C}(1,P_k)$ and $\mathcal{C}(1,L_{k+1})$ are related by
\begin{equation}\label{hdbound}
\dim_H(\mathcal{C}(1,P_k)) \geq \dim_H(\mathcal{C}(1,L_{k+1})).
\end{equation}
\end{thm}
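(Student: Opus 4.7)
The strategy is to locate inside the directed graph $G_k$ presenting $X(1,P_k)$ a subgraph that is a directed double cover of the graph $G'_{k+1}$ presenting $X(1,L_{k+1})$, and then deduce the spectral inequality $\rho(A_{G_k}) \geq \rho(A_{G'_{k+1}}) = \beta_{k+1}$. By Proposition~\ref{pr22a}, this immediately yields $\dim_H(\mathcal{C}(1,P_k)) \geq \dim_H(\mathcal{C}(1,L_{k+1}))$, since both Hausdorff dimensions equal $\log_3$ of the respective spectral radii.

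I would first recall two elementary spectral facts for nonnegative real matrices. (a) If $H$ is a subgraph of $G$, then $\rho(A_H) \leq \rho(A_G)$: extend a nonnegative Perron eigenvector of $A_H$ by zeros and apply the Collatz--Wielandt characterization (or monotonicity of the spectral radius under entrywise increase). (b) If $\pi \colon G \to H$ is a covering of directed graphs, then lifting any eigenvector $v$ of $A_H$ with eigenvalue $\mu$ to $\tilde v := v \circ \pi$ on $V(G)$ produces an eigenvector of $A_G$ with the same eigenvalue, because the out-neighborhood of each vertex $x \in V(G)$ maps bijectively onto the out-neighborhood of $\pi(x)$. Hence $\rho(A_G) \geq \rho(A_H)$. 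Neither fact depends on edge labels.

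Next I would extract the subgraph of interest. By Proposition~\ref{pr49}(3), the underlying directed graph of $G_k$ admits the fixed-point-free involution $R(m) = 3^k - m$, and by Proposition~\ref{pr410a} the depth-$0$ strongly connected component of $G_k$ has exactly $2(k+1)$ vertices. I would verify that this component, call it $G_k^{(0)}$, is $R$-invariant, using that $R$ preserves depth and swaps $(0)_3$ with $(10^k)_3$, both of depth $0$. Thus $G_k^{(0)}$ is a connected directed double cover of its quotient $H_k^{(0)} := G_k^{(0)}/R$, which has $k+1$ vertices. The core combinatorial step is then to identify $H_k^{(0)}$ with the Algorithm~A presentation $G'_{k+1}$ of $X(1,L_{k+1})$. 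Both have $k+1$ vertices, and I would match them by exhibiting an explicit isomorphism: the $(2k{+}2)$-cycle through $v_0$ in $G_k^{(0)}$ constructed in the proof of Proposition~\ref{pr410a}(3) at depth $0$ quotients under $R$ to a $(k{+}1)$-cycle, while the remaining edges at $(0)_3$ and $(10^k)_3$ quotient to precisely the additional edge(s) of $G'_{k+1}$ needed to realize the characteristic polynomial $\lambda^{k+1} - \lambda^k - 1$ with Perron root $\beta_{k+1}$ (cf.\ Theorem~\ref{th17a}). Combining all three facts yields
\[
\rho(A_{G_k}) \;\geq\; \rho(A_{G_k^{(0)}}) \;\geq\; \rho(A_{H_k^{(0)}}) \;=\; \rho(A_{G'_{k+1}}) \;=\; \beta_{k+1},
\]
from which \eqref{hdbound} follows by applying $\log_3$.

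The main obstacle is the explicit verification that $H_k^{(0)} \cong G'_{k+1}$ as directed graphs. This requires simultaneously describing the Algorithm~A output on $L_{k+1} = (1^{k+1})_3$ and on the depth-$0$ component of $P_k = (20^{k-1}1)_3$, and checking that the $R$-identification of the latter produces exactly the former. The vertex-label types (S1)--(S2) of Definition~\ref{de40}, combined with the reachability construction in Proposition~\ref{pr49}(G5), provide the bookkeeping needed to pair vertices and track edges; once the isomorphism is established, the spectral chain above is immediate, and the theorem follows. A consistency check is provided by the cases $1 \le k \le 4$ recorded in Table~2.1, where Perron eigenvalues match those of $L_{k+1}$ in Table~8.1 exactly, indicating equality in \eqref{hdbound}; for $k \geq 5$ the inequality can be strict, driven by deeper strongly connected components, as discussed in Remark~\ref{rem47}.
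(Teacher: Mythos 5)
Your proposal is correct and follows essentially the same route as the paper: the paper's proof also isolates the depth-$0$ subgraph of $G_k$ (the explicit $(2k+2)$-cycle through $(0)_3$ and $(10^k)_3$ with self-loops at those two vertices), observes that it is a double cover of the Algorithm~A presentation of $X(1,L_{k+1})$, and concludes by spectral monotonicity. Your added explicit spectral lemmas (subgraph monotonicity and eigenvector lifting along the cover, via the quotient by $R$) just make precise what the paper leaves to inspection.
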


\begin{proof} 
The marked vertex $v_0$ with label $(0)_3$   of the
path set  presentation $\mathcal{G}_{(20^{k-1}1)_3}$ associated to $\mathcal{C}(1,(20^{k-1}1)_3)$
has two exit edges, one a self-loop with edge labeled $0$,
the second an exit edge labeled $1$ to the vertex labeled $(20^{k-1})_3$. From this vertex, there is an edge labeled $1$
 to the vertex labeled $(220^{k-2})_3$. This continues for $k-2$ more steps into a vertex labeled $(2^{k})_3$, from
  which there is an out-edge labeled $1$ to a vertex labeled $(10^{k})_3$. There is a self-loop labeled $1$ at the $(10^k)_3$-vertex, 
 and a path of length $k+1$ 
  through vertices $(10^{k-j})_3$, for $1 \le j \le k$, all with edge label $0$, then 
   back to the $0$-vertex.  
   Considering only the edges given above, this  comprises a subgraph $H$
 of $\mathcal{G}_{(20^{k-1}1)_3}$ having $2k+2$ edges
 that is strongly connected, and  consists of a closed path starting and ending at $0$
 of length $2k+2$ plus two self-loops, at vertices $m=0$ and $m=3^k$. 
 (The case $k=2$ is pictured in Example \ref{example33}, where the subgraph of $\mathcal{G}_{(201)_3}$ under consideration
 is the six outer vertices in the graph in Figure \ref{fig41}.)
 Upon inspection we see that the graph $H$ is a
  double-covering of the graph  $\mathcal{G}_{(1^{k+1})_3}$ associated to $\mathcal{C}(1,L_{k+1})$ given by Algorithm A of \cite{AL14c}. This 
  implies the bound \eqref{hdbound}.  
\end{proof}

\begin{rem}
For $1 \le k \le 4$,
equality holds in Proposition \ref{thm37}
 because the subgraph of $\mathcal{G}_{(20^{k-1}1)_3}$
constructed in the proof is the strongly connected component with greatest topological entropy in these cases.
This is not true for almost all larger $k$. Theorem \ref{th17a} says $\dim_{H}(\mathcal{C}(1, L_{k})) \to 0$ as $n \to \infty$.
On the other hand  Theorem \ref{th109c} says that $\dim_{H}(\mathcal{C}(1, L_{k}))$ is bounded away from $0$
as $k \to \infty$.
\end{rem}

%
%
%

\end{document}